\renewcommand{\b}[1]{\mathbb{#1}}
\newcommand{\m}[1]{\mathbf{#1}}
\renewcommand{\c}[1]{\mathcal{#1}}
\newcommand{\innpr}[2]{{\left\langle {#1},{#2}\right\rangle}}
\newcommand{\rd}{\mathrm{d}}
\newcommand{\indep}{\perp\mkern-9.5mu\perp}
\DeclareMathOperator*{\argmax}{\arg\max}
\DeclareMathOperator{\tr}{tr}
\DeclareMathOperator{\KL}{KL}
\newcommand{\vertiii}[1]{{\left\vert\kern-0.25ex\left\vert\kern-0.25ex\left\vert #1 
    \right\vert\kern-0.25ex\right\vert\kern-0.25ex\right\vert}} 
\newcommand{\vertii}[1]{{\vert\kern-0.25ex\vert\kern-0.25ex\vert #1 
    \vert\kern-0.25ex\vert\kern-0.25ex\vert}} 
\theoremstyle{plain}
\newtheorem{theorem}{Theorem}[section]
\newtheorem{lemma}[theorem]{Lemma}
\newtheorem{corollary}[theorem]{Corollary}
\newtheorem{proposition}[theorem]{Proposition}
\newtheorem{definition}{Definition}[section]
\newtheorem{example}{Example}[section]
\newtheorem{conjecture}{Conjecture}[section]
\newtheorem{remark}{Remark}[section]
\begin{document}

\pagestyle{fancy}
\fancyhead{}
\renewcommand{\headrulewidth}{0pt} 
\fancyhead[CE]{Ho Yun}
\fancyhead[CO]{Spectral Shrinkage of Gaussian Entropic Optimal Transport}

\begin{frontmatter}
\title{{\large Spectral Shrinkage of Gaussian Entropic Optimal Transport}}

\begin{aug}
\author[A]{\fnms{Ho} \snm{Yun}\ead[label=e1]{ho.yun@epfl.ch}}

\thankstext{t1}{Research supported by a Swiss National Science Foundation grant awarded to Victor M. Panaretos.}
\address[A]{Ecole Polytechnique F\'ed\'erale de Lausanne, \printead{e1}}
\end{aug}

\begin{abstract}
We present a functional calculus treatment of Entropic Optimal Transport (EOT) between Gaussian measures on separable Hilbert spaces, providing a unified framework that handles infinite-dimensional degeneracy. By leveraging the notion of proper alignment and the Schur complement, we reveal that the Gaussian EOT solution operates as a precise \textit{spectral shrinkage}: the optimal coupling is uniquely determined by contracting the spectrum of the correlation operator via a universal scalar function. This geometric insight facilitates an algorithmic shift from iterative fixed-point schemes (e.g., Sinkhorn) to direct algebraic computation, enabling efficient multi-scale analysis, where a single spectral decomposition allows for the exact evaluation of entropic costs across arbitrary regularization parameters $\varepsilon > 0$ at negligible additional cost. Furthermore, we investigate the asymptotic behavior as $\varepsilon \downarrow 0$ in settings where the unregularized Optimal Transport problem admits non-unique solutions. We establish a selection principle that the regularized limit converges to the most diffusive optimal coupling --characterized as the centroid of the convex set of optimal Kantorovich plans. This demonstrates that in degenerate regimes, the entropic limit systematically rejects deterministic Monge solutions (extremal points) in favor of the optimal solution with minimal Hilbert-Schmidt correlation, effectively filtering out spurious correlations in the null space. Finally, we derive stability bounds and convergence rates, recovering established parametric rates ($\varepsilon \log(1/\varepsilon)$) in finite dimensions while identifying distinct non-parametric rates dependent on spectral decay in infinite-dimensional settings.
\end{abstract}

\begin{keyword}[class=AMS]
\kwd[Primary ]{49Q22}
\kwd[; secondary ]{60B11, 47B10, 46N30}
\end{keyword}

\begin{keyword}
\kwd{Entropic Optimal Transport, Gaussian Measures, Bures-Wasserstein Geometry, Covariance Operators, Spectral Shrinkage, Functional Analysis}
\end{keyword}
\end{frontmatter}

\maketitle

\section{Introduction}
The Entropic Optimal Transport (EOT) problem has emerged as a computational and theoretical cornerstone in modern data science, bridging the gap between optimal transport (OT) theory and efficient numerical algorithms via the Schr\"{o}dinger bridge framework \cite{leonard2012schrodinger, chen2021optimal, peyre2019computational}. Within this landscape, the Gaussian setting serves as a canonical prototype, offering one of the few regimes where the EOT problem admits closed-form algebraic solutions \cite{janati2020entropic}. However, the geometric structure of these solutions is often obscured by the dominant analytical approach: matrix differential calculus \cite{janati2020entropic, mallasto2022entropy, minh2023entropic, bunne2023schrodinger, freulon2025entropic}. By treating covariance matrices as generic variables to be differentiated, standard derivations typically yield complex Riccati-type algebraic equations that hide the intrinsic geometry of the problem.

We investigate the Gaussian EOT problem between two arbitrary centered Gaussian measures $\mu = N(0, \m{A})$ and $\nu = N(0, \m{B})$ on a separable Hilbert space $\c{H}$, where the covariances $\m{A}, \m{B}$ are non-negative definite (n.n.d.) and trace-class \cite{da2006introduction}. We assume centered measures without loss of generality, as the mean vectors contribute only a trivial affine shift separable from the covariance transport. In contrast to the literature of Gaussian EOT, which often imposes non-degeneracy assumptions (i.e., $\c{N}(\m{A})=\{0\}$) to ensure the differentiability of the objective function, our framework imposes no nullity conditions nor inclusion assumptions (e.g., $\c{N}(\m{A}) \subseteq \c{N}(\m{B}))$. Instead, we propose a coordinate-free geometric formulation based on properly aligned square roots. This perspective trivializes the algebraic complexity, replacing opaque matrix equations with explicit spectral operations.

\paragraph{Proper Alignment and Bures-Wasserstein Geometry} 
The core difficulty in comparing two covariance operators $\m{A}$ and $\m{B}$ lies in the ambiguity of their square roots, which we call the Green's operators $\m{G}, \m{M}$ satisfying $\m{G} \m{G}^{*} = \m{A}$ and $\m{M} \m{M}^{*} = \m{B}$. While $\m{A}^{1/2}$ is undoubtedly the canonical choice for a single operator, the geometry of the 2-Wasserstein distance suggests that $\m{A}$ and $\m{B}$ must be treated interactively. Recall that the 2-Wasserstein distance is given by \cite{gelbrich1990formula, bhatia2019bures, takatsu2011wasserstein, panaretos2020invitation}:
\begin{align*}
    \c{W}_{2}^{2}(\m{A}, \m{B}) = \tr [\m{A}] + \tr [\m{B}] - 2 \tr [(\m{A}^{1/2} \m{B} \m{A}^{1/2})^{1/2}] \le \vertii{\m{A}^{1/2} - \m{B}^{1/2}}_{2}^{2},
\end{align*}
where $\vertii{\cdot}_{2}$ denotes the Hilbert-Schmidt norm. The equality holds if and only if $\m{A}^{1/2} \m{B}^{1/2} \succeq \m{0}$, a condition that fails in general unless the operators commute. This discrepancy motivates our notion of \textit{proper alignment}: among the class of Green's operators, we select the pair satisfying $\m{G}^{*} \m{M} \succeq \m{0}$. This choice of pair recovers the 2-Wasserstein distance as a Hilbert-Schmidt distance $\c{W}_{2}(\m{A}, \m{B}) = \vertii{\m{G} - \m{M}}_{2}$, and renders the analysis robust to degeneracy—a regime where classical Kantorovich duality or density-based arguments often become intractable due to the failure of Brenier's theorem \cite{villani2008optimal, villani2021topics, yun2025gaussian}. From the perspective of shape analysis \cite{dryden2016statistical, pigoli2014distances, masarotto2019procrustes}, the Green's operator can be viewed as a preshape lying in the fibre of the covariance. The 2-Wasserstein distance then corresponds naturally to the Procrustes distance: it is the infimum of the Hilbert-Schmidt distance over the unitary orbits of $\m{G}$ and $\m{M}$. Nonetheless, while the operator product $\m{G} \m{M}^{*}$ may vary with the specific choice of the aligned pair, the spectrum of $\m{G}^{*} \m{M}$ remains invariant (coinciding with that of $(\m{A}^{1/2} \m{B} \m{A}^{1/2})^{1/2}$). Consequently, any optimal Kantorovich coupling admits a covariance of the form:
\begin{align*}
    \m{\Sigma} = \left(
    \begin{array}{c|c}
    \m{A} & \m{G} \m{M}^* \\
    \hline
    \m{M} \m{G}^* & \m{B}
    \end{array}
    \right).
\end{align*}
Uniqueness is guaranteed if and only if the generalized Schur complement (denoted $\m{B}/\m{A}$ or $\m{A}/\m{B}$ in our coordinate-free approach) vanishes, implying no residual information in one marginal remains unexplained by the other at least in one direction \cite{yun2025gaussian}.

\paragraph{Spectral Characterization of Entropic Solutions} Leveraging this alignment, we characterize the entire class of joint Gaussian couplings equivalent to the independent coupling $\mu \otimes \nu$ through the Felman-H\'{a}jek dichotomy. We show that minimizing the entropic cost is equivalent to maximizing a quadratic \emph{entropic interaction} term with the Fredholm determinant:
\begin{align*}
    \c{P}_{\varepsilon}(\m{R}) = 2 \tr [\m{R} (\m{G}^{*} \m{M})] + \varepsilon \log \det(\m{I} - \m{R}^* \m{R}), \quad \varepsilon > 0,
\end{align*}
where $\m{R}$ is the Green's correlation operator with respect to $(\m{G}, \m{M})$. Unlike the canonical non-interactive pair $(\m{A}^{1/2}, \m{B}^{1/2})$, the properly aligned pair decouples this optimization spectrally, admitting a closed-form solution $\m{R}_{\varepsilon} = f_{\varepsilon} (\m{G}^{*} \m{M})$. Here, $f_{\varepsilon}(x) = 2x/(\sqrt{4x^{2}+\varepsilon^{2}}+\varepsilon) \in [0, 1)$ is a universal \textit{spectral shrinkage function} that monotonically shrinks the correlation eigenvalues as regularization $\varepsilon$ increases. This result demystifies the Gaussian EOT solution: it is simply a spectral filter applied to the optimal Kantorovich coupling, yielding the unique solution invariant to the specific choice of properly aligned pair:
\begin{align*}
    \pi_{\varepsilon} = N(\m{0}, \m{\Sigma}_{\varepsilon}) \in \Pi(\mu, \nu), \quad \m{\Sigma}_{\varepsilon} = \left(
    \begin{array}{c|c}
    \m{A} & \m{G} \m{R}_{\varepsilon} \m{M}^* \\
    \hline
    \m{M} \m{R}_{\varepsilon} \m{G}^* & \m{B}
    \end{array}
    \right).
\end{align*}

Beyond theoretical clarity, this framework enables a fundamental algorithmic shift. While standard Gaussian EOT solvers rely on Sinkhorn iterations \cite{janati2020entropic, mallasto2022entropy}, which are iterative fixed-point schemes, our approach reduces the problem to a direct algebraic computation. By recognizing that the cost is determined purely by the singular spectrum of $\m{G}^{*} \m{M}$, the exact solution can be recovered using standard linear algebra routines (e.g., SVD) in finite time, eliminating iterative approximation errors. This advantage is particularly pronounced in multi-scale simulations: while the Sinkhorn-based algorithm reiteraterate for each $\varepsilon > 0$ where the required iterations typically diverge as $\varepsilon \downarrow 0$ \cite{cuturi2013sinkhorn, ghosal2025convergence, chizat2020faster}, in our approach once the eigendecomposition of $\m{G}^{*} \m{M}$ is computed for any properly aligned pair—a single $\c{O}(d^{3})$ operation when $\c{H} = \b{R}^{d}$—the solution for any subsequent $\varepsilon > 0$ can be evaluated in $\c{O}(d)$ time.

\paragraph{Limiting Behavior and Diffusivity} 
Our framework elucidates the convergence behavior as
$\varepsilon \to 0$. The sequence of spectral shrinkage functions $\{f_{\varepsilon}\}_{\varepsilon > 0}$ converges pointwise to the indicator function $f_{\varepsilon}(x) \to f_{0}(x) := I(x > 0)$, but fails to converge uniformly. This discontinuity at zero explains the emergence of non-unique optimal Kanotorovich solutions in degenerate regimes: the behavior on the null space of $\m{G}^{*} \m{M}$ becomes ill-posed. Crucially, we establish that the EOT coupling limit $\pi_{0} = \lim_{\varepsilon \to 0} \pi_{\varepsilon}$ with respect to the 2-Wasserstein distance is the \emph{most diffusive} Gaussian solution among all possible optimal Kantorovich couplings. Specifically, $\pi_{0}$ corresponds to the centroid of the convex set of optimal couplings, constructed via the orthogonal projection $\m{R}_{0} = f_{0}(\m{G}^{*} \m{M})$ onto the closure of the range of $\m{G}^{*} \m{M}$.
This implies that, whenever the Schur complement does not vanish, the optimal Kantorovich solution is non-unique, among which the limit of EOT coupling is never derived from a Monge solution (extremal point of the solution set) \cite{leonard2012schrodinger, yun2025gaussian}:
\begin{align*}
    \pi_{0} = N(\m{0}, \m{\Sigma}_{0}) \in \Pi(\mu, \nu), \quad \m{\Sigma}_{0} = \left(
    \begin{array}{c|c}
    \m{A} & \m{G} \m{R}_{0} \m{M}^* \\
    \hline
    \m{M} \m{R}_{0} \m{G}^* & \m{B}
    \end{array}
    \right).
\end{align*}

Finally, owing to the explicit form of $\m{R}_{\varepsilon} = f_{\varepsilon} (\m{G}^{*} \m{M})$, we provide sharp stability bounds for the entropic bias $\c{W}_{2, \varepsilon} (\m{A}, \m{B})^{2} - \c{W}_{2} (\m{A}, \m{B})^{2}$. In the Euclidean setting $\c{H} = \b{R}^{d}$, we recover the convergence rate of $\varepsilon \log(1/\varepsilon)$, a well-established rate in various setups under certain regularity conditions \cite{leonard2012schrodinger, carlier2017convergence, nutz2022entropic, carlier2023convergence, eckstein2022quantitative, ghosal2022stability, janati2020entropic, goldfeld2020gaussian}. However, in infinite-dimensional settings, we show that the rate depends on the spectral decay of $(\m{A}^{1/2} \m{B} \m{A}^{1/2})^{1/2}$, exhibiting slower rates. For instance, we show that the sharpest rate for $n$-th and $m$-th integrated Brownian motions is given by $\varepsilon^{1-1/(2 \lceil (n+m)/2 \rceil)}$.
Furthermore, we extract a closed-form formula for the squared Wasserstein distance $\c{W}_{2}^{2}(\pi_{\varepsilon}, \pi_{0})$ between EOT coupling and OT couplings. In finite dimensions $\c{H} = \b{R}^{d}$, this distance typically decays at the parametric rate $\c{O}(\varepsilon)$, whereas infinite-dimensional settings may again exhibit slower, non-parametric rates.

\section{Preliminaries}\label{sec:prelim}

\subsection{Notations}
Let $\c{H}$ denote a real Hilbert space, which is assumed to be separable throughout this work. For any closed subspace $\c{V} \subset \c{H}$, we denote the orthogonal projection onto $\c{V}$ by $\m{\Pi}_{\c{V}}$. The symbol $^{\dagger}$ denotes the Moore-Penrose pseudoinverse. For an operator $\m{T}$, we denote its null space and its range by $\c{N}(\m{T})$ and $\c{R}(\m{T})$, respectively. For a densely defined closable operator $\m{T}$, $\overline{\m{T}}$ denotes its closure \cite{conway2019course}.

The Banach space of bounded operators on $\c{H}$ is denoted by $\c{B}_{\infty}(\c{H})$ and is equipped with the operator norm $\vertiii{\cdot}_{\infty}$. 
We denote by $\c{B}_{0}(\c{H})$ the Banach space of compact operators. For $1 \le p < \infty$, the Banach space of $p$-Schatten class operators on $\c{H}$ is defined as:
\begin{equation*}
    \c{B}_{p}(\c{H}) := \left\{ \m{K} \in \c{B}_{0}(\c{H}) : \vertiii{\m{K}}_{p} := \left( \sum_{l=1}^{\infty} \innpr{e_{l}}{(\m{K}^{*} \m{K})^{p/2} e_{l}} \right)^{1/p} < +\infty \right\}, 
\end{equation*}
where $\{e_{l}\}_{l=1}^{\infty}$ is any orthonormal basis (ONB) of $\c{H}$. We denote by $\c{B}_{\infty}^{+}(\c{H})$ the Banach space of non-negative definite (n.n.d.) bounded operators, and similarly define $\c{B}_{0}^{+}(\c{H})$ and $\c{B}_{p}^{+}(\c{H})$ for $p \in [1, \infty)$. For $p=2$, the class of $2$-Schatten operators is referred to as the Hilbert–Schmidt operators. For $p=1$, the class consists of the trace-class operators. For $\m{K} \in \c{B}_{1}(\c{H})$, the trace is defined as
\begin{equation*}
    \tr (\m{K}) := \sum_{l=1}^{\infty} \innpr{e_{l}}{\m{K} e_{l}},
\end{equation*}
which coincides with the trace norm $\vertiii{\cdot}_{1}$ if and only if $\m{K} \in \c{B}_{1}^{+}(\c{H})$.

For the analysis of EOT, we utilize the functional calculus for self-adjoint compact operators \cite{hall2013quantum}. Let $\m{A} \in \c{B}_{0}(\c{H})$ be a self-adjoint compact operator. By the spectral theorem, $\m{A}$ admits a spectral decomposition into distinct eigenvalues $\sigma(\m{A}) = \{\lambda_{k}\}_{k \in \c{I}} \subset \b{R}$ and associated orthogonal projections $\{\m{\Pi}_{k}\}_{k \in \c{I}}$ onto the eigenspaces $\c{N}(\m{A} - \lambda_{k} \m{I})$. The index set $\c{I}$ is a countable subset of $\b{N}$. To explicitly handle infinite-dimensional degeneracy, we adopt the following convention:
\begin{enumerate}[leftmargin = *]
\item \textbf{Injective Case:} If $\c{N}(\m{A}) = \{0\}$, we set $\c{I} = \b{N}$ and order the eigenvalues strictly by magnitude: $|\lambda_{1}| > |\lambda_{2}| > \cdots > 0$.
\item \textbf{Degenerate Case:} If $\c{N}(\m{A}) \neq \{0\}$, we include the zero eigenvalue. We set $\lambda_{1} = 0$, implying $\m{\Pi}_{1} = \m{\Pi}_{\c{N}(\m{A})}$, and order the remaining non-zero eigenvalues as $|\lambda_{2}| > |\lambda_{3}| > \cdots > 0$.
\end{enumerate}
In both cases, the projections form a resolution of the identity, $\sum_{k \in \c{I}} \m{\Pi}_{k} = \m{I}_{\c{H}}$, where the series converges in the Strong Operator Topology (SOT).
For any bounded Borel function $f: \sigma(\m{A}) \to \b{R}$, the functional calculus is defined by the series:
\begin{equation*}
    f(\m{A}) 
    := \sum_{k \in \c{I}} f(\lambda_{k}) \m{\Pi}_{k}.
\end{equation*}
Since the projections are mutually orthogonal, this series always converges in the SOT; that is, $f(\m{A})v = \sum_{k \in \c{I}} f(\lambda_{k}) (\m{\Pi}_{k} v)$ for all $v \in \c{H}$. The resulting operator $f(\m{A})$ is bounded and self-adjoint. Furthermore, if $f$ satisfies the condition $f(0)=0$ and is continuous at $0$, then the coefficients $f(\lambda_k)$ decay to $0$, ensuring that $f(\m{A})$ is compact ($\in \c{B}_{0}(\c{H})$) and the series converges in the operator norm topology $\vertiii{\cdot}_{\infty}$. 
Finally, we recall the following convergence properties: given a sequence of uniformly bounded functions $\{f_{n}: \sigma(\m{A}) \to \b{R}\}$ that converges pointwise to $f$, then the operator sequence converges in SOT, $f_{n}(\m{A}) \xrightarrow{SOT} f(\m{A})$. If the scalar convergence $f_{n} \to f$ is uniform on the spectrum, the operator convergence holds in the norm topology, $f_{n}(\m{A}) \xrightarrow{\vertii{\cdot}_{\infty}} f(\m{A})$.

\subsection{Review of Gaussian OT}\label{ssec:rev:OT}
We briefly review key results from our recent work \cite{yun2025gaussian} that are essential for this paper. Any centered Gaussian measure $\mu = N(\m{0}, \m{A})$ is fully determined by its trace-class covariance $\m{A} \in \c{B}_{1}^{+}(\c{H})$ \cite{da2006introduction}.

\begin{definition}[Partial Isometry]
An operator $\m{U} \in \c{B}_{\infty}(\c{H})$ is called a \textbf{partial isometry} if $\|\m{U} h \| = \|h \|$ for any $h \in \c{N}(\m{U})^{\perp}$. The subspaces $\c{N}(\m{U})^{\perp}$ and $\c{R}(\m{U})$ are called the \emph{initial space} and the \emph{final space}, respectively.
\end{definition}

The final space of a partial isometry $\m{U}$ is closed. The operators $\m{U}^{*} \m{U}$ and $\m{U} \m{U}^{*}$ are the orthogonal projections onto the initial and final spaces, respectively. Furthermore, the adjoint $\m{U}^{*} = \m{U}^{\dagger}$ is also a partial isometry with the initial and final spaces swapped \cite{conway2019course}.

\begin{definition}[Green's Operator]\label{def:Green's}
Let $\m{A} \in \c{B}_{1}^{+}(\c{H})$. An operator $\m{G} \in \c{B}_{2}(\c{H})$ satisfying $\m{G} \m{G}^{*} = \m{A}$ is called a \textbf{Green's operator} for $\m{A}$. We denote the set of such operators by $\c{G}(\m{A})$.
\end{definition}
For any Green's operator $\m{G} \in \c{G}(\m{A})$, $\m{U}_{\m{G}} = \m{A}^{\dagger/2} \m{G}$ is the partial isometry with the initial space $\c{N}(\m{U}_{\m{G}})^{\perp} = \c{N}(\m{G})^{\perp} = \overline{\c{R}(\m{G}^{*})}$ and the final space $\c{R}(\m{U}_{\m{G}}) = \overline{\c{R}(\m{G})} = \overline{\c{R}(\m{A})}$.
Consider the orthogonal decomposition of $\c{H}$ induced by $\m{A}$:
\begin{equation*}
    \c{H} = \c{H}_{1} \oplus \c{H}_{2}, \quad \c{H}_{1} := \overline{\c{R}(\m{A})}, \quad \c{H}_{2} := \c{N}(\m{A}).
\end{equation*}
We remark that the operator $\m{B}^{1/2} \m{\Pi}_{\c{N}(\m{A}^{1/2} \m{B}^{1/2})} \m{B}^{1/2}$ vanishes on $\c{H}_{1}$, and $\c{H}_{2}$ is the invariant subspace.
\begin{definition}[Schur Operator]
Let $\m{A}, \m{B} \in \c{B}_{1}^{+}(\c{H})$. We define the $\m{A}$-\textbf{Schur operator} of $\m{B}$ as
\begin{align*}
    \m{B}/\m{A} := [\m{B}^{1/2} \m{\Pi}_{\c{N}(\m{A}^{1/2} \m{B}^{1/2})} \m{B}^{1/2}] \big|_{\c{H}_{2}} : \c{H}_{2} \to \c{H}_{2}.
\end{align*}
\end{definition}

To interpret the Schur operator, express $\m{A}$ and $\m{B}$ as block operators:
\begin{align*}
    \m{A} =
    \begin{pmatrix}
    \m{A}_{{11}} & \m{0} \\
    \m{0} & \m{0}
    \end{pmatrix},
    \quad
    \m{B} =
    \begin{pmatrix}
    \m{B}_{{11}} & \m{B}_{{12}} \\
    \m{B}_{{21}} & \m{B}_{{22}}
    \end{pmatrix}
        :
    \begin{array}{c}
    \c{H}_{1} \\
    \oplus \\
    \c{H}_{2}
    \end{array}
    \rightarrow
    \begin{array}{c}
    \c{H}_{1} \\
    \oplus \\
    \c{H}_{2}
    \end{array},
\end{align*}
where $\m{A}_{{11}} := \m{A} \big|_{\c{H}_{1}}$ is injective, and the block components of $\m{B}$ are given by $\m{B}_{{ij}} := \m{\Pi}_{\c{H}_{i}} \m{B} \big|_{\c{H}_{j}}: \c{H}_{j} \to \c{H}_{i}$ for $i, j \in \{1, 2\}$. It can be shown that $\m{B} / \m{A} = \m{B}_{{22}} - (\m{B}_{{11}}^{\dagger/2} \m{B}_{{12}})^{*} (\m{B}_{{11}}^{\dagger/2} \m{B}_{{12}}): \c{H}_{2} \to \c{H}_{2}$. The following conditions are equivalent:
\begin{align*}
    \m{B}/\m{A} = \m{0} \quad \Longleftrightarrow \quad \c{N} (\m{B}^{1/2}) = \c{N}(\m{A}^{1/2} \m{B}^{1/2}) \quad \Longleftrightarrow \quad \c{R}(\m{B}^{1/2}) \cap \c{N}(\m{A}^{1/2}) = \{ \m{0} \}.
\end{align*}

\begin{definition}[Proper Alignment]
Let $\m{A}, \m{B} \in \c{B}_{1}^{+}(\c{H})$. We say that $\m{G} \in \c{G}(\m{A})$ and $\m{M} \in \c{G}(\m{B})$ are \textbf{properly aligned} if $\m{G}^{*} \m{M} \in \c{B}_{1}^{+}(\c{H})$.
\end{definition}

By von Neumann's trace inequality and the polar factorization \cite{conway2019course}, we have
\begin{align}\label{eq:W2:bdd:HS}
    \tr [\m{G}^{*} \m{M}] \le \tr [(\m{G}^{*} \m{B} \m{G})^{1/2}] = \tr [(\m{A}^{1/2} \m{B} \m{A}^{1/2})^{1/2}] \quad \Longleftrightarrow \quad \c{W}_{2}^{2}(\m{A}, \m{B}) \le \vertii{\m{G} - \m{M}}_{2}^{2},
\end{align}
and the equality holds if and only if $\m{G}$ and $\m{M}$ are properly aligned. In this case, the eigenvalues of $\m{G}^{*} \m{M} = \m{G}^{*} \m{M} \in \c{B}_{1}^{+}(\c{H})$ coincides with that of  $(\m{A}^{1/2} \m{B} \m{A}^{1/2})^{1/2}  \in \c{B}_{1}^{+}(\c{H})$.
We also remark that if $\m{A}$ and $\m{B}$ commute, then $\m{A}^{1/2}$ and $\m{B}^{1/2}$ are properly aligned. Additionally, if $\m{A} \m{B} = \m{0}$, then $\m{G}^{*} \m{M} = \m{0}$ for any $\m{G} \in \c{G}(\m{A})$ and $\m{M} \in \c{G}(\m{B})$; hence, they are properly aligned. 
A coupling of $N(\m{0}, \m{A})$ and $N(\m{0}, \m{B})$ is optimal for the Kantorovich problem if and only if its covariance operator admits the form
\begin{align}\label{eq:Kanto:block}
    \left(
    \begin{array}{c|c}
    \m{A} & \m{G} \m{M}^{*} \\
    \hline
    \m{M} \m{G}^{*} & \m{B}
    \end{array}
    \right) \in \c{B}_{1}^{+}(\c{H} \times \c{H}),
\end{align}
for some properly aligned Green's operators $\m{G} \in \c{G}(\m{A})$ and $\m{M} \in \c{G}(\m{B})$. Throughout this work, we employ grid lines in block operator representations on $\c{H} \times \c{H}$, to visually distinguish from the decomposition $\c{H} = \c{H}_{1} \oplus \c{H}_{2}$.

\begin{definition}[Reachability]
Let $\m{A}, \m{B} \in \c{B}_{1}^{+}(\c{H})$. We say that $\m{B}$ is \textbf{reachable} from $\m{A}$, denoted by $\m{A} \leadsto \m{B}$, if there exists a (possibly unbounded) operator $\m{T}: \c{R}(\m{A}^{1/2}) \subset \c{H}_{1} \rightarrow \c{H}$ satisfying transportability and optimality conditions:
\begin{align*}
    (\m{T} \m{A}^{1/2}) (\m{T} \m{A}^{1/2})^{*} = \m{B}, \quad \tr (\m{T} \m{A}) = \tr [(\m{A}^{1/2} \m{B} \m{A}^{1/2})^{1/2}].
\end{align*}
\end{definition}

In such cases, there exists an extension $\tilde{\m{T}}$ of $\m{T}$ that optimally pushes forward $\mu = N(\m{0}, \m{A})$ to $\nu = N(\m{0}, \m{B})$, satisfying:
\begin{align*}
    \c{W}_{2}^{2}(\m{A}, \m{B}) = \int_{\c{H} \times \c{H}} \|x-\tilde{\m{T}}(x)\|^{2} \mu(\rd x) = \tr [\m{A}] + \tr [\m{B}] - 2 \tr [(\m{A}^{1/2} \m{B} \m{A}^{1/2})^{1/2}].
\end{align*}
It has been established in \cite{yun2025gaussian} that for arbitrary $\m{A}, \m{B} \in \c{B}_{1}^{+}(\c{H})$, $\m{A} \leadsto \m{B}$ holds if and only if
\begin{align*}
    \dim [\c{R}(\m{B}^{1/2}) \cap \c{H}_{2}] \le \dim [\c{N}(\m{B}^{1/2} \m{A}^{1/2}) \cap \c{H}_{1}],
\end{align*}
and thus either $\m{A} \leadsto \m{B}$ or $\m{B} \leadsto \m{A}$; that is, a Monge solution always exists in at least one direction. In the finite dimensional case $\c{H} = \b{R}^{d}$, the rank nullity theorem reduces the above condition to $\mathrm{rank}(\m{A}) \ge \mathrm{rank}(\m{B})$, and in this case, there always exists an optimal linear transport map $\m{T} \in \b{R}^{d \times d}$.

\begin{theorem}[Kantorovich Problem]\label{thm:Kanto:char}
Let $\m{A}, \m{B} \in \c{B}_{1}^{+}(\c{H})$ be covariance operators such that $\m{A} \leadsto \m{B}$. Fix $\m{G} = \m{A}^{1/2} \in \c{G}(\m{A})$.
Then, the set of properly aligned Green's operators $\m{M} \in \c{G}(\m{B})$ is non-empty and is characterized by operators of the form:
\begin{align*}
    \m{M} =
    \begin{pmatrix}
    \m{A}_{{11}}^{\dagger/2} (\m{A}_{{11}}^{1/2} \m{B}_{{11}} \m{A}_{{11}}^{1/2})^{1/2} & \m{0} \\
    [(\m{A}_{{11}}^{1/2} \m{B}_{{11}} \m{A}_{{11}}^{1/2})^{\dagger/2} \m{A}_{{11}}^{1/2} \m{B}_{{12}} + \m{N}_{{12}}]^{*} & \m{M}_{{22}}
    \end{pmatrix}
        :
    \begin{array}{c}
    \c{H}_{1} \\
    \oplus \\
    \c{H}_{2}
    \end{array}
    \rightarrow
    \begin{array}{c}
    \c{H}_{1} \\
    \oplus \\
    \c{H}_{2}
    \end{array} ,
\end{align*}
where: 
\begin{itemize}[leftmargin = *]
    \item $\m{N}_{{12}}  : \c{H}_{2} \to \c{H}_{1}$ satisfies $\c{R}(\m{N}_{{12}}) \subset \c{N}(\m{B}_{{11}}^{1/2} \m{A}_{{11}}^{1/2})$ and $\m{N}_{{12}}^{*} \m{N}_{{12}} \preceq \m{B}/\m{A}$.
    \item $\m{M}_{{22}}  : \c{H}_{2} \to \c{H}_{2}$ satisfies $\m{M}_{{22}} \m{M}_{{22}}^{*} = \m{B}/\m{A} - \m{N}_{{12}}^{*} \m{N}_{{12}}$.
\end{itemize}
A corresponding optimal prepushforward exists (i.e., $\m{M} = \m{T}\m{A}^{1/2}$ for some linear operator $\m{T}: \c{R}(\m{A}^{1/2}) \subset \c{H}_{1} \rightarrow \c{H}$) if and only if one selects $\m{M}_{{22}} = \m{0}$, which implies $\m{N}_{{12}}^{*} \m{N}_{{12}} = \m{B}/\m{A}$.
\end{theorem}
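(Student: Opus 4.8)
The proof strategy is to funnel every constraint through the single positivity requirement $\m{G}^{*}\m{M}=\m{A}^{1/2}\m{M}\succeq\m{0}$, read off in the block decomposition $\c{H}=\c{H}_{1}\oplus\c{H}_{2}$. Since $\m{A}^{1/2}=\m{A}_{{11}}^{1/2}\oplus\m{0}$, writing $\m{M}=(\m{M}_{{ij}})$ the product $\m{A}^{1/2}\m{M}$ has a vanishing bottom row, so its self-adjointness forces $\m{A}_{{11}}^{1/2}\m{M}_{{12}}=\m{0}$; as $\m{A}_{{11}}$ is injective (so is $\m{A}_{{11}}^{1/2}$), this gives $\m{M}_{{12}}=\m{0}$ and every properly aligned $\m{M}$ is lower triangular. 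What remains is to solve the system $\m{A}_{{11}}^{1/2}\m{M}_{{11}}\succeq\m{0}$ together with the three block identities coming from $\m{M}\m{M}^{*}=\m{B}$, namely $\m{M}_{{11}}\m{M}_{{11}}^{*}=\m{B}_{{11}}$, $\m{M}_{{11}}\m{M}_{{21}}^{*}=\m{B}_{{12}}$, and $\m{M}_{{21}}\m{M}_{{21}}^{*}+\m{M}_{{22}}\m{M}_{{22}}^{*}=\m{B}_{{22}}$.

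For the $(1,1)$ block I would first establish the injective-case uniqueness: from $\m{A}_{{11}}^{1/2}\m{M}_{{11}}$ being self-adjoint and $\m{M}_{{11}}\m{M}_{{11}}^{*}=\m{B}_{{11}}$ one gets $(\m{A}_{{11}}^{1/2}\m{M}_{{11}})^{2}=\m{A}_{{11}}^{1/2}\m{M}_{{11}}\m{M}_{{11}}^{*}\m{A}_{{11}}^{1/2}=\m{A}_{{11}}^{1/2}\m{B}_{{11}}\m{A}_{{11}}^{1/2}$, so by uniqueness of the n.n.d.\ square root $\m{A}_{{11}}^{1/2}\m{M}_{{11}}=(\m{A}_{{11}}^{1/2}\m{B}_{{11}}\m{A}_{{11}}^{1/2})^{1/2}$, and injectivity of $\m{A}_{{11}}^{1/2}$ then forces $\m{M}_{{11}}=\m{A}_{{11}}^{\dagger/2}(\m{A}_{{11}}^{1/2}\m{B}_{{11}}\m{A}_{{11}}^{1/2})^{1/2}$. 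The analytic input is the inclusion $\c{R}((\m{A}_{{11}}^{1/2}\m{B}_{{11}}\m{A}_{{11}}^{1/2})^{1/2})\subseteq\c{R}(\m{A}_{{11}}^{1/2})$, which I would extract from the polar decomposition of $\m{B}_{{11}}^{1/2}\m{A}_{{11}}^{1/2}$; the same decomposition rewrites $\m{M}_{{11}}$ as $\m{B}_{{11}}^{1/2}$ composed with a partial isometry, which in one stroke yields $\m{M}_{{11}}\in\c{B}_{2}(\c{H}_{1})$, $\m{M}_{{11}}\m{M}_{{11}}^{*}=\m{B}_{{11}}$, and $\c{N}(\m{M}_{{11}})=\c{N}(\m{B}_{{11}}^{1/2}\m{A}_{{11}}^{1/2})$. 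With $\m{M}_{{11}}$ fixed, $\m{M}_{{11}}\m{M}_{{21}}^{*}=\m{B}_{{12}}$ is a linear equation for $\m{M}_{{21}}^{*}$; I would display the particular solution $(\m{A}_{{11}}^{1/2}\m{B}_{{11}}\m{A}_{{11}}^{1/2})^{\dagger/2}\m{A}_{{11}}^{1/2}\m{B}_{{12}}$, which is well-defined and Hilbert--Schmidt once one uses that $\m{B}_{{11}}^{\dagger/2}\m{B}_{{12}}$ is bounded (positivity of $\m{B}$) and that $\c{R}(\m{A}_{{11}}^{1/2}\m{B}_{{11}}^{1/2})=\c{R}((\m{A}_{{11}}^{1/2}\m{B}_{{11}}\m{A}_{{11}}^{1/2})^{1/2})$, and then argue the general solution differs from it by some $\m{N}_{{12}}\colon\c{H}_{2}\to\c{H}_{1}$ with $\c{R}(\m{N}_{{12}})\subseteq\c{N}(\m{M}_{{11}})=\c{N}(\m{B}_{{11}}^{1/2}\m{A}_{{11}}^{1/2})$.

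Substituting $\m{M}_{{21}}^{*}=(\m{A}_{{11}}^{1/2}\m{B}_{{11}}\m{A}_{{11}}^{1/2})^{\dagger/2}\m{A}_{{11}}^{1/2}\m{B}_{{12}}+\m{N}_{{12}}$ into the $(2,2)$ identity, the cross terms vanish because $\c{R}(\m{N}_{{12}})\subseteq\c{N}(\m{B}_{{11}}^{1/2}\m{A}_{{11}}^{1/2})=\c{N}((\m{A}_{{11}}^{1/2}\m{B}_{{11}}\m{A}_{{11}}^{1/2})^{\dagger/2})$, leaving $\m{M}_{{21}}\m{M}_{{21}}^{*}=\m{B}_{{12}}^{*}\m{A}_{{11}}^{1/2}(\m{A}_{{11}}^{1/2}\m{B}_{{11}}\m{A}_{{11}}^{1/2})^{\dagger}\m{A}_{{11}}^{1/2}\m{B}_{{12}}+\m{N}_{{12}}^{*}\m{N}_{{12}}$. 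The identity $\m{B}_{{12}}^{*}\m{A}_{{11}}^{1/2}(\m{A}_{{11}}^{1/2}\m{B}_{{11}}\m{A}_{{11}}^{1/2})^{\dagger}\m{A}_{{11}}^{1/2}\m{B}_{{12}}=(\m{B}_{{11}}^{\dagger/2}\m{B}_{{12}})^{*}(\m{B}_{{11}}^{\dagger/2}\m{B}_{{12}})$ is then obtained by writing $\m{B}_{{12}}=\m{B}_{{11}}^{1/2}(\m{B}_{{11}}^{\dagger/2}\m{B}_{{12}})$ and collapsing both sides via $\m{D}(\m{D}^{*}\m{D})^{\dagger}\m{D}^{*}=\m{\Pi}_{\overline{\c{R}(\m{D})}}$ with $\m{D}=\m{B}_{{11}}^{1/2}\m{A}_{{11}}^{1/2}$, noting $\overline{\c{R}(\m{D})}=\overline{\c{R}(\m{B}_{{11}})}$. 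Combined with the review's formula $\m{B}_{{22}}-(\m{B}_{{11}}^{\dagger/2}\m{B}_{{12}})^{*}(\m{B}_{{11}}^{\dagger/2}\m{B}_{{12}})=\m{B}/\m{A}$, this gives $\m{M}_{{22}}\m{M}_{{22}}^{*}=\m{B}/\m{A}-\m{N}_{{12}}^{*}\m{N}_{{12}}$, solvable exactly when $\m{N}_{{12}}^{*}\m{N}_{{12}}\preceq\m{B}/\m{A}$, in which case $\m{M}_{{22}}$ ranges over $\c{G}(\m{B}/\m{A}-\m{N}_{{12}}^{*}\m{N}_{{12}})$. This proves the forward inclusion; the converse (every $\m{M}$ of the displayed form is a properly aligned Green's operator) follows by running the block arithmetic backwards, observing $\m{A}^{1/2}\m{M}=(\m{A}_{{11}}^{1/2}\m{B}_{{11}}\m{A}_{{11}}^{1/2})^{1/2}\oplus\m{0}=(\m{A}^{1/2}\m{B}\m{A}^{1/2})^{1/2}\in\c{B}_{1}^{+}(\c{H})$ and that each block of $\m{M}$ is Hilbert--Schmidt since its Gram operator is dominated by a block of $\m{B}$. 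Nonemptiness then follows at once from the choice $\m{N}_{{12}}=\m{0}$, $\m{M}_{{22}}=(\m{B}/\m{A})^{1/2}$.

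For the characterization of prepushforwards: if $\m{M}=\m{T}\m{A}^{1/2}$ then $\c{H}_{2}=\c{N}(\m{A}^{1/2})\subseteq\c{N}(\m{M})$, so on the lower-triangular form $\m{M}_{{22}}=\m{0}$, whence $\m{N}_{{12}}^{*}\m{N}_{{12}}=\m{B}/\m{A}$; conversely, if $\m{M}_{{22}}=\m{0}$ then $\m{M}$ annihilates $\c{H}_{2}$, so $\m{T}(\m{A}^{1/2}v):=\m{M}v$ is well-defined on $\c{R}(\m{A}^{1/2})\subset\c{H}_{1}$ and satisfies $(\m{T}\m{A}^{1/2})(\m{T}\m{A}^{1/2})^{*}=\m{B}$ and $\tr(\m{T}\m{A})=\tr(\m{A}^{1/2}\m{M})=\tr[(\m{A}^{1/2}\m{B}\m{A}^{1/2})^{1/2}]$ by proper alignment, i.e.\ $\m{T}$ witnesses $\m{A}\leadsto\m{B}$; the hypothesis $\m{A}\leadsto\m{B}$ is precisely what makes such a member available, since it supplies enough room to build $\m{N}_{{12}}=\m{V}(\m{B}/\m{A})^{1/2}$ with $\m{V}$ a partial isometry into $\c{N}(\m{B}_{{11}}^{1/2}\m{A}_{{11}}^{1/2})$ and $\m{N}_{{12}}^{*}\m{N}_{{12}}=\m{B}/\m{A}$. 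I expect the main obstacle to be the disciplined handling of the unbounded pseudoinverses $\m{A}_{{11}}^{\dagger/2}$ and $(\m{A}_{{11}}^{1/2}\m{B}_{{11}}\m{A}_{{11}}^{1/2})^{\dagger/2}$ --- verifying the domain and range inclusions that keep $\m{M}_{{11}},\m{M}_{{21}}$ well-defined and bounded, together with the pseudoinverse identity above --- but these become routine once systematically reduced to the polar decomposition of $\m{B}_{{11}}^{1/2}\m{A}_{{11}}^{1/2}$ and the factorization $\m{B}_{{12}}=\m{B}_{{11}}^{1/2}(\m{B}_{{11}}^{\dagger/2}\m{B}_{{12}})$.
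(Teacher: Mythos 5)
Note first that the paper does not actually prove this statement: Theorem~\ref{thm:Kanto:char} is quoted in Section~\ref{ssec:rev:OT} purely as a review of \cite{yun2025gaussian}, with no proof supplied in the present manuscript. There is therefore no internal proof to compare against, and your proposal should be judged on its own merits.

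Judged that way, it is essentially correct. Funnelling everything through $\m{A}^{1/2}\m{M}\succeq\m{0}$ in the $\c{H}_{1}\oplus\c{H}_{2}$ block picture is the right move: the forced lower triangularity $\m{M}_{12}=\m{0}$, the identification of $\m{M}_{11}$ from the unique n.n.d. square root of $(\m{A}_{11}^{1/2}\m{M}_{11})^{2}=\m{A}_{11}^{1/2}\m{B}_{11}\m{A}_{11}^{1/2}$, the particular-plus-homogeneous decomposition of $\m{M}_{21}^{*}$ with $\c{R}(\m{N}_{12})\subseteq\c{N}(\m{M}_{11})=\c{N}(\m{B}_{11}^{1/2}\m{A}_{11}^{1/2})$, and the collapse of the $(2,2)$ constraint to $\m{M}_{22}\m{M}_{22}^{*}=\m{B}/\m{A}-\m{N}_{12}^{*}\m{N}_{12}$ via the Schur identity $\m{B}/\m{A}=\m{B}_{22}-(\m{B}_{11}^{\dagger/2}\m{B}_{12})^{*}(\m{B}_{11}^{\dagger/2}\m{B}_{12})$ are all sound, and the pseudoinverse bookkeeping (the range identity $\c{R}\bigl((\m{A}_{11}^{1/2}\m{B}_{11}\m{A}_{11}^{1/2})^{1/2}\bigr)=\c{R}(\m{A}_{11}^{1/2}\m{B}_{11}^{1/2})$ and the contraction $\m{D}(\m{D}^{*}\m{D})^{\dagger}\m{D}^{*}=\m{\Pi}_{\overline{\c{R}(\m{D})}}$ with $\m{D}=\m{B}_{11}^{1/2}\m{A}_{11}^{1/2}$) is handled correctly. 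The Hilbert--Schmidt estimates via Gram domination by blocks of $\m{B}$ and the equivalence $\m{M}=\m{T}\m{A}^{1/2}\iff\m{M}|_{\c{H}_{2}}=\m{0}\iff\m{M}_{22}=\m{0}$ are also fine.

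The one soft spot is your closing claim that the hypothesis $\m{A}\leadsto\m{B}$ ``supplies enough room'' to build a partial isometry $\m{V}$ into $\c{N}(\m{B}_{11}^{1/2}\m{A}_{11}^{1/2})$ with $\m{N}_{12}=\m{V}(\m{B}/\m{A})^{1/2}$. That requires $\dim\overline{\c{R}(\m{B}/\m{A})}\le\dim\bigl[\c{N}(\m{B}_{11}^{1/2}\m{A}_{11}^{1/2})\cap\c{H}_{1}\bigr]$, and you never connect this to the paper's stated reachability criterion $\dim[\c{R}(\m{B}^{1/2})\cap\c{H}_{2}]\le\dim[\c{N}(\m{B}^{1/2}\m{A}^{1/2})\cap\c{H}_{1}]$. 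A cleaner route that dodges the dimension count entirely: $\m{A}\leadsto\m{B}$ directly hands you $\m{T}$ with $(\m{T}\m{A}^{1/2})(\m{T}\m{A}^{1/2})^{*}=\m{B}$ and $\tr(\m{T}\m{A})=\tr[(\m{A}^{1/2}\m{B}\m{A}^{1/2})^{1/2}]$; by \eqref{eq:W2:bdd:HS} the trace condition forces $\m{A}^{1/2}\,\overline{\m{T}\m{A}^{1/2}}\succeq\m{0}$, so $\overline{\m{T}\m{A}^{1/2}}$ is itself a properly aligned Green's operator which vanishes on $\c{H}_{2}$ by construction and hence has $\m{M}_{22}=\m{0}$. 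Reading off its $\m{N}_{12}$ from the characterization you already proved then yields $\m{N}_{12}^{*}\m{N}_{12}=\m{B}/\m{A}$ with the correct range constraint, no partial isometry needed.
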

The optimal coupling in \eqref{eq:Kanto:block} depends solely on the choice of $\m{N}_{{12}}$, and is independent of $\m{M}_{{12}}$:
\begin{small}
\begin{align*}
    \m{G} \m{M}^{*} = \begin{pmatrix}
    \m{A}_{{11}}^{1/2} [\m{A}_{{11}}^{\dagger/2} (\m{A}_{{11}}^{1/2} \m{B}_{{11}} \m{A}_{{11}}^{1/2})^{1/2}]^{*}  & \m{A}_{{11}}^{1/2} (\m{A}_{{11}}^{1/2} \m{B}_{{11}} \m{A}_{{11}}^{1/2})^{\dagger/2} \m{A}_{{11}}^{1/2} \m{B}_{{12}}  + \m{A}_{{11}}^{1/2} \m{N}_{{12}} \\
    \m{0} & \m{0}
    \end{pmatrix} :
    \begin{array}{c}
    \c{H}_{1} \\
    \oplus \\
    \c{H}_{2}
    \end{array}
    \rightarrow
    \c{H}_{1}.
\end{align*}
\end{small}
Furthermore, the optimal Gaussian coupling is unique if and only if the $\m{A}$-Schur complement vanishes. 
In this case, the unique optimal prepushforward from $\m{A}$ to $\m{B}$ is given by
\begin{align*}
    \m{T} =
    \begin{pmatrix}
    \m{A}_{{11}}^{\dagger/2} (\m{A}_{{11}}^{1/2} \m{B}_{{11}} \m{A}_{{11}}^{1/2})^{1/2} \m{A}_{{11}}^{\dagger/2} \\
    ((\m{A}_{{11}}^{1/2} \m{B}_{{11}} \m{A}_{{11}}^{1/2})^{\dagger/2} \m{A}_{{11}}^{1/2} \m{B}_{{12}})^{*} \m{A}_{{11}}^{\dagger/2}
    \end{pmatrix}
        :
    \c{R}(\m{A}^{1/2}) \subset \c{H}_{1}
    \rightarrow
    \begin{array}{c}
    \c{H}_{1} \\
    \oplus \\
    \c{H}_{2}
    \end{array}.
\end{align*}

\section{Gaussian EOT}
We formally introduce the Entropic Optimal Transport (EOT) problem in the Gaussian setting. Given two centered Gaussian measures $\mu = \c{N}(\m{0}, \m{A})$ and $\nu = \c{N}(\m{0}, \m{B})$ on $\c{H}$, the EOT problem with regularization parameter $\varepsilon > 0$ with respect to mutual information is defined as:
\begin{align*}
    \c{W}_{2, \varepsilon} (\m{A}, \m{B}) := \left[ \inf_{\pi \in \Pi(\mu, \nu)} \left\{ \int_{\c{H} \times \c{H}} \|x-y\|^{2} \pi(\rd x \rd y) + 2 \varepsilon \KL(\pi \| \pi_{\indep}) \right\} \right]^{1/2},
\end{align*}
where the reference measure is the independent coupling:
\begin{align*}
    \pi_{\indep} := \mu \otimes \nu = \c{N}(\m{0}, \m{\Sigma}_{\indep}), \quad \m{\Sigma}_{\indep} := \begin{pmatrix} \m{A} & \m{0} \\ \m{0} & \m{B} \end{pmatrix} \in \c{B}_{1}^{+}(\c{H} \times \c{H}).
\end{align*}
While the definition allows $\pi$ to be any probability measure, we can restrict our search to Gaussian couplings without loss of generality due to the following argument.
Let $\pi \in \Pi(\mu, \nu)$ be any feasible coupling with finite second moments, and let $\pi_{G}$ be the centered Gaussian measure with the same covariance operator as $\pi$. Since the quadratic cost function depends solely on the second moments, $\pi$ and $\pi_{G}$ yield identical transport costs. 
Furthermore, since the reference measure $\pi_{\indep}$ is Gaussian, the KL divergence satisfies the well-known Pythagorean identity \cite{csiszar1975divergence}: $\KL(\pi \| \pi_{\indep}) = \KL(\pi \| \pi_{G}) + \KL(\pi_{G} \| \pi_{\indep})$.
Since $\KL(\pi \| \pi_{G}) \ge 0$ with equality if and only if $\pi = \pi_{G}$, the Gaussian coupling $\pi_{G}$ strictly dominates any non-Gaussian candidate with the same covariance. 
Finally, the uniqueness of the solution is guaranteed by the strict convexity of the mapping $\pi \mapsto \KL(\pi \| \pi_{\indep})$ on the convex set of couplings $\Pi(\mu, \nu)$, combined with the linearity of the transport cost.
Thus, we assume $\pi = \c{N}(\m{0}, \m{\Sigma})$ where $\m{\Sigma}$ has marginals $\m{A}$ and $\m{B}$.
For the entropic cost to be finite, the coupling $\pi$ must be absolutely continuous with respect to $\pi_{\indep}$ so that $\KL(\pi \| \pi_{\indep}) < \infty$. Our first task is to completely characterize the geometry of such Gaussian couplings.

\subsection{Admissible Correlations}

\begin{proposition}[Generalized Baker-Douglas Theorem \cite{douglas1966majorization, baker1970mutual}]
Let $\c{H}$ be a separable Hilbert space and $\m{A}, \m{B} \in \c{B}_{1}^{+}(\c{H})$ be trace-class covariance operators. Fix any Green's operators $\m{G} \in \c{G}(\m{A})$ and $\m{M} \in \c{G}(\m{B})$.
The Gaussian coupling of $\c{N}(\m{0}, \m{A})$ and $\c{N}(\m{0}, \m{B})$ on $\c{H} \times \c{H}$ admits a covariance of the form:
\begin{align*}
    \m{\Sigma} = 
    \left(
    \begin{array}{c|c}
    \m{A} & \m{C} \\
    \hline
    \m{C}^{*} & \m{B}
    \end{array}
    \right) \in \c{B}_{1}^{+}(\c{H} \times \c{H}),
\end{align*}
for some shrinkage $\m{R} \in \c{B}_{\infty}(\c{H})$ (i.e., $\vertiii{\m{R}}_{\infty} \le 1$) such that the cross-covariance is given by $\m{C} = \m{G} \m{R} \m{M}^*$. This operator $\m{R}$ is unique if one imposes the following constraints, in which case we call it the \textbf{Green's correlation operator}:
\begin{align*}
    \c{N}(\m{R}) \supset \c{N}(\m{M}) \quad \text{and} \quad \c{R}(\m{R}) \subset \c{N}(\m{G})^{\perp}.
\end{align*}
Explicitly, it is given by $\m{R} = \left[ \m{M}^{\dagger} (\m{G}^{\dagger} \m{C})^{*} \right]^{*} = \overline{\m{G}^{\dagger} \m{C} (\m{M}^{*})^{\dagger}}$.
\end{proposition}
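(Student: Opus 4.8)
The plan is to show that a block operator $\m{\Sigma}=\bigl(\begin{smallmatrix}\m{A} & \m{C}\\ \m{C}^{*} & \m{B}\end{smallmatrix}\bigr)$ is a legitimate (trace-class, n.n.d.) Gaussian covariance with marginals $\m{A},\m{B}$ \emph{exactly} when $\m{C}=\m{G}\m{R}\m{M}^{*}$ for some contraction $\m{R}$, and then to single out the canonical $\m{R}$ and check the closed form. For the \emph{sufficiency} direction, suppose $\vertiii{\m{R}}_{\infty}\le 1$ and $\m{C}=\m{G}\m{R}\m{M}^{*}$. I would write the factorization
\[
  \m{\Sigma}
  = \begin{pmatrix}\m{G} & \m{0}\\ \m{0} & \m{M}\end{pmatrix}
    \begin{pmatrix}\m{I} & \m{R}\\ \m{R}^{*} & \m{I}\end{pmatrix}
    \begin{pmatrix}\m{G}^{*} & \m{0}\\ \m{0} & \m{M}^{*}\end{pmatrix},
\]
and note $\bigl(\begin{smallmatrix}\m{I} & \m{R}\\ \m{R}^{*} & \m{I}\end{smallmatrix}\bigr)\succeq\m{0}$ since $\|x\|^{2}+2\innpr{\m{R}y}{x}+\|y\|^{2}\ge(\|x\|-\|y\|)^{2}\ge 0$ when $\vertiii{\m{R}}_{\infty}\le 1$; taking its bounded n.n.d.\ square root $\m{S}$ gives $\m{\Sigma}=\m{T}\m{T}^{*}\succeq\m{0}$ with $\m{T}=\mathrm{diag}(\m{G},\m{M})\m{S}$. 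Trace-class membership is then immediate: $\m{G},\m{M}^{*}\in\c{B}_{2}(\c{H})$ force $\m{C}\in\c{B}_{1}(\c{H})$, so all four blocks are trace-class (equivalently, $\m{\Sigma}\succeq\m{0}$ has finite trace $\tr\m{A}+\tr\m{B}$), and reading off the diagonal confirms the marginals.

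For \emph{necessity}, assume $\m{\Sigma}\succeq\m{0}$ with the stated diagonal blocks. Testing positivity against $(tx,y)$ for $x,y\in\c{H}$, $t\in\b{R}$, and optimizing in $t$ yields the Cauchy--Schwarz-type bound $\innpr{\m{C}y}{x}^{2}\le\innpr{\m{A}x}{x}\innpr{\m{B}y}{y}=\|\m{G}^{*}x\|^{2}\|\m{M}^{*}y\|^{2}$. Hence the rule $(\m{G}^{*}x,\m{M}^{*}y)\mapsto\innpr{\m{C}y}{x}$ is well defined on $\c{R}(\m{G}^{*})\times\c{R}(\m{M}^{*})$ (the bound kills the dependence on the representatives $x,y$) and is bilinear there with norm at most $1$; extending it by continuity to $\overline{\c{R}(\m{G}^{*})}\times\overline{\c{R}(\m{M}^{*})}$ and setting it to zero when either argument is orthogonal to those closures, the standard identification of bounded bilinear forms with bounded operators furnishes a unique $\m{R}\in\c{B}_{\infty}(\c{H})$ with $\vertiii{\m{R}}_{\infty}\le 1$ and $\innpr{\m{C}y}{x}=\innpr{\m{G}\m{R}\m{M}^{*}y}{x}$ for all $x,y$, hence $\m{C}=\m{G}\m{R}\m{M}^{*}$; moreover, by the vanishing on the complements, $\c{R}(\m{R})\subset\overline{\c{R}(\m{G}^{*})}=\c{N}(\m{G})^{\perp}$ and $\c{N}(\m{R})\supset\overline{\c{R}(\m{M}^{*})}^{\perp}=\c{N}(\m{M})$. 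So existence of $\m{R}$ together with its normalization drops out in one stroke. (Alternatively, one could first invoke Douglas's majorization lemma to write $\m{C}=\m{A}^{1/2}\m{R}_{0}\m{B}^{1/2}$ with $\vertiii{\m{R}_{0}}_{\infty}\le 1$ in Baker's manner, and then conjugate through the partial isometries $\m{U}_{\m{G}}=\m{A}^{\dagger/2}\m{G}$, $\m{U}_{\m{M}}=\m{B}^{\dagger/2}\m{M}$ using $\m{A}^{1/2}=\m{G}\m{U}_{\m{G}}^{*}$, $\m{B}^{1/2}=\m{M}\m{U}_{\m{M}}^{*}$; but the bilinear-form route has the advantage of producing the canonical $\m{R}$ directly.)

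For \emph{uniqueness}, suppose $\m{R}_{1},\m{R}_{2}$ both satisfy $\m{C}=\m{G}\m{R}_{i}\m{M}^{*}$, $\c{R}(\m{R}_{i})\subset\c{N}(\m{G})^{\perp}$, $\c{N}(\m{R}_{i})\supset\c{N}(\m{M})$, and put $\m{D}=\m{R}_{1}-\m{R}_{2}$. Then $\m{G}\m{D}\m{M}^{*}=\m{0}$; since $\m{G}$ is injective on $\c{N}(\m{G})^{\perp}\supset\c{R}(\m{D})$, this forces $\m{D}\m{M}^{*}=\m{0}$, i.e.\ $\overline{\c{R}(\m{M}^{*})}\subset\c{N}(\m{D})$, which with $\c{N}(\m{M})\subset\c{N}(\m{D})$ and $\c{H}=\overline{\c{R}(\m{M}^{*})}\oplus\c{N}(\m{M})$ gives $\m{D}=\m{0}$. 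Finally, to verify the closed form I would substitute the canonical $\m{R}$: since $\c{R}(\m{C})\subset\c{R}(\m{G})\subset\mathrm{dom}(\m{G}^{\dagger})$ and $\m{G}^{\dagger}\m{G}=\m{\Pi}_{\c{N}(\m{G})^{\perp}}$, the composite $\m{G}^{\dagger}\m{C}=\m{\Pi}_{\c{N}(\m{G})^{\perp}}\m{R}\m{M}^{*}=\m{R}\m{M}^{*}$ is \emph{bounded} (using $\c{R}(\m{R})\subset\c{N}(\m{G})^{\perp}$); then $(\m{G}^{\dagger}\m{C})^{*}=\m{M}\m{R}^{*}$ has range in $\c{R}(\m{M})$, so $\m{M}^{\dagger}(\m{G}^{\dagger}\m{C})^{*}=\m{\Pi}_{\c{N}(\m{M})^{\perp}}\m{R}^{*}=\m{R}^{*}$ (using $\c{R}(\m{R}^{*})\subset\c{N}(\m{M})^{\perp}$), whence $[\m{M}^{\dagger}(\m{G}^{\dagger}\m{C})^{*}]^{*}=\m{R}$; the twin expression $\overline{\m{G}^{\dagger}\m{C}(\m{M}^{*})^{\dagger}}$ names the same operator, the closure absorbing the unboundedness of $(\m{M}^{*})^{\dagger}$. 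I expect the main obstacle to be exactly this domain bookkeeping --- certifying that each composition with $\m{G}^{\dagger}$, $\m{M}^{\dagger}$, $(\m{M}^{*})^{\dagger}$ lives on the correct dense domain and that $\m{G}^{\dagger}\m{G}$, $\m{M}^{\dagger}\m{M}$ act as the advertised projections even when $\c{R}(\m{A})$, $\c{R}(\m{B})$ are not closed --- together with the careful (if routine) checking that the bilinear form in the necessity step is genuinely well defined and bounded on the non-closed ranges $\c{R}(\m{G}^{*})$ and $\c{R}(\m{M}^{*})$.
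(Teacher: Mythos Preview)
Your proposal is correct and in fact more complete than the paper's own argument. The paper proves only necessity: from $\m{\Sigma}\succeq\m{0}$ it derives the Cauchy--Schwarz bound $|\langle u,\m{C}v\rangle|^{2}\le\langle u,\m{A}u\rangle\langle v,\m{B}v\rangle$, deduces the range inclusions $\c{R}(\m{C})\subset\c{R}(\m{G})$ and $\c{R}(\m{C}^{*})\subset\c{R}(\m{M})$, and then invokes the Douglas Factorization Lemma as a black box to obtain existence, uniqueness under the stated constraints, the Moore--Penrose formula, and the minimal-norm property (whence $\vertiii{\m{R}}_{\infty}\le 1$). It does not spell out the sufficiency direction, the uniqueness argument, or the verification of the closed form.

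Your route differs in that you bypass Douglas's lemma and construct $\m{R}$ directly from the bounded bilinear form $(\m{G}^{*}x,\m{M}^{*}y)\mapsto\langle\m{C}y,x\rangle$; this yields existence, the contraction bound, and the normalizing constraints in one stroke, and you then supply the elementary uniqueness and closed-form verifications by hand. The bilinear-form construction is more self-contained and arguably better adapted to the \emph{two-sided} factorization $\m{C}=\m{G}\m{R}\m{M}^{*}$, whereas the paper's appeal to Douglas (stated for one-sided factorizations $\m{C}=\m{G}\m{X}$) implicitly requires a double application. Both approaches rest on the same Cauchy--Schwarz inequality from positivity; yours trades brevity for transparency.
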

\begin{proof}
The condition $\m{\Sigma} \succeq \m{0}$ implies that for any $u, v \in \c{H}$, $|\langle u, \m{C} v \rangle|^2 \le \langle u, \m{A} u \rangle \langle v, \m{B} v \rangle$. This inequality necessitates the range inclusions $\c{R}(\m{C}) \subset \c{R}(\m{A}^{1/2})$ and $\c{R}(\m{C}^*) \subset \c{R}(\m{B}^{1/2})$. 
Since $\c{R}(\m{A}^{1/2}) = \c{R}(\m{G})$ and $\c{R}(\m{B}^{1/2}) = \c{R}(\m{M}) = \c{R}(\m{M}^{\dagger *})$, the Douglas Factorization Lemma guarantees the existence of a bounded operator $\m{R}$ such that $\m{C} = \m{G} \m{R} \m{M}^{*}$. 
Furthermore, the Douglas lemma states that among all such operators, there exists a unique solution satisfying the geometric constraints $\c{N}(\m{R}) \supset \c{N}(\m{M})$ and $\c{R}(\m{R}) \subset \c{N}(\m{G})^{\perp}$. This specific solution coincides with the Moore-Penrose formulation $\overline{\m{G}^{\dagger} \m{C} (\m{M}^{*})^{\dagger}}$. Additionally, it possesses the minimal operator norm among all valid factors; since there exists at least one contraction satisfying the factorization (by the Cauchy-Schwarz argument on $\m{\Sigma} \succeq \m{0}$):
\begin{align*}
    |\innpr{(\m{G}^{*} u)}{\m{R}(\m{M}^{*} v)}|^{2} = |\langle u, \m{C} v \rangle|^2 \le \| \m{G}^{*} u\|^{2} \|\m{M}^{*} v\|^{2},
\end{align*}
this unique minimal-norm solution must satisfy $\vertiii{\m{R}}_{\infty} \le 1$.
\end{proof}

\begin{theorem}[Feldman-Hájek Dichotomy for Couplings \cite{da2006introduction}]\label{thm:feld:coup}
Let $\m{A}, \m{B} \in \c{B}_{1}^{+}(\c{H})$ and fix any Green's operators $\m{G} \in \c{G}(\m{A})$ and $\m{M} \in \c{G}(\m{B})$. Let $\pi$ be a centered Gaussian measure on $\c{H} \times \c{H}$ defined by the covariance operator:
\begin{align*}
    \m{\Sigma} = \left(
    \begin{array}{c|c}
    \m{A} & \m{G} \m{R} \m{M}^* \\
    \hline
    \m{M} \m{R}^* \m{G}^* & \m{B}
    \end{array}
    \right) \in \c{B}_{1}^{+}(\c{H} \times \c{H}),
\end{align*}
where $\m{R}$ is the Green's correlation operator for $\m{\Sigma}$. Then, $\pi$ is equivalent to $\pi_{\indep}$ if and only if
\begin{align}\label{eq:FH:dicho}
     \m{R} \in \c{B}_{2}(\c{H}) \quad \text{and} \quad \vertiii{\m{R}}_{\infty} < 1;
\end{align}
otherwise, they are mutually singular.
\end{theorem}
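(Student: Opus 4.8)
The plan is to bring $\m{\Sigma}$ and $\m{\Sigma}_{\indep}$ into the form to which the Feldman--H\'ajek theorem \cite{da2006introduction} applies in its ``canonical correction'' guise: two centered Gaussians $N(\m{0}, \m{Q}_{1})$ and $N(\m{0}, \m{Q}_{2})$ are equivalent if and only if $\m{Q}_{2} = \m{Q}_{1}^{1/2}(\m{I} + \m{K}) \m{Q}_{1}^{1/2}$ for some self-adjoint $\m{K} \in \c{B}_{2}$ acting on $\overline{\c{R}(\m{Q}_{1}^{1/2})}$ with $\m{I} + \m{K}$ boundedly invertible (otherwise, by the Feldman--H\'ajek dichotomy, they are mutually singular). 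With $\m{Q}_{1} = \m{\Sigma}_{\indep}$ and $\m{Q}_{2} = \m{\Sigma}$, the whole argument then reduces to exhibiting this correction operator $\m{K}$ and translating the two conditions ``$\m{K} \in \c{B}_{2}$'' and ``$\m{I} + \m{K}$ invertible'' into ``$\m{R} \in \c{B}_{2}$'' and ``$\vertiii{\m{R}}_{\infty} < 1$'' respectively.

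Since $\m{\Sigma}_{\indep}^{1/2} = \mathrm{diag}(\m{A}^{1/2}, \m{B}^{1/2})$, I would verify by a direct block computation that $\m{\Sigma} = \m{\Sigma}_{\indep}^{1/2}(\m{I} + \m{K}) \m{\Sigma}_{\indep}^{1/2}$, where $\m{K}$ is the operator on $\overline{\c{R}(\m{\Sigma}_{\indep}^{1/2})} = \overline{\c{R}(\m{A})} \times \overline{\c{R}(\m{B})}$ with off-diagonal block $\m{R}_{0} := \m{U}_{\m{G}} \m{R}\, \m{U}_{\m{M}}^{*}$ built from the partial isometries $\m{U}_{\m{G}} = \m{A}^{\dagger/2}\m{G}$, $\m{U}_{\m{M}} = \m{B}^{\dagger/2}\m{M}$:
\begin{align*}
    \m{K} = \left(
    \begin{array}{c|c}
    \m{0} & \m{R}_{0} \\
    \hline
    \m{R}_{0}^{*} & \m{0}
    \end{array}
    \right);
\end{align*}
indeed $\m{A}^{1/2} \m{U}_{\m{G}} = \m{G}$ and $\m{U}_{\m{M}}^{*} \m{B}^{1/2} = \m{M}^{*}$, so the off-diagonal block of $\m{\Sigma}_{\indep}^{1/2}(\m{I}+\m{K})\m{\Sigma}_{\indep}^{1/2}$ is $\m{A}^{1/2}\m{R}_{0}\m{B}^{1/2} = \m{G}\m{R}\m{M}^{*}$, as required. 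Next I would record that $\m{R}_{0}$ has $\c{R}(\m{R}_{0}) \subset \overline{\c{R}(\m{A})}$ and $\c{N}(\m{R}_{0}) \supset \c{N}(\m{B})$, so it is exactly the Green's correlation operator of $\m{\Sigma}$ for the canonical pair $(\m{A}^{1/2}, \m{B}^{1/2})$, and that $\m{U}_{\m{G}}^{*} \m{R}_{0} \m{U}_{\m{M}} = \m{R}$, because the defining constraints $\c{R}(\m{R}) \subset \c{N}(\m{G})^{\perp}$ and $\c{N}(\m{R}) \supset \c{N}(\m{M})$ force the partial isometries to act isometrically on the subspaces carrying $\m{R}$; hence conjugation by $\m{U}_{\m{G}}, \m{U}_{\m{M}}$ preserves all Schatten norms, giving $\vertii{\m{R}_{0}}_{2} = \vertii{\m{R}}_{2}$ and $\vertiii{\m{R}_{0}}_{\infty} = \vertiii{\m{R}}_{\infty}$. (One could equally note that the truth of \eqref{eq:FH:dicho} is independent of the choice of Green's operators and simply reduce to $\m{G} = \m{A}^{1/2}$, $\m{M} = \m{B}^{1/2}$ from the start; the factorization $\m{\Sigma} = \m{G}_{\m{\Sigma}}\m{G}_{\m{\Sigma}}^{*}$ with $\m{G}_{\m{\Sigma}} = \m{\Sigma}_{\indep}^{1/2} \left(\begin{smallmatrix}\m{I} & \m{0} \\ \m{R}_{0}^{*} & (\m{I}-\m{R}_{0}^{*}\m{R}_{0})^{1/2}\end{smallmatrix}\right)$ then makes the role of $\vertiii{\m{R}_{0}}_{\infty} < 1$ transparent, the right-hand factor being boundedly invertible precisely under this condition.)

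The last step is to read off the spectral data of $\m{K}$. It is self-adjoint, and $\m{K}^{2} = \mathrm{diag}(\m{R}_{0}\m{R}_{0}^{*}, \m{R}_{0}^{*}\m{R}_{0})$, so $\vertii{\m{K}}_{2}^{2} = 2\vertii{\m{R}_{0}}_{2}^{2}$ and therefore $\m{K} \in \c{B}_{2} \iff \m{R}_{0} \in \c{B}_{2}$. Moreover $\m{K}$ is unitarily equivalent to $-\m{K}$ via $\mathrm{diag}(\m{I}, -\m{I})$, so $\sigma(\m{K})$ is symmetric about $0$; combined with the spectral mapping theorem this yields $-1 \in \sigma(\m{K}) \iff 1 \in \sigma(\m{K}^{2}) \iff 1 \in \sigma(\m{R}_{0}^{*}\m{R}_{0}) \iff \vertiii{\m{R}_{0}}_{\infty} = 1$, the last equivalence because $\m{R}_{0}$ is a contraction, so $\max\sigma(\m{R}_{0}^{*}\m{R}_{0}) = \vertiii{\m{R}_{0}}_{\infty}^{2} \le 1$. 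Hence $\m{I} + \m{K}$ is boundedly invertible $\iff \vertiii{\m{R}_{0}}_{\infty} < 1$. Combining: $\pi \sim \pi_{\indep} \iff \m{K} \in \c{B}_{2}$ and $\m{I}+\m{K}$ invertible $\iff \m{R}_{0} \in \c{B}_{2}$ and $\vertiii{\m{R}_{0}}_{\infty} < 1 \iff \m{R} \in \c{B}_{2}$ and $\vertiii{\m{R}}_{\infty} < 1$; and when this fails, the Feldman--H\'ajek dichotomy forces $\pi$ and $\pi_{\indep}$ to be mutually singular, which is the claim.

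The main obstacle I anticipate is the bookkeeping in the reduction step: one has to make sure the block manipulations take place on the correct subspaces --- that $\m{R}_{0} = \m{U}_{\m{G}}\m{R}\m{U}_{\m{M}}^{*}$ is genuinely supported on $\overline{\c{R}(\m{A})} \times \overline{\c{R}(\m{B})}$, that replacing the Green's operator $\mathrm{diag}(\m{G}, \m{M})$ by the genuine square root $\m{\Sigma}_{\indep}^{1/2}$ (which introduces the projection onto $\overline{\c{R}(\m{\Sigma}_{\indep})}$) costs nothing, and that the partial-isometry conjugation is norm-preserving in both directions --- all of which comes down to carefully tracking the initial and final spaces of $\m{U}_{\m{G}}, \m{U}_{\m{M}}$ against the constraints that pin down the Green's correlation operator. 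A secondary subtlety is quoting the right version of Feldman--H\'ajek: it is the \emph{invertibility} of $\m{I}+\m{K}$, not merely $\m{I}+\m{K} \succeq \m{0}$ (which is automatic from $\m{\Sigma} \succeq \m{0}$), that produces the \emph{strict} inequality $\vertiii{\m{R}}_{\infty} < 1$ in the statement.
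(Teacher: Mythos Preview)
Your proof is correct and follows essentially the same route as the paper's: both identify the Feldman--H\'ajek correction operator as the block off-diagonal $\m{K}$ (the paper calls it $\m{T}$) with entry $\m{R}_{0} = \m{U}_{\m{G}}\m{R}\,\m{U}_{\m{M}}^{*}$, and both use the partial-isometry conjugation to transfer Schatten norms between $\m{R}_{0}$ and $\m{R}$. The only cosmetic difference is in extracting the strict bound $\vertiii{\m{R}}_{\infty} < 1$: the paper obtains it from the range-equivalence condition via the Schur complement of $\left(\begin{smallmatrix}\m{I}&\m{R}\\\m{R}^{*}&\m{I}\end{smallmatrix}\right)$, whereas you obtain it from the invertibility of $\m{I}+\m{K}$ via the spectral symmetry $\sigma(\m{K}) = -\sigma(\m{K})$; both arguments are short and either suffices.
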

\begin{proof}
By the classical Feldman-Hájek theorem, $\pi$ and $\pi_{\indep}$ are equivalent if and only if:
\begin{enumerate}[leftmargin = *]
    \item \textbf{Range Equivalence:} $\c{R}(\m{\Sigma}^{1/2}) = \c{R}(\m{\Sigma}_{\indep}^{1/2})$.
    \item \textbf{Hilbert-Schmidt Defect:} The operator $\m{T} = \m{\Sigma}_{\indep}^{\dagger/2} (\m{\Sigma} - \m{\Sigma}_{\indep}) \m{\Sigma}_{\indep}^{\dagger/2}$ is Hilbert-Schmidt on the Cameron-Martin space of $\pi_{\indep}$, and $\m{I} + \m{T}$ is invertible,
\end{enumerate}
otherwise mutually singular.

\noindent \textbf{Step 1: Range Equivalence ($ \Longleftrightarrow \ \vertiii{\m{R}}_{\infty} < 1$).} \\
Let $\m{S} = \text{diag}(\m{G}, \m{M})$. We observe that $\m{\Sigma}_{\indep} = \m{S}\m{S}^*$. We can factor the coupled covariance $\m{\Sigma}$ similarly:
\begin{align*}
    \m{\Sigma} = \m{S} \underbrace{\left(
    \begin{array}{c|c}
    \m{I} & \m{R} \\
    \hline
    \m{R}^* & \m{I}
    \end{array}
    \right)}_{\m{K}} \m{S}^*.
\end{align*}
Since $\c{R}(\m{\Sigma}_{\indep}^{1/2}) = \c{R}(\m{S})$, the ranges coincide if and only if the core matrix $\m{K}$ is a bounded invertible operator on the support of $\pi_{\indep}$.
Using Schur complements, $\m{K} \succ \m{0}$ if and only if $\m{I} - \m{R}^* \m{R} \succ \m{0}$. This condition is equivalent to the spectrum of $\m{R}^* \m{R}$ being strictly bounded below $1$, which implies $\vertiii{\m{R}}_{\infty} < 1$. Note that this strict inequality automatically guarantees that $\m{K}$ (and thus $\m{I}+\m{T}$) is invertible.

\noindent \textbf{Step 2: Hilbert-Schmidt Condition ($\Longleftrightarrow \ \m{R} \in \c{B}_{2}(\c{H})$).} \\
We examine the relative covariance operator $\m{T}$. Since the measures may be degenerate:
\begin{align*}
    \renewcommand{\arraystretch}{1.3}
    \m{T} &= 
    \left(
    \begin{array}{c|c}
    \m{A}_1^{\dagger/2} & \m{0} \\
    \hline
    \m{0} & \m{B}^{\dagger/2}
    \end{array}
    \right)
    \left(
    \begin{array}{c|c}
    \m{0} & \m{G} \m{R} \m{M}^* \\
    \hline
    \m{M} \m{R}^* \m{G}^* & \m{0}
    \end{array}
    \right)
    \left(
    \begin{array}{c|c}
    \m{A}_1^{\dagger/2} & \m{0} \\
    \hline
    \m{0} & \m{B}^{\dagger/2}
    \end{array}
    \right) \\
    &= 
    \left(
    \begin{array}{c|c}
    \m{0} & (\m{A}_1^{\dagger/2} \m{G}) \m{R} (\m{B}^{\dagger/2} \m{M})^* \\
    \hline
    (\m{B}^{\dagger/2} \m{M}) \m{R}^* (\m{A}_1^{\dagger/2} \m{G})^{*} & \m{0}
    \end{array}
    \right)
    = 
    \left(
    \begin{array}{c|c}
    \m{0} & \m{U}_{\m{G}} \m{R} \m{U}_{\m{M}}^* \\
    \hline
    \m{U}_{\m{M}} \m{R}^* \m{U}_{\m{G}}^* & \m{0}
    \end{array}
    \right),
\end{align*}
where we invoke the partial isometries defined in the preliminaries: $\m{U}_{\m{G}} := \m{A}^{\dagger/2} \m{G}$ and $\m{U}_{\m{M}}:=\m{B}^{\dagger/2} \m{M}$. Thus, $\m{T} \in \c{B}_2(\c{H} \times \c{H})$ if and only if its off-diagonal block $\tilde{\m{R}} = \m{U}_{\m{G}} \m{R} \m{U}_{\m{M}}^* \in \c{B}_{2}(\c{H})$ is Hilbert-Schmidt.
The partial isometry $\m{U}_{\m{M}}^*$ maps $\c{N}(\m{B})^\perp$ isometrically to $\c{N}(\m{M})^\perp$. The operator $\m{R}$ maps $\c{N}(\m{M})^\perp$ to $\c{N}(\m{G})^\perp$. Finally, $\m{U}_{\m{G}}$ maps $\c{N}(\m{G})^\perp$ isometrically to $\c{N}(\m{A})^\perp$.
Since the partial isometries are isometric on the support and range of $\m{R}$ (due to the geometric constraints of the Green's correlation operator), they preserve the Hilbert-Schmidt norms: $\vertiii{\m{U}_{\m{G}} \m{R} \m{U}_{\m{M}}^*}_{2} = \vertiii{\m{R}}_{2}$.
Therefore, $\m{T} \in \c{B}_2(\c{H} \times \c{H})$ if and only if $\m{R} \in \c{B}_2(\c{H})$.
\end{proof}

As a direct consequence, it suffices for the Gaussian EOT to consider the couplings where the Green's correlation operator lies within the following convex set:
\begin{align*}
    \c{F}(\m{G}, \m{M}) := \left\{ \m{R} \in \c{B}_{2}(\c{H}) \;\big|\; \c{N}(\m{R}) \supset \c{N}(\m{M}), \, \c{R}(\m{R}) \subset \c{N}(\m{G})^{\perp}, \, \vertiii{\m{R}}_{\infty} < 1 \right\}. 
\end{align*}

\begin{proposition}
Let $\m{A}, \m{B} \in \c{B}_{1}^{+}(\c{H})$ and fix any Green's operators $\m{G} \in \c{G}(\m{A})$ and $\m{M} \in \c{G}(\m{B})$. Let $\pi$ be a centered Gaussian coupling of $\mu = \c{N}(\m{0}, \m{A})$ and $\nu = \c{N}(\m{0}, \m{B})$ on $\c{H} \times \c{H}$, and let $\m{R}$ be the Green's correlation operator for its covariance. Then, the KL divergence is given by:
\begin{align*}
    \KL(\pi \| \pi_{\indep}) = 
    \begin{cases}
        -\frac{1}{2} \log \det(\m{I} - \m{R}^* \m{R}) &, \quad \m{R} \in \c{F}(\m{G}, \m{M}), \\
        +\infty &, \quad \text{otherwise},
    \end{cases}
\end{align*}
where $\det$ denotes the standard Fredholm determinant for trace-class perturbations \cite{gohberg2012traces}.
\end{proposition}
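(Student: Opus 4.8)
The plan is to separate the two branches of the statement. If $\m{R} \notin \c{F}(\m{G},\m{M})$, then Theorem~\ref{thm:feld:coup} says $\pi$ and $\pi_{\indep}$ are mutually singular, so $\pi$ is not absolutely continuous with respect to $\pi_{\indep}$ and $\KL(\pi \| \pi_{\indep}) = +\infty$ by the definition of relative entropy; this settles the second case. It remains to treat $\m{R} \in \c{F}(\m{G},\m{M})$, i.e.\ $\m{R} \in \c{B}_{2}(\c{H})$ and $\vertiii{\m{R}}_{\infty} < 1$. Then $\m{R}^{*}\m{R} \in \c{B}_{1}^{+}(\c{H})$ and $\m{I} - \m{R}^{*}\m{R}$ is boundedly invertible, so $-\tfrac12\log\det(\m{I}-\m{R}^{*}\m{R})$ is a well-defined finite number, and I would obtain the identity by computing the relative entropy of finite-dimensional truncations in closed form and passing to the limit.

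Concretely, let $\{\m{P}_{N}^{(1)}\}_{N}$ and $\{\m{P}_{N}^{(2)}\}_{N}$ be increasing sequences of finite-rank spectral projections of $\m{A}$ and of $\m{B}$ supported on their nonzero eigenvalues, so that $\m{P}_{N}^{(1)} \uparrow \m{\Pi}_{\overline{\c{R}(\m{A})}}$ and $\m{P}_{N}^{(2)} \uparrow \m{\Pi}_{\overline{\c{R}(\m{B})}}$ in the SOT, and put $\m{P}_{N} := \m{P}_{N}^{(1)} \oplus \m{P}_{N}^{(2)}$; write $\m{\Sigma}_{N}$, $\m{\Sigma}_{0,N}$, $\m{A}_{N}$, $\m{B}_{N}$ for the compressions of $\m{\Sigma}$, $\m{\Sigma}_{\indep}$, $\m{A}$, $\m{B}$ to $\c{R}(\m{P}_{N})$, a space of dimension $d_{N} := \dim\c{R}(\m{P}_{N})$. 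With the factorization $\m{\Sigma} = \m{S}\m{K}\m{S}^{*}$ from the proof of Theorem~\ref{thm:feld:coup}, the bound $\m{K} \succeq (1-\vertiii{\m{R}}_{\infty})\m{I}$ gives $\m{\Sigma} \succeq (1-\vertiii{\m{R}}_{\infty})\m{\Sigma}_{\indep}$, and since each $\m{P}_{N}^{(i)}$ commutes with the corresponding marginal, compressing yields $\m{0} \prec (1-\vertiii{\m{R}}_{\infty})\m{\Sigma}_{0,N} \preceq \m{\Sigma}_{N}$ on $\c{R}(\m{P}_{N})$. Hence the images $(\m{P}_{N})_{\#}\pi$ and $(\m{P}_{N})_{\#}\pi_{\indep}$ are mutually equivalent nondegenerate centered Gaussians, and the classical formula $\KL(N(\m{0},\m{\Sigma}_{N})\|N(\m{0},\m{\Sigma}_{0,N})) = \tfrac12[\tr(\m{\Sigma}_{0,N}^{-1}\m{\Sigma}_{N}) - d_{N} - \log\det(\m{\Sigma}_{0,N}^{-1}\m{\Sigma}_{N})]$ applies. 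Writing $\m{T}_{N} := \m{\Sigma}_{0,N}^{-1/2}(\m{\Sigma}_{N}-\m{\Sigma}_{0,N})\m{\Sigma}_{0,N}^{-1/2}$, whose diagonal blocks vanish (both covariances share the marginals $\m{A}_{N},\m{B}_{N}$) and whose off-diagonal block I call $\tilde{\m{R}}_{N}$, the operator $\m{\Sigma}_{0,N}^{-1}\m{\Sigma}_{N}$ is similar to $\m{I}+\m{T}_{N}$, so $\tr(\m{\Sigma}_{0,N}^{-1}\m{\Sigma}_{N}) = d_{N}$, while the Schur-complement determinant identity for the block operator $\m{I}+\m{T}_{N}$ gives $\det(\m{\Sigma}_{0,N}^{-1}\m{\Sigma}_{N}) = \det(\m{I}-\tilde{\m{R}}_{N}^{*}\tilde{\m{R}}_{N})$. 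Therefore $\KL((\m{P}_{N})_{\#}\pi \| (\m{P}_{N})_{\#}\pi_{\indep}) = -\tfrac12\log\det(\m{I}-\tilde{\m{R}}_{N}^{*}\tilde{\m{R}}_{N})$.

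It remains to send $N \to \infty$. Because the $\m{P}_{N}^{(i)}$ commute with the marginals, the same computation as in the proof of Theorem~\ref{thm:feld:coup} identifies $\tilde{\m{R}}_{N} = \m{P}_{N}^{(1)}\tilde{\m{R}}\m{P}_{N}^{(2)}$ with $\tilde{\m{R}} := \m{U}_{\m{G}}\m{R}\m{U}_{\m{M}}^{*} \in \c{B}_{2}(\c{H})$; since $\c{R}(\tilde{\m{R}}) \subset \overline{\c{R}(\m{A})}$ and $\c{R}(\tilde{\m{R}}^{*}) \subset \overline{\c{R}(\m{B})}$, sandwiching the Hilbert--Schmidt operator $\tilde{\m{R}}$ between these SOT-convergent, uniformly bounded projections forces $\tilde{\m{R}}_{N} \to \tilde{\m{R}}$ in $\vertiii{\cdot}_{2}$, hence $\tilde{\m{R}}_{N}^{*}\tilde{\m{R}}_{N} \to \tilde{\m{R}}^{*}\tilde{\m{R}}$ in $\vertiii{\cdot}_{1}$. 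Writing $\{s_{j}\}$ for the singular values of $\m{R}$ (all $\le \vertiii{\m{R}}_{\infty} < 1$ and square-summable), the partial isometries $\m{U}_{\m{G}},\m{U}_{\m{M}}$ act isometrically on the relevant subspaces, so $\tilde{\m{R}}$ and $\m{R}$ have the same singular values, and $\det(\m{I}-\tilde{\m{R}}^{*}\tilde{\m{R}}) = \prod_{j}(1-s_{j}^{2}) = \det(\m{I}-\m{R}^{*}\m{R}) > 0$; thus continuity of the Fredholm determinant on $\c{B}_{1}(\c{H})$ gives $\log\det(\m{I}-\tilde{\m{R}}_{N}^{*}\tilde{\m{R}}_{N}) \to \log\det(\m{I}-\m{R}^{*}\m{R})$. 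On the measure-theoretic side, the $\sigma$-algebras $\sigma(\m{P}_{N})$ increase to the Borel $\sigma$-algebra on the common topological support $\overline{\c{R}(\m{\Sigma}_{\indep})}$ of $\pi$ and $\pi_{\indep}$ (equivalence of the measures entails the range identity $\c{R}(\m{\Sigma}^{1/2}) = \c{R}(\m{\Sigma}_{\indep}^{1/2})$), so the monotone-convergence property of relative entropy along increasing filtrations yields $\KL((\m{P}_{N})_{\#}\pi \| (\m{P}_{N})_{\#}\pi_{\indep}) \uparrow \KL(\pi \| \pi_{\indep})$. Combining the two limits gives $\KL(\pi \| \pi_{\indep}) = -\tfrac12\log\det(\m{I}-\m{R}^{*}\m{R})$, as claimed.

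The step I expect to be the main obstacle is the exchange of the limit with the relative entropy: it hinges on the approximating filtration generating the full Borel $\sigma$-algebra on the support and on every compression remaining nondegenerate, which is precisely why I would take spectral projections of the individual marginals $\m{A}$ and $\m{B}$ rather than arbitrary finite-rank projections---a careless compression can collapse $\m{\Sigma}_{\indep}$ and return an uninformative $+\infty$. A viable alternative is to write the Radon--Nikodym density $\rd\pi/\rd\pi_{\indep}$ explicitly via the Feldman--H\'ajek/Cameron--Martin calculus and integrate $\log(\rd\pi/\rd\pi_{\indep})$ against $\pi$; this is legitimate but brings in the Carleman--Fredholm determinant $\det_{2}$ and a measurable quadratic form, and reduces---through the cancellation $\det_{2}(\m{I}+\m{T}) = \det(\m{I}-\tilde{\m{R}}^{*}\tilde{\m{R}})$ produced by the $\pm s_{j}$ eigenvalue pairs of the self-adjoint block operator $\m{T}$---to the same computation, so the truncation route seems cleaner. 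Everything else (the finite-dimensional Gaussian entropy formula, Schur complements of $2\times 2$ block operators, trace-norm continuity of the Fredholm determinant) is routine.
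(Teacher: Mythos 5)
Your proof is correct and shares the same overall architecture as the paper's: a finite-dimensional reduction, the classical Gaussian KL formula with Schur complements, cancellation of the linear trace term due to vanishing diagonal blocks, and a limit passage via trace-norm continuity of the Fredholm determinant paired with monotone convergence of relative entropy along increasing filtrations. The genuine difference is in the choice of truncation. The paper truncates the singular spectrum of $\tilde{\m{R}} = \m{U}_{\m{G}}\m{R}\m{U}_{\m{M}}^{*}$, constructing auxiliary Gaussian measures $\pi_{n}$ whose cross-covariance retains only the first $n$ singular modes; these $\pi_{n}$ are not pushforwards of $\pi$, so the appeal to martingale convergence implicitly relies on the density $\rd\pi/\rd\pi_{\indep}$ factorizing along those singular modes. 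You instead compress by finite-rank spectral projections of the marginals $\m{A}$ and $\m{B}$ and work with the literal pushforwards $(\m{P}_{N})_{\#}\pi$, $(\m{P}_{N})_{\#}\pi_{\indep}$; this makes the filtration argument immediate, since $\KL\bigl((\m{P}_{N})_{\#}\pi \,\|\, (\m{P}_{N})_{\#}\pi_{\indep}\bigr)$ is exactly $\KL(\pi|_{\sigma(\m{P}_{N})} \,\|\, \pi_{\indep}|_{\sigma(\m{P}_{N})})$ and the $\sigma(\m{P}_{N})$ increase to the Borel $\sigma$-algebra on the common support $\overline{\c{R}(\m{A})}\times\overline{\c{R}(\m{B})}$. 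The trade-off is that you must verify the compressed covariances stay nondegenerate, which you handle cleanly via $\m{\Sigma} \succeq (1-\vertiii{\m{R}}_{\infty})\m{\Sigma}_{\indep}$ together with the commutation of the projections with the marginals, whereas the paper's singular-mode truncation is nondegenerate by construction. Both routes land on the identical Fredholm-determinant limit.
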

\begin{proof}
From \cref{thm:feld:coup}, it is immediate that $\KL(\pi \| \pi_{\indep}) = \infty$ if $\m{R} \notin \c{F}(\m{G}, \m{M})$, as the measures are mutually singular. 
Let $\m{R} \in \c{F}(\m{G}, \m{M})$. Consider the operator $\tilde{\m{R}} = \m{U}_{\m{G}} \m{R} \m{U}_{\m{M}}^* \in \c{B}_{2}(\c{H})$ constructed in the proof of \cref{thm:feld:coup}, which shares the same singular values $\{s_k\}_{k=1}^\infty$ as $\m{R}$, satisfying $s_{k} < 1$ and $\sum s_k^2 < \infty$.
We construct a sequence of finite-rank approximations. Let $\tilde{\m{R}}_n$ be the operator formed by the first $n$ singular components of $\tilde{\m{R}}$. We define the approximate Gaussian coupling $\pi_n = \c{N}(\m{0}, \m{\Sigma}_n)$ via the covariance:
\begin{align*}
    \m{\Sigma}_{n} := \left(
    \begin{array}{c|c}
    \m{A} & \m{A}^{1/2} \tilde{\m{R}}_n \m{B}^{1/2} \\
    \hline
    \m{B}^{1/2} \tilde{\m{R}}_n^* \m{A}^{1/2} & \m{B}
    \end{array}
    \right), \quad n \in \b{N}.
\end{align*}
The relative covariance operator $\m{T}_n := \m{\Sigma}_{\indep}^{\dagger/2} (\m{\Sigma}_{n} - \m{\Sigma}_{\indep}) \m{\Sigma}_{\indep}^{\dagger/2}$ takes the form $\left(\begin{smallmatrix} \m{0} & \tilde{\m{R}}_n \\ \tilde{\m{R}}_n^* & \m{0} \end{smallmatrix}\right)$.
Since $\m{T}_n$ is finite-rank, the standard Gaussian KL formula applies. Crucially, the non-zero eigenvalues of $\m{T}_n$ occur in pairs $\{\pm s_k\}_{k=1}^n$, causing the linear trace terms to cancel exactly ($\tr(\m{T}_n) = 0$). Thus:
\begin{align*}
    \KL(\pi_{n} \| \pi_{\indep}) &= \frac{1}{2} \left[ \tr(\m{T}_n) - \log \det(\m{I} + \m{T}_n) \right] \\
    &= -\frac{1}{2} \sum_{k=1}^n \left[ \log(1+s_k) + \log(1-s_k) \right] = -\frac{1}{2} \sum_{k=1}^n \log(1 - s_k^2).
\end{align*}
We now justify the limit $n \to \infty$. 
Since $\tilde{\m{R}}_n \to \tilde{\m{R}}$ in $\c{B}_2(\c{H})$, it follows that $\tilde{\m{R}}_n^* \tilde{\m{R}}_n \to \tilde{\m{R}}^* \tilde{\m{R}}$ in $\c{B}_1(\c{H})$. The Fredholm determinant $\m{A} \mapsto \det(\m{I} + \m{A})$ is continuous on $\c{B}_1(\c{H})$ \cite{gohberg2012traces}, so the RHS converges to $-\frac{1}{2} \log \det(\m{I} - \m{R}^* \m{R})$.
For the LHS, we invoke the martingale convergence of the Radon-Nikodym derivatives. The sequence of measures $\{\pi_n\}$ corresponds to the restriction of the full measure $\pi$ to the filtration generated by the first $n$ singular modes. By the monotonicity of the KL divergence and the martingale convergence theorem for densities of equivalent Gaussian measures \cite{bogachev1998gaussian}, we have:
\begin{align*}
    \KL(\pi \| \pi_{\indep}) = \lim_{n \to \infty} \KL(\pi_{n} \| \pi_{\indep}) = -\frac{1}{2} \log \det (\m{I} - \m{R}^* \m{R}).
\end{align*}
\end{proof}

\subsection{Spectral Shrinkage of Correlations}

The Gaussian EOT problem seeks to minimize the regularized transport cost:
\begin{align*}
    \c{W}_{2, \varepsilon} (\m{A}, \m{B})^{2} &= \inf_{\pi \in \Pi(\mu, \nu)} \left\{ \int_{\c{H} \times \c{H}} \|x-y\|^{2} \pi(\rd x \rd y) + 2 \varepsilon \KL(\pi \| \pi_{\indep}) \right\}, \quad \varepsilon > 0.
\end{align*}
Expanding the quadratic cost and the entropy term using the Green's correlation operator $\m{R}$, this is equivalent to maximizing the \emph{entropic profit}:
\begin{align}
    &\m{\Sigma}_{\varepsilon} = 
    \left(
    \begin{array}{c|c}
    \m{A} & \m{C}_{\varepsilon} \\
    \hline
    \m{C}_{\varepsilon}^{*} & \m{B}
    \end{array}
    \right) \in \c{B}_{1}^{+}(\c{H} \times \c{H}), \quad \m{C}_{\varepsilon} = \m{G} \m{R}_{\varepsilon} \m{M}^* \in \c{B}_{1}(\c{H}), \label{eq:opt:cross-cov} \\
    &\m{R}_{\varepsilon} := \argmax_{\m{R} \in \c{F}(\m{G}, \m{M})} \left\{ \underbrace{2 \tr [\m{R} (\m{M}^{*} \m{G})] + \varepsilon \log \det(\m{I} - \m{R}^* \m{R})}_{=:\c{P}_{\varepsilon}(\m{R})} \right\}. \label{eq:opt:corr}
\end{align}

\begin{theorem}\label{thm:spec:contrac}
Let $\varepsilon > 0$, $\m{A}, \m{B} \in \c{B}_{1}^{+}(\c{H})$, and let $\m{G} \in \c{G}(\m{A})$ and $\m{M} \in \c{G}(\m{B})$ be properly aligned Green's operators.
Then, the unique solution to \eqref{eq:opt:corr} is given by the universal spectral shrinkage:
\begin{align*}
    \m{R}_{\varepsilon} = f_{\varepsilon}(\m{G}^{*} \m{M}) \in \c{B}_{1}^{+}(\c{H}), \quad f_{\varepsilon}: x \in [0, \infty) \ \mapsto \ \frac{2x}{\sqrt{4x^{2}+\varepsilon^{2}}+\varepsilon} \in [0, 1).
\end{align*}
\end{theorem}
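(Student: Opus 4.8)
The plan is to diagonalize the objective $\c{P}_{\varepsilon}$ spectrally and reduce the operator optimization to a family of decoupled scalar problems. First I would invoke proper alignment: by hypothesis $\m{T} := \m{G}^{*}\m{M} \in \c{B}_{1}^{+}(\c{H})$, so it admits a spectral decomposition $\m{T} = \sum_{k} \lambda_{k} \m{\Pi}_{k}$ with $\lambda_{k} \ge 0$ (using the convention from the preliminaries, with $\lambda_{1}=0$, $\m{\Pi}_{1} = \m{\Pi}_{\c{N}(\m{T})}$ in the degenerate case). The key structural observation is that $\c{N}(\m{T}) \supset \c{N}(\m{M})$ and $\c{R}(\m{T}) \subset \overline{\c{R}(\m{G})} = \c{N}(\m{G})^{\perp}$, so any operator of the form $f(\m{T})$ with $f(0)=0$ automatically satisfies the geometric constraints defining $\c{F}(\m{G},\m{M})$, and in particular $f_{\varepsilon}(\m{T}) \in \c{F}(\m{G},\m{M})$ since $f_{\varepsilon}$ is bounded by $1$, continuous, vanishes at $0$, and $\m{T}$ is trace-class (hence $f_{\varepsilon}(\m{T})$ is Hilbert–Schmidt — indeed trace-class, since $f_{\varepsilon}(x) \le x/\varepsilon$ near $0$). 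So the candidate is admissible.

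For optimality I would argue in two stages. Stage one: reduce to the "diagonal" subproblem. Note $\tr[\m{R}\,\m{M}^{*}\m{G}] = \tr[\m{R}\m{T}^{*}] = \tr[\m{R}\m{T}]$ since $\m{T}$ is self-adjoint, and I want to show the maximizer commutes with $\m{T}$ (equivalently, with every $\m{\Pi}_{k}$). The cost $\c{P}_{\varepsilon}(\m{R}) = 2\tr[\m{R}\m{T}] + \varepsilon \log\det(\m{I}-\m{R}^{*}\m{R})$ is strictly concave in $\m{R}$ on the convex set $\c{F}(\m{G},\m{M})$ (the linear term plus $\varepsilon \log\det(\m{I}-\m{R}^{*}\m{R})$, which is strictly concave wherever $\vertiii{\m{R}}_{\infty}<1$ by concavity of $\log\det$ on positive operators and convexity of $\m{R}\mapsto \m{I}-\m{R}^{*}\m{R}$ in the appropriate sense — this needs a short justification). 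Given strict concavity, the maximizer is unique; then a symmetrization/averaging argument over the spectral projections shows it must be invariant, i.e. of the form $\m{R} = \sum_{k} r_{k} \m{\Pi}_{k}$ with $r_{k}\in[0,1)$, $r_{1}=0$. Concretely: replacing $\m{R}$ by $\sum_{k}\m{\Pi}_{k}\m{R}\m{\Pi}_{k}$ does not decrease $\tr[\m{R}\m{T}]$ (the off-diagonal blocks contribute nothing to $\tr[\m{R}\m{T}]$) and does not decrease $-\log\det(\m{I}-\m{R}^{*}\m{R})$ (pinching increases the determinant of $\m{I}-\m{R}^{*}\m{R}$, hence decreases $-\log\det$ — wait, one must check the sign: pinching $\m{R}^{*}\m{R}$ majorizes it downward in the convex order, so $\det(\m{I}-\cdot)$ increases, so $-\log\det$ decreases, which is the wrong direction). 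The cleaner route is: since the maximizer is unique and the functional is invariant under the unitary group generated by $\{\m{\Pi}_{k}\}$ (i.e. under $\m{R}\mapsto \m{W}\m{R}\m{W}^{*}$ for $\m{W} = \sum_{k} e^{i\theta_{k}}\m{\Pi}_{k}$), uniqueness forces $\m{R}_{\varepsilon}$ itself to be invariant, hence $\m{R}_{\varepsilon} = \sum_{k} r_{k}\m{\Pi}_{k}$. (One should double-check self-adjointness and nonnegativity of $r_{k}$ similarly, or get it for free from the scalar problem below.)

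Stage two: solve the scalar problem. On invariant operators the cost separates: $\c{P}_{\varepsilon}(\m{R}) = \sum_{k} d_{k}\,[\,2\lambda_{k} r_{k} + \varepsilon \log(1-r_{k}^{2})\,]$ where $d_{k}=\mathrm{rank}(\m{\Pi}_{k}) \in \b{N}\cup\{\infty\}$ (the $\log\det$ becomes $\sum_{k} d_{k}\log(1-r_{k}^{2})$ by multiplicativity of the Fredholm determinant over the orthogonal spectral blocks, with the $\lambda_{1}=0$ block contributing $r_{1}=0$ and nothing). Each summand is maximized independently by setting the derivative to zero: $2\lambda_{k} - 2\varepsilon r_{k}/(1-r_{k}^{2}) = 0$, i.e. $\lambda_{k} r_{k}^{2} + \varepsilon r_{k} - \lambda_{k} = 0$, whose unique root in $[0,1)$ is $r_{k} = (-\varepsilon + \sqrt{\varepsilon^{2}+4\lambda_{k}^{2}})/(2\lambda_{k}) = 2\lambda_{k}/(\sqrt{4\lambda_{k}^{2}+\varepsilon^{2}}+\varepsilon) = f_{\varepsilon}(\lambda_{k})$ (rationalizing the numerator); for $\lambda_{k}=0$ this reads $r_{k}=0=f_{\varepsilon}(0)$, consistent. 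Since each scalar map $r\mapsto 2\lambda r + \varepsilon\log(1-r^{2})$ is strictly concave on $(-1,1)$ with interior maximum, $f_{\varepsilon}(\lambda_{k})$ is the global maximizer, and assembling gives $\m{R}_{\varepsilon} = \sum_{k} f_{\varepsilon}(\lambda_{k})\m{\Pi}_{k} = f_{\varepsilon}(\m{G}^{*}\m{M}) \in \c{B}_{1}^{+}(\c{H})$. Uniqueness then follows from the strict concavity in Stage one, or directly from uniqueness of each scalar root.

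The main obstacle I anticipate is making the reduction to invariant operators fully rigorous in the infinite-dimensional, possibly infinitely-degenerate setting: justifying strict concavity of $\m{R}\mapsto \varepsilon\log\det(\m{I}-\m{R}^{*}\m{R})$ on $\c{F}(\m{G},\m{M})$ (one needs the Fredholm determinant to be well-defined and $\log$-concave along segments, using $\m{R}^{*}\m{R}\in\c{B}_{1}$ and $\vertiii{\m{R}}_{\infty}<1$), and confirming that the supremum is attained so that the Euler–Lagrange/uniqueness argument has a maximizer to act on — alternatively one can bypass attainment by directly verifying $\c{P}_{\varepsilon}(\m{R}_{\varepsilon}) \ge \c{P}_{\varepsilon}(\m{R})$ for all admissible $\m{R}$ via a pinching inequality plus the scalar comparison, which sidesteps compactness entirely. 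I would lean toward that direct-comparison route: show $\c{P}_{\varepsilon}(\m{R}) \le \sum_{k} d_{k}[2\lambda_{k} (\m{\Pi}_{k}\m{R}\m{\Pi}_{k}\text{-diagonal entry}) + \varepsilon\log(\cdots)] \le \sum_{k} d_{k}\,\c{P}^{\mathrm{scal}}_{\varepsilon}(f_{\varepsilon}(\lambda_{k})) = \c{P}_{\varepsilon}(\m{R}_{\varepsilon})$, where the first inequality is the pinching bound on $\log\det$ (Hadamard/Fischer-type: $\det(\m{I}-\m{R}^{*}\m{R}) \le \prod_{k}\det((\m{I}-\m{R}^{*}\m{R})|_{\c{R}(\m{\Pi}_{k})})$ is false in general, so instead use that $\log\det(\m{I}-\m{R}^{*}\m{R})$ is maximized over the unitary orbit $\{\m{W}\m{R}\m{W}^{*}\}$ at the pinched diagonal — this is the delicate point and likely requires invoking that $\log\det(\m{I}-\cdot)$ is concave together with Jensen over the averaging unitaries $\m{W}_{\theta}$), and the second is the scalar optimization. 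Getting this pinching step clean is where the real work lies.
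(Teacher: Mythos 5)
Your scalar step is correct, and your anticipated difficulties are real under your chosen route, but you missed a decisive simplification that the paper exploits: the entropic term $\log\det(\m{I}-\m{R}^{*}\m{R}) = \sum_{j}\log(1-s_{j}^{2})$ depends \emph{only} on the singular values $\{s_{j}\}$ of $\m{R}$, while the linear term $\tr[\m{R}\,\m{G}^{*}\m{M}]$ depends on singular vectors as well. The paper therefore fixes the singular values of an arbitrary admissible $\m{R}$, bounds the linear term by von Neumann's trace inequality (for fixed $\{s_{j}\}$, $\tr[\m{R}\,\m{G}^{*}\m{M}] \le \sum_{j} s_{j}\lambda_{j}$, with equality iff the singular vectors align with the eigenvectors of $\m{G}^{*}\m{M}$), and then solves the scalar problem exactly as you do. This is a pointwise comparison $\c{P}_{\varepsilon}(\m{R}) \le \c{P}_{\varepsilon}(\m{R}_{\varepsilon})$ valid for every admissible $\m{R}$; it requires neither an existence theorem, nor strict concavity, nor any pinching lemma, nor the averaging-over-unitaries argument. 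Your invariance route is sound in principle but, as you correctly flag, presupposes attainment of the supremum, and even after commutation you would still have to argue within each eigenspace (since $\m{R}_{\varepsilon}$ commuting with every $\m{\Pi}_{k}$ only gives block structure, not scalars on each block).

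One concrete error worth noting: your pinching attempt was abandoned due to a sign slip. You wrote that pinching ``does not decrease $-\log\det(\m{I}-\m{R}^{*}\m{R})$'' and concluded it moves in the wrong direction — but the objective $\c{P}_{\varepsilon}$ contains $+\varepsilon\log\det(\m{I}-\m{R}^{*}\m{R})$, not a minus sign (the minus appears in the KL divergence, which is being \emph{subtracted} once the cost is expanded). The pinching $\m{R}\mapsto\c{P}(\m{R}):=\sum_{k}\m{\Pi}_{k}\m{R}\m{\Pi}_{k}$ preserves $\tr[\m{R}\,\m{G}^{*}\m{M}]$ and satisfies $\c{P}(\m{R})^{*}\c{P}(\m{R})\preceq\c{P}(\m{R}^{*}\m{R})$, whence $\m{I}-\c{P}(\m{R})^{*}\c{P}(\m{R})\succeq\c{P}(\m{I}-\m{R}^{*}\m{R})$, and Fischer's inequality then gives $\det(\m{I}-\c{P}(\m{R})^{*}\c{P}(\m{R}))\ge\det(\c{P}(\m{I}-\m{R}^{*}\m{R}))\ge\det(\m{I}-\m{R}^{*}\m{R})$. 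So pinching does \emph{increase} the entropic term, which is the direction you want, and the route you discarded actually works — though it still only reduces to a block-diagonal (not scalar) form and is more effort than the von Neumann observation. The moral: once you notice the entropic term is a unitarily invariant function of $\m{R}^{*}\m{R}$, the problem cleanly decouples into ``align singular vectors (von Neumann)'' and ``optimize singular values (scalar calculus),'' and the delicate machinery you anticipated becomes unnecessary.
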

\begin{proof}
We derive the solution via explicit spectral construction.

\noindent \textbf{Step 1: Diagonalization via Alignment.} 
By the proper alignment condition, $\m{K} := \m{G}^{*} \m{M} = \m{M}^{*} \m{G} \in \c{B}_{1}^{+}(\c{H})$ is non-negative definite. By the Spectral Theorem for compact self-adjoint operators, $\m{K}$ admits the decomposition:
\begin{align*}
    \m{G}^{*} \m{M} = \sum_{j=1}^{\infty} \lambda_{j} (\m{e}_{j} \otimes \m{e}_{j}), \quad \lambda_1 \ge \lambda_2 \ge \dots > 0,
\end{align*}
where $\{\m{e}_j\}_{j=1}^\infty$ forms an orthonormal basis for $\overline{\c{R}(\m{K})} \subset \c{N}(\m{G})^\perp \cap \c{N}(\m{M})^\perp$.

\noindent \textbf{Step 2: Von Neumann Trace Inequality.} 
Consider the singular value decomposition of a candidate Green's correlation operator $\m{R} \in \c{F}(\m{G}, \m{M})$:
\begin{align*}
    \m{R} = \sum_{j=1}^{\infty} s_{j} (\m{g}_{j} \otimes \m{h}_{j}) : \m{h}_{j} \in \c{N}(\m{M})^{\perp} \mapsto s_{j} \m{g}_{j} \in \c{N}(\m{G})^{\perp},
\end{align*}
where $1 > s_{1} \ge s_{2} \ge \cdots > 0$. The objective functional splits into linear and entropic parts:
\begin{align*}
    \c{P}_{\varepsilon}(\m{R}) = 2 \tr(\m{R} \m{M}^* \m{G}) + \varepsilon \tr \log(\m{I} - \m{R}^* \m{R}) = 2 \tr(\m{R} \m{G}^* \m{M}) + \varepsilon \sum_{j=1}^{\infty} \log(1 - s_j^2),
\end{align*}
The entropic term depends strictly on the singular values $\{s_j\}$. By the von Neumann Trace Inequality, the linear term is maximized for fixed singular values if and only if the singular vectors align with the eigenvectors of $\m{G}^*\m{M}$, i.e., $\m{g}_j = \m{h}_j = \m{e}_j$. Thus, the optimal $\m{R}$ must be self-adjoint and share the spectral basis of $\m{G}^* \m{M}$.

\noindent \textbf{Step 3: Scalar Optimization.} 
With the basis fixed, the problem reduces to a scalar optimization for each eigenvalue $\lambda_j$:
\begin{align*}
    \sup_{\{s_j\}} \sum_{j=1}^{\infty} \left[ 2 s_j \lambda_j + \varepsilon \log(1 - s_j^2) \right].
\end{align*}
The first-order optimality condition for the function $g(s) = 2 s \lambda_j + \varepsilon \log(1-s^{2})$ yields:
\begin{align*}
    2 \lambda_j - \frac{2 \varepsilon s}{1-s^{2}} = 0 \quad \Rightarrow \quad \varepsilon s^2 + 2\lambda_j s - \varepsilon = 0.
\end{align*}
The unique positive root is given by $s_j = f_{\varepsilon}(\lambda_j)$. Since $f_{\varepsilon}$ maps $(0, \infty)$ strictly into $(0, 1)$, the singular values satisfy strict contractivity.

\noindent\textbf{Step 4: Verification of Constraints.} 
We verify that the constructed operator $\m{R}_{\varepsilon} = \sum f_\varepsilon(\lambda_j) (\m{e}_j \otimes \m{e}_j)$ belongs to the valid set $\c{F}(\m{G}, \m{M})$. First, the basis vectors $\{\m{e}_j\}$ span $\overline{\c{R}(\m{G}^* \m{M})}$, which is a subspace of $\c{N}(\m{G})^\perp \cap \c{N}(\m{M})^\perp$. Consequently, $\m{R}_{\varepsilon}$ vanishes on $\c{N}(\m{M})$ and maps into $\c{N}(\m{G})^\perp$.
Additionally, we observe that $f_\varepsilon(x) \le \frac{2x}{\varepsilon}$ and monotonely increasing for all $x \ge 0$. Therefore, $s_{j}$ is decreasing, and the trace norm is bounded by:
\begin{align*}
    \vertii{\m{R}_{\varepsilon}}_1 = \sum_{j=1}^\infty f_\varepsilon(\lambda_j) \le \frac{2}{\varepsilon} \sum_{j=1}^\infty \lambda_j = \frac{2}{\varepsilon} \tr(\m{G}^* \m{M}) < \infty.
\end{align*}
Since $\m{R}_{\varepsilon}$ is trace-class, it is necessarily Hilbert-Schmidt ($\m{R}_{\varepsilon} \in \c{B}_2(\c{H})$).
Thus, $\m{R}_{\varepsilon} = f_{\varepsilon}(\m{G}^* \m{M})$ is the unique valid maximizer.
\end{proof}

We emphasize a crucial regularity gain: while the optimization domain $\c{F}(\m{G}, \m{M})$ allows for general Hilbert-Schmidt operators, the optimal solution $\m{R}_{\varepsilon}$ is always an  n.n.d. \textbf{trace-class} operator for any $\varepsilon > 0$:
\begin{align*}
    \vertii{\m{R}_{\varepsilon}}_1 \le \frac{2}{\varepsilon} \vertii{(\m{A}^{1/2} \m{B} \m{A}^{1/2})^{1/2}}_1.
\end{align*}
Also, by symmetry, $\m{R}_{\varepsilon} \in \c{F}(\m{G}, \m{M}) \cap \c{F}(\m{M}, \m{G})$.
The behavior of the spectral shrinkage function $f_{\varepsilon}$ is illustrated below. As $\varepsilon \downarrow 0$, the function converges pointwise to the indicator function of the positive reals:
\begin{align*}
    f_{\varepsilon}(x) = \frac{2x}{\sqrt{4x^{2}+\varepsilon^{2}}+\varepsilon} \qquad \stackrel{\varepsilon \downarrow 0}{\longrightarrow} \qquad
    f_{0}(x) = \begin{cases}
        1 &, \quad x > 0, \\
        0 &, \quad x=0.
    \end{cases}
\end{align*}

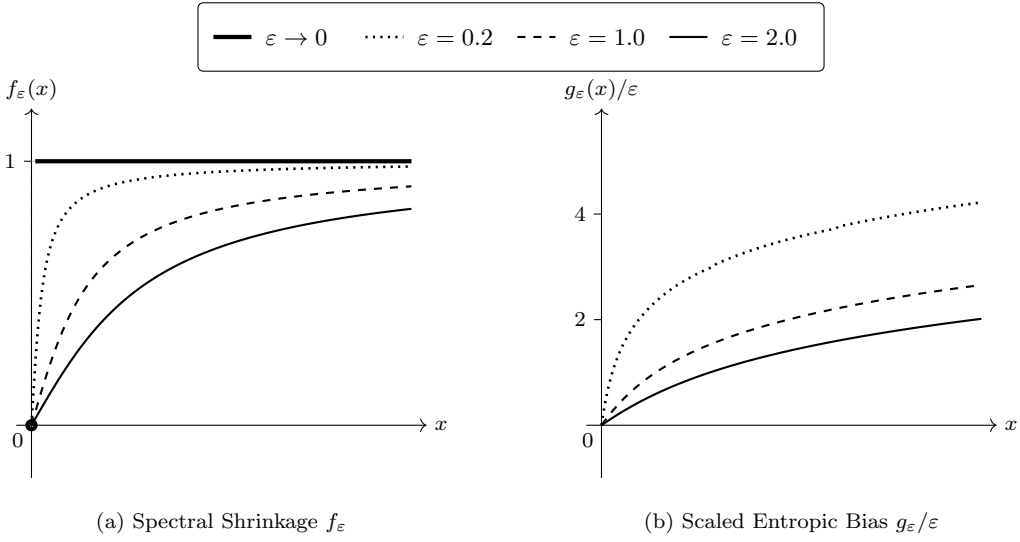
\begin{figure}[h!]
    \centering
    \begin{tikzpicture}
        \tikzset{
            declare function={
                fone(\y) = 2*\y / (sqrt(4*\y*\y + 1) + 1);
                H(\y) = 2*\y * (1 - fone(\y)) - ln(1 - (fone(\y))^2);
            },
            curve20/.style={thick, black},                  
            curve10/.style={thick, black, dashed},          
            curve02/.style={thick, black, dotted, line width=1pt}, 
            limit/.style={ultra thick, black}               
        }

        \begin{scope}[local bounding box=leftplot, x=1cm, y=3.5cm]
            
            \draw[->] (-0.2, 0) -- (5.2, 0) node[right] {$x$};
            \draw[->] (0, -0.2) -- (0, 1.2) node[above] {$f_{\varepsilon}(x)$};
            
            \node at (0,0) [below left] {$0$};
            \draw (0, 1) -- (-0.1, 1) node[left] {$1$};
            \draw[dashed, gray!50] (0,1) -- (5.0,1);

            \draw[curve20, domain=0:5.0, samples=100] plot (\x, {2*\x / (sqrt(4*\x*\x + 4) + 2)});
            \draw[curve10, domain=0:5.0, samples=100] plot (\x, {2*\x / (sqrt(4*\x*\x + 1) + 1)});
            \draw[curve02, domain=0:5.0, samples=200] plot (\x, {2*\x / (sqrt(4*\x*\x + 0.04) + 0.2)});
            
            \draw[limit] (0,0) circle (1.5pt); 
            \draw[limit] (0.05, 1.0) -- (5.0, 1.0); 
            \draw[dashed, black] (0,0) -- (0,1);

            \node[anchor=north] at (2.5, -0.3) {(a) Spectral Shrinkage $f_\varepsilon$};
        \end{scope}

        \begin{scope}[shift={(7.5,0)}, local bounding box=rightplot, x=1cm, y=0.7cm]
            
            \draw[->] (-0.2, 0) -- (5.2, 0) node[right] {$x$};
            \draw[->] (0, -1.0) -- (0, 6.0) node[above] {$g_{\varepsilon}(x)/\varepsilon$};
            
            \node at (0,0) [below left] {$0$};
            \foreach \y in {2, 4} {
                \draw (0, \y) -- (-0.1, \y) node[left] {$\y$};
            }

            \draw[curve20, domain=0:5.0, samples=100] plot (\x, {H(\x/2.0)});
            \draw[curve10, domain=0:5.0, samples=100] plot (\x, {H(\x/1.0)});
            \draw[curve02, domain=0:5.0, samples=200] plot (\x, {H(\x/0.2)});

            \node[anchor=north] at (2.5, -1.5) {(b) Scaled Entropic Bias $g_\varepsilon/\varepsilon$};
        \end{scope}

        \node[draw, fill=white, rounded corners=2pt, inner sep=5pt, anchor=south] at (current bounding box.north) {
            \begin{tikzpicture}[baseline, yscale=0.5]
                \draw[limit] (0, 0) -- (0.5, 0);
                \node[anchor=west] at (0.5, 0) {\small $\varepsilon \to 0$};
                
                \draw[curve02] (2.0, 0) -- (2.5, 0);
                \node[anchor=west] at (2.5, 0) {\small $\varepsilon=0.2$};
                
                \draw[curve10] (4.0, 0) -- (4.5, 0);
                \node[anchor=west] at (4.5, 0) {\small $\varepsilon=1.0$};
                
                \draw[curve20] (6.0, 0) -- (6.5, 0);
                \node[anchor=west] at (6.5, 0) {\small $\varepsilon=2.0$};
            \end{tikzpicture}
        };

    \end{tikzpicture}
    \caption{(a) The spectral shrinkage function $f_{\varepsilon}(x)$ for varying regularization strengths. As $\varepsilon \to 0$, the function approaches the indicator function $f_{0}(x) = I(x>0)$, forcing the correlation of all positive eigenmodes to 1, whereas larger $\varepsilon$ dampens the correlations, effectively blurring the transport plan. (b) The logarithmic divergence of the scaled entropic bias $g_{\varepsilon}(x)/\varepsilon = g_{1}(x/\varepsilon) \approx \log (x/\varepsilon)$ (see \cref{lem:conv:ftn:mono}) near the origin highlights that the local rate of convergence deteriorates for small singular values. This singularity drives the slower convergence rates in infinite-dimensional settings.}
    \label{fig:spectral_shrinkage}
\end{figure}

\begin{corollary}\label{cor:conv:dist}
Let $\varepsilon > 0$, $\m{A}, \m{B} \in \c{B}_{1}^{+}(\c{H})$. Then,
\begin{align*}
    \c{W}_{2, \varepsilon} (\m{A}, \m{B})^{2} - \c{W}_{2} (\m{A}, \m{B})^{2} = \tr [g_{\varepsilon}((\m{A}^{1/2} \m{B} \m{A}^{1/2})^{1/2})],
\end{align*}
where the entropic bias function is given by $g_{\varepsilon}(x) := \left[ 2 x (1-f_{\varepsilon}(x)) - \varepsilon \log (1- f_{\varepsilon}(x)^{2}) \right]$ for $\varepsilon > 0$ and $x \ge 0$.
In particular:
\begin{enumerate}[leftmargin = *]
\item If $\mathrm{rank} (\m{A}^{1/2} \m{B}^{1/2}) < \infty$, then
\begin{align*}
    \lim_{\varepsilon \downarrow 0} \frac{\c{W}_{2, \varepsilon} (\m{A}, \m{B})^{2} - \c{W}_{2} (\m{A}, \m{B})^{2}}{\varepsilon \log (1/\varepsilon)} = \mathrm{rank} (\m{A}^{1/2} \m{B}^{1/2}).
\end{align*}
\item If the singular values $s_{k}$ of $\m{A}^{1/2} \m{B}^{1/2} \in \c{B}_{1}^{+}(\c{H})$ decay polynomially as $s_{k} = c \, k^{-\alpha}$ for some $c > 0$ and $\alpha > 1$, then the exact asymptotic convergence rate is:
\begin{align*}
    \lim_{\varepsilon \downarrow 0} \frac{\c{W}_{2, \varepsilon} (\m{A}, \m{B})^{2} - \c{W}_{2} (\m{A}, \m{B})^{2}}{\varepsilon^{1-1/\alpha}} = c^{1/\alpha} \int_{0}^{\infty} g_{1} (t^{-\alpha}) \, \rd t \in (0, \infty).
\end{align*}
\end{enumerate}
\end{corollary}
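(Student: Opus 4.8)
The plan is to reduce $\c{W}_{2,\varepsilon}(\m{A},\m{B})^{2}-\c{W}_{2}(\m{A},\m{B})^{2}$ to an explicit spectral series via \cref{thm:spec:contrac}, and then to read off the two rates from the self-similarity $f_{\varepsilon}(x)=f_{1}(x/\varepsilon)$ of the shrinkage profile. For the identity, fix a properly aligned pair $\m{G}\in\c{G}(\m{A})$, $\m{M}\in\c{G}(\m{B})$ (such a pair exists by the discussion around \eqref{eq:W2:bdd:HS}) and diagonalize $\m{K}:=\m{G}^{*}\m{M}=\sum_{j}\lambda_{j}(\m{e}_{j}\otimes\m{e}_{j})\in\c{B}_{1}^{+}(\c{H})$, whose eigenvalues (with multiplicity) coincide with those of $(\m{A}^{1/2}\m{B}\m{A}^{1/2})^{1/2}$. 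Expanding the quadratic cost and the entropy through the Green's correlation operator as in \eqref{eq:opt:corr} shows every admissible $\pi$ has regularized cost $\tr[\m{A}]+\tr[\m{B}]-\c{P}_{\varepsilon}(\m{R})$, so $\c{W}_{2,\varepsilon}(\m{A},\m{B})^{2}=\tr[\m{A}]+\tr[\m{B}]-\c{P}_{\varepsilon}(\m{R}_{\varepsilon})$ with $\m{R}_{\varepsilon}=f_{\varepsilon}(\m{K})$ by \cref{thm:spec:contrac}, hence $\c{P}_{\varepsilon}(\m{R}_{\varepsilon})=\sum_{j}[2\lambda_{j}f_{\varepsilon}(\lambda_{j})+\varepsilon\log(1-f_{\varepsilon}(\lambda_{j})^{2})]$, while $\c{W}_{2}(\m{A},\m{B})^{2}=\tr[\m{A}]+\tr[\m{B}]-2\sum_{j}\lambda_{j}$. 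The series $\sum_{j}\lambda_{j}$, $\sum_{j}f_{\varepsilon}(\lambda_{j})$ (trace-class by \cref{thm:spec:contrac}) and $\sum_{j}\log(1-f_{\varepsilon}(\lambda_{j})^{2})$ converge absolutely, so rearranging gives
\[
\c{W}_{2,\varepsilon}(\m{A},\m{B})^{2}-\c{W}_{2}(\m{A},\m{B})^{2}=\sum_{j}\bigl[2\lambda_{j}(1-f_{\varepsilon}(\lambda_{j}))-\varepsilon\log(1-f_{\varepsilon}(\lambda_{j})^{2})\bigr]=\sum_{j}g_{\varepsilon}(\lambda_{j}),
\]
which, since $g_{\varepsilon}\ge0$ and $g_{\varepsilon}(0)=0$, equals $\tr[g_{\varepsilon}((\m{A}^{1/2}\m{B}\m{A}^{1/2})^{1/2})]$; it is finite because $\c{W}_{2,\varepsilon}^{2}<\infty$.

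Both asymptotics then follow from $g_{\varepsilon}(x)=\varepsilon\,g_{1}(x/\varepsilon)$ together with the elementary behavior of the increasing profile $g_{1}$ (read off its closed form, see \cref{lem:conv:ftn:mono}): $g_{1}(y)=2y+O(y^{2})$ as $y\downarrow0$ and $g_{1}(y)=\log y+1+o(1)$ as $y\to\infty$. In case (1), only $r=\mathrm{rank}(\m{A}^{1/2}\m{B}^{1/2})=\mathrm{rank}(\m{A}^{1/2}\m{B}\m{A}^{1/2})$ eigenvalues $\lambda_{1},\dots,\lambda_{r}$ are nonzero, so $\tr[g_{\varepsilon}(\cdot)]=\sum_{j=1}^{r}\varepsilon g_{1}(\lambda_{j}/\varepsilon)$; the large-argument expansion gives $\varepsilon g_{1}(\lambda_{j}/\varepsilon)=\varepsilon\log(1/\varepsilon)+\varepsilon(1+\log\lambda_{j})+o(\varepsilon)$, whence dividing by $\varepsilon\log(1/\varepsilon)$ and summing the $r$ terms yields the limit $r$.

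In case (2), put $\delta:=\varepsilon^{1/\alpha}$, so $\lambda_{k}/\varepsilon=ck^{-\alpha}/\varepsilon=c(k\delta)^{-\alpha}$, and with $\phi(t):=g_{1}(ct^{-\alpha})$ one has $\tr[g_{\varepsilon}(\cdot)]=\varepsilon\sum_{k\ge1}g_{1}(c(k\delta)^{-\alpha})=\delta^{\alpha-1}\cdot\delta\sum_{k\ge1}\phi(k\delta)$; since $\varepsilon^{1-1/\alpha}=\delta^{\alpha-1}$, the statement reduces to the Riemann-sum limit $\delta\sum_{k\ge1}\phi(k\delta)\to\int_{0}^{\infty}\phi(t)\,\rd t$ as $\delta\downarrow0$, and the substitution $t=c^{1/\alpha}u$ identifies this integral with $c^{1/\alpha}\int_{0}^{\infty}g_{1}(u^{-\alpha})\,\rd u$. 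Because $g_{1}$ is increasing, $\phi$ is positive and decreasing, so the monotone squeeze $\int_{\delta}^{\infty}\phi(t)\,\rd t\le\delta\sum_{k\ge1}\phi(k\delta)\le\int_{0}^{\infty}\phi(t)\,\rd t$ gives the limit once $0<\int_{0}^{\infty}\phi<\infty$: near $0$ one has $\phi(t)\sim\log c-\alpha\log t\in L^{1}_{\mathrm{loc}}$, near $\infty$ one has $\phi(t)\sim2c\,t^{-\alpha}$, integrable exactly because $\alpha>1$, and positivity of the integral is clear from $\phi>0$ on $(0,\infty)$.

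The identity is essentially bookkeeping once \cref{thm:spec:contrac} is available, so the main obstacle is case (2): one must control $g_{1}$ at \emph{both} ends of its domain sharply enough to obtain convergence — not merely boundedness — of the rescaled partial sums, and the hypothesis $\alpha>1$ enters, and is sharp, precisely through integrability of the tail $\phi(t)\sim2c\,t^{-\alpha}$ at $t\to\infty$; the logarithmic blow-up of $g_{1}$ at infinity is harmless, producing only an $L^{1}_{\mathrm{loc}}$ singularity of $\phi$ at the origin. The only other point to dispatch is the absolute convergence (and hence free rearrangement) of the three spectral series in the identity, which follows from trace-class-ness of $\m{R}_{\varepsilon}$ and $f_{\varepsilon}(\lambda_{j})\to0$.
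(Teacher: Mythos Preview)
Your proof is correct and follows essentially the same route as the paper: derive the spectral identity from \cref{thm:spec:contrac} and the eigenvalue correspondence $\sigma(\m{G}^{*}\m{M})=\sigma((\m{A}^{1/2}\m{B}\m{A}^{1/2})^{1/2})$, then exploit the scaling $g_{\varepsilon}(x)=\varepsilon g_{1}(x/\varepsilon)$ together with the endpoint asymptotics of $g_{1}$ from \cref{lem:conv:ftn:mono} to read off the finite-rank rate directly and the polynomial-decay rate via a Riemann-sum argument with integrability of $t\mapsto g_{1}(t^{-\alpha})$ checked at both $0$ and $\infty$. The only cosmetic difference is your explicit monotone squeeze for the Riemann sum and the extra substitution $t=c^{1/\alpha}u$, versus the paper's direct choice of mesh $u_{k}=k(\varepsilon/c)^{1/\alpha}$; these are equivalent reparametrizations.
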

\begin{proof}
The formula follows directly from \cref{thm:spec:contrac} and $\sigma((\m{A}^{1/2} \m{B} \m{A}^{1/2})^{1/2}) = \sigma(\m{G}^* \m{M})$, which are the singular values of $\m{A}^{1/2} \m{B}^{1/2} \in \c{B}_{1}^{+}(\c{H})$. The difference is given by
\begin{align*}
    \c{W}_{2, \varepsilon} (\m{A}, \m{B})^{2} - \c{W}_{2} (\m{A}, \m{B})^{2} = \sum_{k=1}^{\infty} g_{\varepsilon}(s_{k}).
\end{align*}
Using the scaling property $f_{\varepsilon}(x) = f_{1}(x/\varepsilon)$, we can rewrite $g_{\varepsilon}(x) = \varepsilon g_{1}(x/\varepsilon)$, and thus
\begin{align*}
    \frac{\c{W}_{2, \varepsilon} (\m{A}, \m{B})^{2} - \c{W}_{2} (\m{A}, \m{B})^{2}}{\varepsilon} = \sum_{k=1}^{\infty} g_{1} \left( \frac{s_k}{\varepsilon} \right).
\end{align*}
\begin{enumerate}[leftmargin = *]
\item If $R = \mathrm{rank} (\m{A}^{1/2} \m{B} \m{A}^{1/2}) = \mathrm{rank} (\m{A}^{1/2} \m{B}^{1/2}) < \infty$, the sum consists of $R$ terms. As $\varepsilon \downarrow 0$, the argument $y_k = s_k/\varepsilon \to \infty$. By \cref{lem:conv:ftn:mono}, $g_1(y_k) = \log(y_k) + 1 + o(1)$. Therefore:
\begin{align*}
    \sum_{k=1}^{R} g_{1} \left( \frac{s_k}{\varepsilon} \right) = \sum_{k=1}^{R} \left[ \log(s_k) + \log(1/\varepsilon) + 1 + o(1) \right] = R \log(1/\varepsilon) + \sum_{k=1}^{R} \log(s_k) + R + o(1).
\end{align*}
Dividing by $\log(1/\varepsilon)$ and taking the limit yields $R$.

\item If $\mathrm{rank} (\m{A}^{1/2} \m{B} \m{A}^{1/2}) = \infty$ and $s_k = c k^{-\alpha}$, let $u_k := k (\varepsilon/c)^{1/\alpha}$. Then $k = u_k (c/\varepsilon)^{1/\alpha}$ and the mesh size is $\Delta u = (\varepsilon/c)^{1/\alpha}$. 
Substituting $k$ back into the sum, we have:
\begin{align*}
    \frac{\c{W}_{2, \varepsilon}^2 - \c{W}_{2}^2}{\varepsilon^{1 - 1/\alpha}} 
    = \varepsilon^{1/\alpha} \sum_{k=1}^{\infty} g_{1} \left( \left[ k \left( \frac{\varepsilon}{c} \right)^{1/\alpha} \right]^{-\alpha} \right) = c^{1/\alpha} \sum_{k=1}^{\infty} g_{1}(u_k^{-\alpha})  \Delta u.
\end{align*}
Let $h(t) := g_1(t^{-\alpha})$. Since $g_1$ is monotone increasing by \cref{lem:conv:ftn:mono}, $h(t)$ is monotone decreasing. 
\begin{itemize}[leftmargin = *]
    \item \textbf{As $t \to 0$:} The argument $t^{-\alpha} \to \infty$. By \cref{lem:conv:ftn:mono}, $h(t) \sim \log(t^{-\alpha}) = -\alpha \log t$. Since $\int_0^1 |\log t| \rd t < \infty$, $h$ is integrable at 0.
    \item \textbf{As $t \to \infty$:} The argument $t^{-\alpha} \to 0$. Since $h(t) \sim 2 t^{-\alpha}$ by \cref{lem:conv:ftn:mono} and $\alpha > 1$, $\int_1^\infty t^{-\alpha} \rd t < \infty$.
\end{itemize}
Because $h(t)$ is integrable on $(0, \infty)$, the Riemann sum converges to the integral:
\begin{align*}
    \lim_{\varepsilon \downarrow 0} c^{1/\alpha} \sum_{k=1}^{\infty} h(u_k) \Delta u = c^{1/\alpha} \int_{0}^{\infty} g_{1}(t^{-\alpha}) \, \rd t.
\end{align*}
\end{enumerate}
\end{proof}

In the finite-dimensional setting $\c{H} = \b{R}^{d}$, our result recovers the our result recovers the well-known bias scaling $d \varepsilon \log (1/\varepsilon)$ observed in various Gaussian and non-Gaussian setups \cite{pal2019difference, carlier2023convergence, genevay2019sample, janati2020entropic}. In contrast, to the best of our knowledge, the explicit non-parametric rate in a strictly infinite-dimensional setting that links spectral decay to entropic bias is new in the literature.

\begin{example}[Integrated Brownian Motions]
Let $\c{H} = L_{2}[0,1]$ and $n \in \b{N}$. Consider the unbounded differential operator $\m{D}^{n} f = f^{(n)}$ equipped with the boundary conditions $f(0) = f'(0) = \cdots = f^{(n-1)}(0) = 0$. The Green's function $G_{n}: [0, 1] \times [0, 1] \to \b{R}$ is defined as the distributional solution to $\m{D}^{n} G_{n} = \delta$ \cite{hall2013quantum, conway2019course}. This relationship identifies $\m{D}^{n}$ as the pseudoinverse of the integral operator $\m{T}_{G_{n}}$:
\begin{align*}
    \m{T}_{G_{n}} f(s) = \int_{0}^{1} G_{n}(s, t) f(t) \rd t \implies \m{D}^{n} \m{T}_{G_{n}} f(s)= \int_{0}^{1} \delta(s, t) f(t) \rd t = f(s), \quad f \in \c{H}.
\end{align*}
The range of this operator $\c{R}(\m{T}_{G_{n}})$ coincides with the Cameron-Martin space of the $n$-th integrated Brownian motion (IBM) \cite{da2006introduction, hairer2009introduction}. Specifically, the covariance kernel of the $n$-th IBM is given by
\begin{align*}
    K_{n}(s, t) = \int_{0}^{1} G_{n}(s, u) G_{n}(u, t) \rd u, \quad s, t \in [0, 1].
\end{align*}
Consequently, $\text{Dom}(\m{D}^{n}) = \c{R}(\m{G}_{n})$ forms an RKHS with reproducing kernel $K_{n}$. Identifying the Green's operator as $\m{G}_{n} := \m{T}_{G_{n}}$, we observe that $\m{G}_{n} \in \c{G}(\m{A}_{n})$ where $\m{A}_{n} := \m{T}_{K_{n}}$ is the covariance operator. Explicitly, $\m{G}_{n}$ is the $n$-th order Volterra operator \cite{gohberg1978introduction, paulsen2016introduction}:
\begin{align*}
    \m{G}_{n} f (s) = \int_{0}^{s} \frac{(s-t)^{n-1}}{(n-1)!} f(t) \rd t,
\end{align*}
whose singular values exhibit the sharp polynomial decay $s_{k}(\m{G}_{n}) \asymp (\pi k)^{-n}$.
It is established in \cite{yun2025gaussian} that the pair $\m{G}_{n}$ and $\m{G}_{m}$ is properly aligned for any $n, m \in \b{N}$, satisfying the identity:
\begin{align*}
    \innpr{\m{G}_{n} f}{\m{G}_{m} f} = 
    \begin{cases}
        \left\| \m{G}_{(n+m)/2} f \right\|^{2} &, \quad n+m \text{ is even}, \\
        \frac{1}{2} \left\| \m{G}_{(n+m+1)/2} f \right\|^{2} &, \quad n+m \text{ is odd}.
    \end{cases}
\end{align*}
This identity implies that, for $n$-th and $m$-th IBMs, we can set the spectral decay parameter as $\alpha = 2 \lceil (n+m)/2 \rceil$. Then, we apply \cref{cor:conv:dist} to determine the exact asymptotic convergence rate of the entropic bias:
\begin{align*}
    \lim_{\varepsilon \downarrow 0} \frac{\c{W}_{2, \varepsilon} (\m{A}_{n}, \m{A}_{m})^{2} - \c{W}_{2} (\m{A}_{n}, \m{A}_{m})^{2}}{\varepsilon^{1-1/\alpha}} \asymp \int_{0}^{\infty} g_{1} (t^{-\alpha}) \rd t.
\end{align*}
This reveals an infinite-dimensional phenomenon: the convergence rate depends on the smoothness $\alpha$ of the interacting processes.
\end{example}

\begin{corollary}\label{cor:ortho:indep}
Let $\m{A}, \m{B} \in \c{B}_{1}^{+}(\c{H})$. If $\m{A} \m{B} = \m{0}$, $\pi_{\indep}$ is an optimal EOT coupling for any $\varepsilon > 0$.
\end{corollary}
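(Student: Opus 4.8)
The plan is to specialize \cref{thm:spec:contrac} to the case where the interaction operator $\m{G}^{*}\m{M}$ collapses to zero. As already noted in the preliminaries, the hypothesis $\m{A}\m{B}=\m{0}$ forces $\m{G}^{*}\m{M}=\m{0}$ for \emph{any} choice of Green's operators $\m{G}\in\c{G}(\m{A})$ and $\m{M}\in\c{G}(\m{B})$: from $\m{M}\m{M}^{*}=\m{B}$ we have $\c{R}(\m{M})\subseteq\overline{\c{R}(\m{B})}$, from $\m{A}\m{B}=\m{0}$ we have $\overline{\c{R}(\m{B})}\subseteq\c{N}(\m{A})=\c{N}(\m{A}^{1/2})=\c{N}(\m{G}^{*})$, and combining these gives $\m{G}^{*}\m{M}=\m{0}$. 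In particular such a pair is (vacuously) properly aligned, so \cref{thm:spec:contrac} applies.

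Next I would evaluate the spectral shrinkage formula at the zero operator. Since $f_{\varepsilon}(0)=0$ and $f_{\varepsilon}$ is continuous at $0$, the functional calculus yields $\m{R}_{\varepsilon}=f_{\varepsilon}(\m{G}^{*}\m{M})=f_{\varepsilon}(\m{0})=\m{0}$. It remains only to note that $\m{0}$ is an admissible competitor, i.e. $\m{0}\in\c{F}(\m{G},\m{M})$: the zero operator is Hilbert-Schmidt, annihilates all of $\c{H}$ (so in particular $\c{N}(\m{0})\supseteq\c{N}(\m{M})$), has range $\{\m{0}\}\subseteq\c{N}(\m{G})^{\perp}$, and satisfies $\vertiii{\m{0}}_{\infty}=0<1$. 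Hence $\m{R}_{\varepsilon}=\m{0}$ is the unique maximizer of the entropic profit $\c{P}_{\varepsilon}$ over $\c{F}(\m{G},\m{M})$.

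Finally I would translate this back to couplings: the optimal cross-covariance of \eqref{eq:opt:cross-cov} is $\m{C}_{\varepsilon}=\m{G}\m{R}_{\varepsilon}\m{M}^{*}=\m{0}$, so $\m{\Sigma}_{\varepsilon}=\mathrm{diag}(\m{A},\m{B})=\m{\Sigma}_{\indep}$, i.e. $\pi_{\varepsilon}=\pi_{\indep}$. By the strict convexity of $\pi\mapsto\KL(\pi\|\pi_{\indep})$ on $\Pi(\mu,\nu)$ recorded at the start of this section, the EOT minimizer is unique, so $\pi_{\indep}$ is \emph{the} optimal EOT coupling for every $\varepsilon>0$.

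The argument has no real obstacle: the corollary is an immediate consequence of the main spectral shrinkage theorem once one observes that orthogonality of the covariances trivializes the interaction term. The only points needing a line of verification — propagation of $\m{A}\m{B}=\m{0}$ to $\m{G}^{*}\m{M}=\m{0}$ for arbitrary Green's operators, and membership of the zero operator in $\c{F}(\m{G},\m{M})$ — are entirely routine.
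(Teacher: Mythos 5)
Your proof is correct and follows essentially the same route as the paper: both establish $\m{G}^{*}\m{M}=\m{0}$ from the range-orthogonality $\overline{\c{R}(\m{B})}\subseteq\c{N}(\m{A})=\c{N}(\m{G}^{*})$ (the paper phrases this as an inner-product computation $\innpr{\m{M}x}{\m{G}y}=0$), observe that this makes any pair of Green's operators vacuously properly aligned, and then apply $f_{\varepsilon}(0)=0$ to conclude $\m{R}_{\varepsilon}=\m{0}$ and hence $\m{\Sigma}_{\varepsilon}=\m{\Sigma}_{\indep}$. Your extra checks (membership of $\m{0}$ in $\c{F}(\m{G},\m{M})$, uniqueness via strict convexity) are harmless elaborations of points the paper leaves implicit.
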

\begin{proof}
By the orthogonality of the ranges ($\c{R}(\m{B}) \subset \c{N}(\m{A}) = \c{R}(\m{A})^\perp$), it holds that $\innpr{\m{G}^* \m{M} x}{y} = \innpr{\m{M} x}{\m{G} y} = 0$ for all $x, y \in \c{H}$. Thus, $\m{G}^* \m{M} = \m{0}$. Since $f_{\varepsilon}(0) = 0$, the optimal correlation is $\m{R}_{\varepsilon} = \m{0}$, yielding the block-diagonal covariance $\m{\Sigma}_{\indep}$.
\end{proof}

Indeed, as the Gaussian EOT problem is strictly convex, the solution should be unique. We formally verify that the derived covariance $\m{\Sigma}_{\varepsilon}$ is intrinsic to the marginals and independent of the choice of Green's operators.

\begin{proposition}[Uniqueness and Invariance of the Coupling]\label{prop:EOT:invar}
Let $\varepsilon > 0$ and $\m{A}, \m{B} \in \c{B}_{1}^{+}(\c{H})$. The covariance $\m{\Sigma}_{\varepsilon}$ defined by \eqref{eq:opt:corr} is independent of the specific choice of properly aligned Green's operators. Consequently, the optimal Gaussian EOT coupling is unique.
\end{proposition}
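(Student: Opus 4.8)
The plan is to deduce the invariance of $\m{\Sigma}_{\varepsilon}$ from the uniqueness of the EOT minimiser — already established abstractly at the start of Section~3 — rather than by a direct algebraic comparison of the two spectral constructions. First I would record that for \emph{any} properly aligned pair $(\m{G},\m{M})$ the entropic cost can be rewritten, combining the Baker--Douglas parametrisation, the KL formula, and the reduction to centred Gaussian couplings, as
\[
    \c{W}_{2,\varepsilon}(\m{A},\m{B})^{2} = \tr[\m{A}] + \tr[\m{B}] - \sup_{\m{R}\in\c{F}(\m{G},\m{M})} \c{P}_{\varepsilon}(\m{R}),
\]
since the covariance of such a coupling is $\m{\Sigma}=\left(\begin{smallmatrix}\m{A} & \m{G}\m{R}\m{M}^{*}\\ \m{M}\m{R}^{*}\m{G}^{*} & \m{B}\end{smallmatrix}\right)$ for a unique Green's correlation $\m{R}$, the cost equals $+\infty$ unless $\m{R}\in\c{F}(\m{G},\m{M})$, and expanding $\int\|x-y\|^{2}\,\rd\pi = \tr[\m{A}]+\tr[\m{B}]-2\tr[\m{R}\m{M}^{*}\m{G}]$ together with $2\varepsilon\,\KL(\pi\|\pi_{\indep}) = -\varepsilon\log\det(\m{I}-\m{R}^{*}\m{R})$ yields exactly $-\c{P}_{\varepsilon}(\m{R})$. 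I would also note that $\c{F}(\m{G},\m{M})$ is never empty (it contains $\m{R}=\m{0}$), so the supremum is over a non-empty set.

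Next I would invoke \cref{thm:spec:contrac}: because $(\m{G},\m{M})$ is properly aligned, this supremum is attained, uniquely, at $\m{R}_{\varepsilon}=f_{\varepsilon}(\m{G}^{*}\m{M})$, so the Gaussian measure $\pi_{\varepsilon}=N(\m{0},\m{\Sigma}_{\varepsilon})$ built from it attains the infimum defining $\c{W}_{2,\varepsilon}(\m{A},\m{B})^{2}$; that is, $\pi_{\varepsilon}$ is an optimal EOT coupling. Since $\pi\mapsto \int\|x-y\|^{2}\,\rd\pi + 2\varepsilon\,\KL(\pi\|\pi_{\indep})$ is strictly convex on $\Pi(\mu,\nu)$ (the transport term is linear and $\KL(\cdot\|\pi_{\indep})$ is strictly convex where finite and $+\infty$ elsewhere), the EOT problem admits at most one minimiser. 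Hence, given two properly aligned pairs $(\m{G},\m{M})$ and $(\m{G}',\m{M}')$ with associated covariances $\m{\Sigma}_{\varepsilon}$ and $\m{\Sigma}_{\varepsilon}'$, the measures $N(\m{0},\m{\Sigma}_{\varepsilon})$ and $N(\m{0},\m{\Sigma}_{\varepsilon}')$ are both optimal, hence equal; as a centred Gaussian measure determines its covariance, $\m{\Sigma}_{\varepsilon}=\m{\Sigma}_{\varepsilon}'$. This simultaneously gives invariance of the construction and uniqueness of the optimal Gaussian EOT coupling.

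I expect no genuine obstacle along this route: the only care required is in the bookkeeping of the reduction to Gaussian couplings (already carried out in the text) and in checking non-emptiness of $\c{F}(\m{G},\m{M})$ so that the value of the supremum is as stated. A more self-contained but heavier alternative would be a direct verification: writing $\m{G}'=\m{G}\m{V}$ and $\m{M}'=\m{M}\m{W}$ for partial isometries $\m{V},\m{W}$ (which exist since $\m{G}\m{G}^{*}=\m{G}'\m{G}'^{*}$ and $\m{M}\m{M}^{*}=\m{M}'\m{M}'^{*}$ force $\|\m{G}^{*}h\|=\|\m{G}'^{*}h\|$ and $\|\m{M}^{*}h\|=\|\m{M}'^{*}h\|$), so that $\m{G}'^{*}\m{M}'=\m{V}^{*}(\m{G}^{*}\m{M})\m{W}$, and the claim reduces to the functional-calculus identity $\m{G}\m{V}\,f_{\varepsilon}\!\bigl(\m{V}^{*}(\m{G}^{*}\m{M})\m{W}\bigr)\,\m{W}^{*}\m{M}^{*} = \m{G}\,f_{\varepsilon}(\m{G}^{*}\m{M})\,\m{M}^{*}$. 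The delicate point there is that neither $\m{G}\m{M}^{*}$ nor the individual compressions behave well, and one must show that the \emph{joint} positivity constraints $\m{G}^{*}\m{M}\succeq\m{0}$ and $\m{V}^{*}(\m{G}^{*}\m{M})\m{W}\succeq\m{0}$ force $\m{V}$ and $\m{W}$ to act as the same isometry on $\overline{\c{R}(\m{G}^{*}\m{M})}$, so that $\m{V}^{*}(\m{G}^{*}\m{M})\m{W}$ is an honest unitary compression of $\m{G}^{*}\m{M}$ and the functional calculus transports cleanly; I would only pursue this if a purely algebraic argument were explicitly wanted.
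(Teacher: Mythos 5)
Your argument is correct, and it takes a genuinely different route from the paper. The paper's proof is a direct algebraic verification: it writes the two properly aligned pairs as $\tilde{\m{G}} = \m{G} \m{V}_{\m{A}}$, $\tilde{\m{M}} = \m{M} \m{V}_{\m{B}}$ for partial isometries, shows from the joint positivity of $\m{P} = \m{G}^{*}\m{M}$ and $\tilde{\m{P}} = \m{V}_{\m{A}}^{*}\m{P}\m{V}_{\m{B}}$ that $\m{V}_{\m{A}}$ and $\m{V}_{\m{B}}$ agree on $\overline{\c{R}(\m{P})}$, and then uses the factorization $f_{\varepsilon}(x) = x\,g_{\varepsilon}(x)$ (with $g_{\varepsilon}$ bounded on $[0,\infty)$) to absorb the residual ambiguity on $\c{N}(\m{P})$ and conclude $\tilde{\m{C}} = \m{C}$ by functional calculus — precisely the "heavier alternative" you sketch and defer. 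Your primary route instead derives invariance as a corollary of the strict-convexity uniqueness of the EOT minimiser over $\Pi(\mu,\nu)$, combined with the fact that \cref{thm:spec:contrac} produces an \emph{attained} Gaussian minimiser for each aligned pair. This is logically sound and considerably shorter, provided one accepts the reduction-to-Gaussian and strict-convexity claims made informally at the start of Section 3 (both are standard: the Pythagorean identity for $\KL$ gives the first, strict convexity of $\pi \mapsto \KL(\pi\,\|\,\pi_{\indep})$ on the domain of finiteness gives the second). What the abstract route loses is precisely what the paper's constructive route buys: the algebraic argument exposes \emph{why} the formula is invariant (the $x\,g_{\varepsilon}(x)$ factorisation killing the null-space rotations) and, as a byproduct, diagnoses exactly where the invariance breaks down as $\varepsilon \downarrow 0$ — which is the content of the subsequent Remark on the singularity of the entropic limit. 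Your approach would establish invariance for $\varepsilon > 0$ without illuminating that mechanism. Both proofs are valid; yours is a clean indirect argument, the paper's is a mechanistic one chosen because it feeds the later discussion of non-uniqueness at $\varepsilon = 0$.
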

\begin{proof}
Let $(\m{G}, \m{M})$ and $(\tilde{\m{G}}, \tilde{\m{M}})$ be two pairs of properly aligned Green's operators for $\m{A}$ and $\m{B}$.
Since $\m{G} \m{G}^* = \tilde{\m{G}} \tilde{\m{G}}^* = \m{A}$, the polar decomposition existence theorem implies there exist partial isometries $\m{V}_{\m{A}}, \m{V}_{\m{B}}$ such that $\tilde{\m{G}} = \m{G} \m{V}_{\m{A}}, \tilde{\m{M}} = \m{M} \m{V}_{\m{B}}$, where $\m{V}_{\m{A}}$ maps $\c{N}(\tilde{\m{G}})^{\perp}$ isometrically onto $\c{N}(\m{G})^{\perp}$ (and is zero elsewhere).
Define the aligned products $\m{P} = \m{G}^* \m{M}$ and $\tilde{\m{P}} = \tilde{\m{G}}^* \tilde{\m{M}}$. Substituting the relation yields
\begin{align*}
    \tilde{\m{P}} = \m{V}_{\m{A}}^* \m{G}^* \m{M} \m{V}_{\m{B}} = \m{V}_{\m{A}}^* \m{P} \m{V}_{\m{B}}.
\end{align*}
Since both pairs are properly aligned ($\m{P} \succeq \m{0}$ and $\tilde{\m{P}} \succeq \m{0}$), we can compare their squares. Using $\m{G} \m{G}^* = \m{A}$:
\begin{align*}
    \tilde{\m{P}}^2 = \tilde{\m{P}}^* \tilde{\m{P}} = (\tilde{\m{M}}^* \tilde{\m{G}}) (\tilde{\m{G}}^* \tilde{\m{M}}) = \tilde{\m{M}}^* \m{A} \tilde{\m{M}} = \m{V}_{\m{B}}^* \m{M}^* \m{A} \m{M} \m{V}_{\m{B}} = \m{V}_{\m{B}}^* \m{P}^2 \m{V}_{\m{B}}.
\end{align*}
Since the square root of a positive operator is unique, this implies $\tilde{\m{P}} = \m{V}_{\m{B}}^* \m{P} \m{V}_{\m{B}}$.
Comparing the two expressions 
\begin{align*}
    \m{V}_{\m{A}}^* \m{P} = [\m{V}_{\m{A}}^* \m{P} \m{V}_{\m{B}}] \m{V}_{\m{B}}^* = [\m{V}_{\m{B}}^* \m{P} \m{V}_{\m{B}}] \m{V}_{\m{B}}^* = \m{V}_{\m{B}}^* \m{P} \quad \Rightarrow \quad \m{P} \m{V}_{\m{A}} = \m{P} \m{V}_{\m{B}},
\end{align*}
which implies that $\m{V}_{\m{A}}$ and $\m{V}_{\m{B}}$ act identically on the range $\overline{\c{R}(\m{P})}$.
Now, consider the cross-covariance term $\tilde{\m{C}} = \tilde{\m{G}} f_{\varepsilon}(\tilde{\m{P}}) \tilde{\m{M}}^*$.
We utilize the factorization $f_{\varepsilon}(x) = x g_{\varepsilon}(x)$ where $g_{\varepsilon}(x) = \frac{2}{\sqrt{4x^2+\varepsilon^2}+\varepsilon}$ is bounded on $[0, \infty)$.
\begin{align*}
    \tilde{\m{C}} = \m{G} \m{V}_{\m{A}} \left[ \tilde{\m{P}} g_{\varepsilon}(\tilde{\m{P}}) \right] \m{V}_{\m{B}}^* \m{M}^* = \m{G} \m{V}_{\m{A}} \tilde{\m{P}} \left[ \m{V}_{\m{B}}^* g_{\varepsilon}(\m{P}) \m{V}_{\m{B}} \right] \m{V}_{\m{B}}^* \m{M}^* = \m{G} (\m{V}_{\m{A}} \tilde{\m{P}} \m{V}_{\m{B}}^*) g_{\varepsilon}(\m{P}) \m{M}^*.
\end{align*}
Using $\tilde{\m{P}} = \m{V}_{\m{A}}^* \m{P} \m{V}_{\m{B}}$, we have $\m{V}_{\m{A}} \tilde{\m{P}} \m{V}_{\m{B}}^* = \m{V}_{\m{A}} \m{V}_{\m{A}}^* \m{P} \m{V}_{\m{B}} \m{V}_{\m{B}}^* = \m{\Pi}_{\c{N}(\m{G})^\perp} \m{P} \m{\Pi}_{\c{N}(\m{M})^\perp} = \m{P}$. Thus:
\begin{align*}
    \tilde{\m{C}} = \m{G} \m{P} g_{\varepsilon}(\m{P}) \m{M}^* = \m{G} f_{\varepsilon}(\m{P}) \m{M}^* = \m{C},
\end{align*}
proving that the covariance is invariant.
\end{proof}

\begin{remark}[The Singularity of $\varepsilon \to 0$]
The failure of the proof for $\varepsilon=0$ reflects the fundamental non-uniqueness of the unregularized problem.
The proof relies on the factorization $f_{\varepsilon}(\m{P}) = \m{P} g_{\varepsilon}(\m{P})$ to \emph{absorb} the ambiguity of the partial isometries (specifically, that $\m{V}_{\m{A}}$ and $\m{V}_{\m{B}}$ must agree on the range of $\m{P}$).
When $\varepsilon \to 0$, the function $g_{\varepsilon}(x) \to 1/x$ becomes singular at the origin. Consequently, the operator $g_{0}(\m{P})$ behaves like the pseudoinverse $\m{P}^{\dagger}$, which is unbounded if $\m{P} \in \c{B}_{1}^{+}(\c{H})$ has an infinite rank.
Geometrically, for $\varepsilon > 0$, the entropic penalty forces the correlations to vanish smoothly on the null space of $\m{P}$. For $\varepsilon = 0$, $f_0$ acts as the hard projection $\m{\Pi}_{\overline{\c{R}(\m{P})}}$. This projection preserves any arbitrary unitary rotations on the null space $\c{N}(\m{P})$ -- rotations which are \textbf{not} constrained by the alignment condition $\m{G}^* \m{M} \succeq \m{0}$. Without the entropic \emph{filter}, these ghost rotations persist, leading to non-unique optimal couplings.
\end{remark}

We emphasize that the spectral shrinkage framework established in \cref{thm:spec:contrac} and \cref{prop:EOT:invar} offers more than a theoretical reformulation; it enables a fundamental algorithmic shift. While standard Gaussian EOT solvers rely on Sinkhorn iterations—even when implemented via efficient matrix updates \cite{janati2020entropic, mallasto2022entropy}—they remain iterative fixed-point schemes, susceptible to convergence errors and numerical instability, particularly in the small-regularization regime. In contrast, our approach reduces the Gaussian EOT problem to a direct algebraic computation. By recognizing that the optimal cost is determined purely by the singular spectrum of the operator product $\m{G}^{*}\m{M}$, the exact solution can be recovered using standard linear algebra routines (e.g., SVD of arbitrary Green's operators) in finite time, completely eliminating the need for iterative approximation. This advantage is particularly pronounced in multi-scale simulations or regularization path analysis. Once the eigendecomposition of $\m{G}^{*}\m{M}$ is computed for \textit{any} properly aligned initial pair $(\m{G}, \m{M})$—a single $\c{O}(d^{3})$ operation (e.g., one-step update in \cref{rmk:one:update})—the entropic cost or coupling for any $\varepsilon > 0$ can be evaluated in $\c{O}(d)$ time via scalar spectral mapping. This can yield a dramatic speedup compared to restarting Sinkhorn for each $\varepsilon$, where the required number of iterations typically grows as $\varepsilon \downarrow 0$ \cite{cuturi2013sinkhorn, ghosal2025convergence}.

\subsection{Convergence of Couplings}
Following \cref{ssec:rev:OT}, we now assume that $\m{B}$ is reachable from $\m{A}$, i.e., $\m{A} \leadsto \m{B}$, and fix $\m{G} = \m{A}^{1/2} \in \c{G}(\m{A})$ without loss of generality. Note that if $\m{G}$ and $\m{M}$ are properly aligned (as characterized in \cref{thm:Kanto:char}), then they satisfy the geometric mean identity \cite{bhatia2019bures}:
\begin{align*}
    (\m{G}^{*} \m{M})^{2} = \m{G}^{*} \m{B} \m{G} \quad \Longrightarrow \quad \m{G}^{*} \m{M} = (\m{A}^{1/2} \m{B} \m{A}^{1/2})^{1/2}.
\end{align*} 
In the finite-dimensional case $\c{H} = \b{R}^{d}$, various convergence results of the EOT coupling $\pi_{\varepsilon}$ to the unique unregularized OT coupling under certain regularity conditions or compactness of the support \cite{leonard2012schrodinger, carlier2017convergence, nutz2022entropic, carlier2023convergence, eckstein2022quantitative, ghosal2022stability}.
While the specific stability analysis for the Gaussian setting has been explored in \cite{janati2020entropic, goldfeld2020gaussian} (for $\b{R}^{d}$) and \cite{minh2023entropic} (for infinite dimensions), these works typically assume the injectivity of both covariance operators. 

However, the optimal Gaussian coupling is unique in the degenerate setting if and only if the Schur complement vanishes $\m{B}/\m{A} = \m{0}$ as stated in \cref{ssec:rev:OT}. When this condition fails, the set of optimal unregularized couplings is parametrized by the choice of $\m{N}_{{12}}  : \c{H}_{2} \to \c{H}_{1}$ in \cref{thm:Kanto:char} satisfying $\c{R}(\m{N}_{{12}}) \subset \c{N}(\m{B}_{{11}}^{1/2} \m{A}_{{11}}^{1/2})$ and $\m{N}_{{12}}^{*} \m{N}_{{12}} \preceq \m{B}/\m{A}$. Specifically, a coupling corresponds to a Monge map (i.e., is supported on a graph) if and only if the variance is minimized, which requires saturating the inequality $\m{N}_{{12}}^{*} \m{N}_{{12}} = \m{B}/\m{A}$.
In stark contrast, we claim that the entropic regularization mechanism selects the \emph{most diffusive} solution among the optimal set. This leads to a direct implication: \textbf{whenever the optimal transport solution is non-unique (i.e., $\m{B}/\m{A} \ne \m{0}$), the EOT limit is never a Monge solution}. Specifically, as $\varepsilon \to 0$, the EOT coupling $\m{\Sigma}_{\varepsilon} \in \c{B}_{1}^{+}(\c{H} \times \c{H})$ converges to the specific OT coupling defined by choosing $\m{N}_{{12}} = \m{0}$ (the minimal norm solution) in the Wasserstein distance.

In this regard, we will define the canonical choice of relevant operators to ease the notation. First, the properly aligned Green's operators are selected according to $\m{N}_{{12}} = \m{0}$:
\begin{align}\label{eq:canon:Green}
    \m{G}_{0} := \m{A}^{1/2} = \begin{pmatrix}
    \m{A}_{11}^{1/2} & \m{0} \\
    \m{0} & \m{0}
    \end{pmatrix} \in \c{G}(\m{A}), \quad \m{M}_{0} :=
    \begin{pmatrix}
    \m{A}_{11}^{\dagger/2} \m{P}_{11}^{1/2} & \m{0} \\
    [\m{P}_{11}^{\dagger/2} \m{A}_{{11}}^{1/2} \m{B}_{{12}}]^{*} & (\m{B}/\m{A})^{1/2}
    \end{pmatrix}  \in \c{G}(\m{B}),
\end{align}
where $\m{P}_{11} := \m{A}_{11}^{1/2} \m{B}_{11} \m{A}_{11}^{1/2} : \c{H}_{1} \to \c{H}_{1}$. Consequently, the proper alignement condition is:
\begin{align*}
    \m{G}^{*} \m{M} = \m{M}^{*} \m{G} = \m{P}^{1/2} \succeq \m{0}, \quad \m{P} :=  \m{A}^{1/2} \m{B} \m{A}^{1/2} = \begin{pmatrix}
    \m{P}_{11} & \m{0} \\
    \m{0} & \m{0}
    \end{pmatrix}. 
\end{align*}
Finally, the limit of the EOT coupling is given by
\begin{align*}
    \m{\Sigma}_{0} := 
    \left(
    \begin{array}{c|c}
    \m{A} & \m{C}_{0} \\
    \hline
    \m{C}_{0}^{*} & \m{B}
    \end{array}
    \right) \in \c{B}_{1}^{+}(\c{H} \times \c{H}).
\end{align*}   
where the cross-covariance is given by:
\begin{align}
    \m{C}_{0} := \m{G}_{0} \m{M}_{0}^{*} = \begin{pmatrix}
    \m{P}_{11}^{1/2}  & \m{B}_{11}^{1/2} \m{P}_{11}^{\dagger/2} \m{A}_{{11}}^{1/2} \m{B}_{{12}} \\
    \m{0} & \m{0}
    \end{pmatrix} \in \c{B}_{1}^{+}(\c{H}), \label{eq:limit:OT}
\end{align} 

\begin{example}[Orthogonal Marginals]
Consider the degenerate case on $\c{H} = \b{R}^{2}$ with orthogonal supports:
\begin{equation*}
    \m{A} = \begin{pmatrix}
    1 & 0 \\
    0 & 0
    \end{pmatrix}, \quad \m{B} = \begin{pmatrix}
    0 & 0 \\
    0 & 1
    \end{pmatrix}.
\end{equation*}
In this case, $\m{A} \m{B} = \m{0}$ and the standard OT problem admits infinite solutions. Any coupling of the form:
\begin{align*}
    \m{\Sigma} = \left(
    \begin{array}{cc|cc}
        1 & 0 & 0 & \rho \\
        0 & 0 & 0 & 0 \\
        \hline
        0 & 0 & 0 & 0 \\
        \rho & 0 & 0 & 1
    \end{array}
    \right), \quad |\rho| \le 1,
\end{align*}
is an optimal OT coupling, and the coupling corresponds to the Monge solution is obtained at the boundaries $\rho = \pm 1$ (perfect correlation or anti-correlation). On the other hand, \cref{cor:ortho:indep} shows that, for any $\varepsilon > 0$, the EOT solution is the independent coupling ($\rho = 0$):
\begin{align*}
    \m{\Sigma}_{\varepsilon} \equiv \m{\Sigma}_{0} = \left(
    \begin{array}{cc|cc}
        1 & 0 & 0 & 0 \\
        0 & 0 & 0 & 0 \\
        \hline
        0 & 0 & 0 & 0 \\
        0 & 0 & 0 & 1
    \end{array}
    \right),
\end{align*}
The regularization explicitly rejects the deterministic transport plans in favor of the maximum entropy (independent) plan.
\end{example}

We present another expression of $\m{M}_{0}$ that offers a computational advantage by bypassing the block decomposition in its implementation:
\begin{proposition}[One-Step Coordinate-Free Update]\label{prop:one:update}
Let $\m{A}, \m{B} \in \c{B}_{1}^{+}(\c{H})$ with $\m{A} \leadsto \m{B}$. For any $\m{M} \in \c{G}(\m{B})$, define
\begin{align*}
    \m{M}_{0} := \m{K}_{0} + (\m{B} - \m{K}_{0} \m{K}_{0}^{*})^{1/2}, \quad \m{K}_{0} = \m{M} (\m{A}^{1/2} \m{M})^{\dagger} \m{P}^{1/2}.
\end{align*}
Then, neither $\m{K}_{0}$ nor $\m{M}_{0}$ do not depend on the choice of $\m{M} \in \c{G}(\m{B})$. Furthermore, the second correction term vanishes if and only if $\m{B}/\m{A} = \m{0}$. In particular, $\m{M}_{0}$ coincides with the canonical choice in \eqref{eq:canon:Green}. 
\end{proposition}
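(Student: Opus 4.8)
\emph{Overview of the plan.} The plan is to first collapse the defining formula for $\m{K}_{0}$ into a coordinate-free expression in which $\m{M}$ does not appear, and then to identify $\m{M}_{0}$ with the block representation \eqref{eq:canon:Green} by a short computation over the splitting $\c{H} = \c{H}_{1} \oplus \c{H}_{2}$.

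\emph{Step 1 (independence of $\m{M}$).} The key observation is that $\m{N} := \m{A}^{1/2} \m{M}$ is itself a Green's operator for $\m{P} = \m{A}^{1/2} \m{B} \m{A}^{1/2}$, since $\m{N} \m{N}^{*} = \m{A}^{1/2} \m{M} \m{M}^{*} \m{A}^{1/2} = \m{A}^{1/2} \m{B} \m{A}^{1/2} = \m{P}$. I would then use the left polar decomposition $\m{N} = \m{P}^{1/2} \m{V}$, where $\m{V} = \m{P}^{\dagger/2} \m{N} = \m{P}^{\dagger/2} \m{A}^{1/2} \m{M}$ is the canonical partial isometry with initial space $\overline{\c{R}(\m{N}^{*})}$ and final space $\overline{\c{R}(\m{P})}$. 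A direct check of the Moore--Penrose axioms (using $\m{P}^{\dagger/2} \m{P}^{1/2} = \m{\Pi}_{\overline{\c{R}(\m{P})}} = \m{V} \m{V}^{*}$) gives $\m{N}^{\dagger} = \m{V}^{*} \m{P}^{\dagger/2}$, hence $\m{N}^{\dagger} \m{P}^{1/2} = \m{V}^{*} \m{P}^{\dagger/2} \m{P}^{1/2} = \m{V}^{*}$ because $\m{V}^{*}$ is already supported on $\overline{\c{R}(\m{P})}$. Since $\m{V}^{*} = \m{M}^{*} \m{A}^{1/2} \m{P}^{\dagger/2}$, this yields
\begin{align*}
    \m{K}_{0} \;=\; \m{M} \m{N}^{\dagger} \m{P}^{1/2} \;=\; \m{M} \m{V}^{*} \;=\; \m{M} \m{M}^{*} \m{A}^{1/2} \m{P}^{\dagger/2} \;=\; \m{B} \m{A}^{1/2} \m{P}^{\dagger/2},
\end{align*}
a \emph{bounded} operator (it equals $\m{M} \m{V}^{*}$, a product of bounded operators, even though $\m{P}^{\dagger/2}$ is generally unbounded) which visibly does not depend on the choice of $\m{M} \in \c{G}(\m{B})$; consequently neither does $\m{M}_{0} = \m{K}_{0} + (\m{B} - \m{K}_{0} \m{K}_{0}^{*})^{1/2}$. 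The only delicate point here is the domain bookkeeping for $\m{P}^{\dagger/2}$: one has to verify that the ranges appearing in each composition lie in $\mathrm{Dom}(\m{P}^{\dagger/2}) = \c{R}(\m{P}^{1/2}) \oplus \c{N}(\m{P})$, which follows from $\c{R}(\m{N}) \subset \overline{\c{R}(\m{P})} = \overline{\c{R}(\m{P}^{1/2})}$. This is the step I expect to be the main (if modest) obstacle.

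\emph{Step 2 (block form).} Write $\m{M}_{\star}$ for the canonical Green's operator $\m{M}_{0}$ of \eqref{eq:canon:Green}. Expanding $\m{K}_{0} = \m{B} \m{A}^{1/2} \m{P}^{\dagger/2}$ over $\c{H} = \c{H}_{1} \oplus \c{H}_{2}$ with $\m{A}^{1/2} = \mathrm{diag}(\m{A}_{11}^{1/2}, \m{0})$ and $\m{P}^{\dagger/2} = \mathrm{diag}(\m{P}_{11}^{\dagger/2}, \m{0})$, $\m{P}_{11} = \m{A}_{11}^{1/2} \m{B}_{11} \m{A}_{11}^{1/2}$, yields
\begin{align*}
    \m{K}_{0} = \begin{pmatrix} \m{B}_{11} \m{A}_{11}^{1/2} \m{P}_{11}^{\dagger/2} & \m{0} \\ \m{B}_{21} \m{A}_{11}^{1/2} \m{P}_{11}^{\dagger/2} & \m{0} \end{pmatrix}.
\end{align*}
The $(2,1)$ block coincides with $[\m{P}_{11}^{\dagger/2} \m{A}_{11}^{1/2} \m{B}_{12}]^{*}$ of \eqref{eq:canon:Green} since $\m{B}_{12}^{*} = \m{B}_{21}$. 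For the $(1,1)$ block I would prove $\m{B}_{11} \m{A}_{11}^{1/2} \m{P}_{11}^{\dagger/2} = \m{A}_{11}^{\dagger/2} \m{P}_{11}^{1/2}$: left-multiplying by the injective operator $\m{A}_{11}^{1/2}$ sends the left-hand side to $\m{P}_{11} \m{P}_{11}^{\dagger/2} = \m{P}_{11}^{1/2}$ and the right-hand side to $\m{A}_{11}^{1/2} \m{A}_{11}^{\dagger/2} \m{P}_{11}^{1/2} = \m{P}_{11}^{1/2}$ (using $\c{R}(\m{P}_{11}^{1/2}) = \c{R}(\m{A}_{11}^{1/2} \m{B}_{11}^{1/2}) \subset \c{R}(\m{A}_{11}^{1/2})$, which also makes $\m{A}_{11}^{\dagger/2} \m{P}_{11}^{1/2}$ a genuine bounded operator, cf.\ \cite{yun2025gaussian}), and injectivity of $\m{A}_{11}^{1/2}$ cancels the factor. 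Hence $\m{K}_{0} = \m{M}_{\star} \m{\Pi}_{\c{H}_{1}}$.

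\emph{Step 3 (correction term and conclusion).} Since $\m{M}_{\star} \m{M}_{\star}^{*} = \m{B}$, Step 2 gives
\begin{align*}
    \m{B} - \m{K}_{0} \m{K}_{0}^{*} = \m{M}_{\star} (\m{I} - \m{\Pi}_{\c{H}_{1}}) \m{M}_{\star}^{*} = \m{M}_{\star} \m{\Pi}_{\c{H}_{2}} \m{M}_{\star}^{*} = \mathrm{diag}\!\left(\m{0},\, \m{B}/\m{A}\right) \succeq \m{0},
\end{align*}
where I read off $\m{M}_{\star} \m{\Pi}_{\c{H}_{2}} = \mathrm{diag}(\m{0}, (\m{B}/\m{A})^{1/2})$ directly from \eqref{eq:canon:Green}. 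As $\m{M}_{\star} \m{\Pi}_{\c{H}_{2}}$ is itself self-adjoint and n.n.d., one has $\m{B} - \m{K}_{0} \m{K}_{0}^{*} = (\m{M}_{\star} \m{\Pi}_{\c{H}_{2}})^{2}$, so the unique n.n.d.\ square root is $(\m{B} - \m{K}_{0} \m{K}_{0}^{*})^{1/2} = \m{M}_{\star} \m{\Pi}_{\c{H}_{2}}$. Adding this to $\m{K}_{0} = \m{M}_{\star} \m{\Pi}_{\c{H}_{1}}$ gives $\m{M}_{0} = \m{M}_{\star} (\m{\Pi}_{\c{H}_{1}} + \m{\Pi}_{\c{H}_{2}}) = \m{M}_{\star}$, the canonical choice of \eqref{eq:canon:Green}; and the correction term $\mathrm{diag}(\m{0}, (\m{B}/\m{A})^{1/2})$ is zero if and only if $\m{B}/\m{A} = \m{0}$, which is the final assertion.
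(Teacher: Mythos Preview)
Your proof is correct and follows the same high-level strategy as the paper—factor out a partial isometry to obtain an $\m{M}$-free formula for $\m{K}_0$, then match blocks against \eqref{eq:canon:Green}—but the execution differs in two places worth noting. First, you work with the partial isometry $\m{V} = \m{P}^{\dagger/2}\m{A}^{1/2}\m{M}$ attached to $\m{N} = \m{A}^{1/2}\m{M} \in \c{G}(\m{P})$, arriving at $\m{K}_0 = \m{B}\m{A}^{1/2}\m{P}^{\dagger/2}$; the paper instead uses $\m{U}_{\m{M}} = \m{B}^{\dagger/2}\m{M}$ and lands on the (equivalent) expression $\m{K}_0 = \m{B}^{1/2}(\m{A}^{1/2}\m{B}^{1/2})^\dagger \m{P}^{1/2}$. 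Second, once $\m{M}$-independence is established, the paper short-circuits your Step~2 by plugging $\m{M} = \m{M}_\star$ back into the \emph{definition} of $\m{K}_0$ (so $\m{A}^{1/2}\m{M}_\star = \mathrm{diag}(\m{P}_{11}^{1/2},\m{0})$ and the blocks fall out immediately), whereas you expand your $\m{M}$-free formula in blocks and prove the identity $\m{B}_{11}\m{A}_{11}^{1/2}\m{P}_{11}^{\dagger/2} = \m{A}_{11}^{\dagger/2}\m{P}_{11}^{1/2}$ by injectivity of $\m{A}_{11}^{1/2}$. The paper's shortcut is slightly quicker here, but your route pays off in Step~3: the structural observation $\m{K}_0 = \m{M}_\star\m{\Pi}_{\c{H}_1}$ makes $\m{B} - \m{K}_0\m{K}_0^* = \m{M}_\star\m{\Pi}_{\c{H}_2}\m{M}_\star^*$ immediate, so you never have to expand $\m{K}_0\m{K}_0^*$ block by block and invoke the Schur identity \eqref{eq:schur:invar} explicitly (you use it only implicitly, via $\m{M}_\star \in \c{G}(\m{B})$).
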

\begin{proof}
Recall from \cref{ssec:rev:OT} that $\m{U}_{\m{M}} = \m{B}^{\dagger/2} \m{M}$ is the partial isometry with the final space $\c{R}(\m{U}_{\m{M}}) = \overline{\c{R}(\m{B})}$, thus $\m{U}_{\m{M}} \m{U}_{\m{M}}^{*} = \m{\Pi}_{\overline{\c{R}(\m{B})}}$. Consequently, 
\begin{align*}
    \m{M} (\m{A}^{1/2} \m{M})^{\dagger} \m{P}^{1/2} &= \m{B}^{1/2} \m{U}_{\m{M}} (\m{A}^{1/2} \m{B}^{1/2} \m{U})^{\dagger} \m{P}^{1/2} \\
    &= \m{B}^{1/2} \m{U}_{\m{M}} \m{U}_{\m{M}}^{*} (\m{A}^{1/2} \m{B}^{1/2})^{\dagger} \m{P}^{1/2} \\
    &= \m{B}^{1/2} \m{\Pi}_{\overline{\c{R}(\m{B})}} (\m{A}^{1/2} \m{B}^{1/2})^{\dagger} \m{P}^{1/2} = \m{B}^{1/2} (\m{A}^{1/2} \m{B}^{1/2})^{\dagger} \m{P}^{1/2},
\end{align*}
where the last equality holds since $\c{R}((\m{A}^{1/2}\m{B}^{1/2})^\dagger) = \c{R}(\m{B}^{1/2}\m{A}^{1/2}) \subseteq \overline{\c{R}(\m{B})}$. Thus, $\m{K}_{0}$ does not depend on the choice of $\m{M} \in \c{G}(\m{B})$. To derive the block representation of $\m{K}_{0}$, consider the specific Green's operator (which is simply $\m{M}_{0}$):
\begin{align*}
    \m{M} = \begin{pmatrix}
    \m{A}_{11}^{\dagger/2} \m{P}_{11}^{1/2} & \m{0} \\
    [\m{P}_{11}^{\dagger/2} \m{A}_{{11}}^{1/2} \m{B}_{{12}}]^{*} & (\m{B}/\m{A})^{1/2}
    \end{pmatrix}  \in \c{G}(\m{B}),
\end{align*}
which is a valid Green's operator due to \cite[Lemma 2.4.]{yun2025gaussian}:
\begin{equation}\label{eq:schur:invar}
        \m{B}/\m{A} = \m{B}_{22} - [\m{P}_{11}^{\dagger/2} \m{A}_{{11}}^{1/2} \m{B}_{{12}}]^{*} [\m{P}_{11}^{\dagger/2} \m{A}_{{11}}^{1/2} \m{B}_{{12}}],
\end{equation}
Therefore, we have $\m{A}^{1/2} \m{M} = \text{diag}(\m{P}_{11}^{1/2}, \m{0})$ and
\begin{align*}
    \m{K}_{0} = \begin{pmatrix}
    \m{A}_{11}^{\dagger/2} \m{P}_{11}^{1/2} & \m{0} \\
    [\m{P}_{11}^{\dagger/2} \m{A}_{{11}}^{1/2} \m{B}_{{12}}]^{*} & (\m{B}/\m{A})^{1/2}
    \end{pmatrix} \begin{pmatrix}
    \m{P}_{11}^{\dagger/2} & \m{0} \\
    \m{0} & \m{0}
    \end{pmatrix} \begin{pmatrix}
    \m{P}_{11}^{1/2} & \m{0} \\
    \m{0} & \m{0}
    \end{pmatrix} = \begin{pmatrix}
    \m{A}_{11}^{\dagger/2} \m{P}_{11}^{1/2} & \m{0} \\
    [\m{P}_{11}^{\dagger/2} \m{A}_{{11}}^{1/2} \m{B}_{{12}}]^{*} & \m{0}
    \end{pmatrix}.
\end{align*}
Consequently, using \eqref{eq:schur:invar}, we obtain:
\begin{align*}
    \m{B} - \m{K}_{0} \m{K}_{0}^{*} = \begin{pmatrix}
    \m{0} & \m{0} \\
    \m{0} & \m{B}/\m{A}
    \end{pmatrix}.
\end{align*}
Thus, $\m{M} = \m{K}_{0} + (\m{B} - \m{K}_{0} \m{K}_{0}^{*})^{1/2}$ recovers the block form in \eqref{eq:canon:Green}.
\end{proof}

\begin{remark}[Construction of Proper Alignment]\label{rmk:one:update}
Notably, \cref{prop:one:update} generalizes to a constructive method for obtaining a properly aligned Green's operator from arbitrary initial choices. Given $\m{A}, \m{B} \in \c{B}_{1}^{+}(\c{H})$, let $\m{G} \in \c{G}(\m{A})$ and $\m{M} \in \c{G}(\m{B})$ be any Green's operators (e.g., obtained via Cholesky decomposition). We can update $\m{M}$ to satisfy the alignment condition with $\m{G}$ by defining:
\begin{align*}
    \m{K} = \m{M} (\m{G}^{*} \m{M})^{\dagger} (\m{G}^{*} \m{B} \m{G})^{1/2} \quad \text{and} \quad \m{L} \in \c{G}(\m{B} - \m{K} \m{K}^{*}).
\end{align*}
Then, setting $\widetilde{\m{M}} := \m{K} + \m{L}$ guarantees that $\widetilde{\m{M}} \in \c{G}(\m{B})$ and $\m{G}^{*} \widetilde{\m{M}} = (\m{G}^{*} \m{B} \m{G})^{1/2} \succeq \m{0}$. This construction admits an interpretation as an operator-valued Pythagorean theorem with respect to the Bures-Wasserstein geometry:
\begin{align*}
    \underbrace{\m{M} \m{M}^{*}}_{\substack{\text{total covariance} \\ (= \m{B})}} = \underbrace{\m{K} \m{K}^{*}}_{\substack{\text{aligned covariance} \\ (\text{projection onto } \c{R}(\m{M}^{*}\m{G}))}} + \underbrace{\m{L} \m{L}^{*}}_{\substack{\text{residual variance} \\ (= \text{diag}(\m{0}, \m{B}/\m{A}))}}.
\end{align*}
This intrinsic geometric structure remains obscured in the dominant literature, which typically relies on the assumption $\c{N}(\m{A}) \subseteq \c{N}(\m{B})$ or non-singular marginals. In such settings, the Schur complement vanishes, leading standard approaches to default to symmetric square roots $\m{G}=\m{A}^{1/2}$ and $\m{M}=\m{B}^{1/2}$—which still fail to be properly aligned in the non-commutative case.
\end{remark}

\begin{theorem}\label{thm:EOT:conv}
Let $\m{A}, \m{B} \in \c{B}_{1}^{+}(\c{H})$ with $\m{A} \leadsto \m{B}$. Then, the cross-covariance
$\m{C}_{\varepsilon} \in \c{B}_{1}(\c{H})$ in \eqref{eq:opt:cross-cov} converges to $\m{C}_{0} \in \c{B}_{1}(\c{H})$ in \eqref{eq:limit:OT} in the trace-class norm $\vertii{\cdot}_1$. Consequently, the Gaussian EOT coupling $\pi_{\varepsilon} = \c{N}(\m{0}, \m{\Sigma}_{\varepsilon})$ converges to the canonical OT coupling $\pi_{0} = \c{N}(\m{0}, \m{\Sigma}_{0})$ in the 2-Wasserstein distance:
\begin{align*}
    \lim_{\varepsilon \to 0} \c{W}_{2}(\pi_{\varepsilon}, \pi_{0}) = 0.
\end{align*}
\end{theorem}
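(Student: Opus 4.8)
The plan is to reduce the $2$-Wasserstein convergence of the couplings to the trace-norm estimate $\vertii{\m{C}_\varepsilon - \m{C}_0}_1 \to 0$, and then to prove that estimate by a dominated-convergence argument in the spectral basis of $\m{P}^{1/2} = \m{G}_0^{*}\m{M}_0$. For the reduction, note that $\m{\Sigma}_\varepsilon$ and $\m{\Sigma}_0$ share the diagonal blocks $\m{A}, \m{B}$, so $\m{\Sigma}_\varepsilon - \m{\Sigma}_0$ is the off-diagonal block operator with corner $\m{C}_\varepsilon - \m{C}_0$, whose singular values occur in pairs, giving $\vertii{\m{\Sigma}_\varepsilon - \m{\Sigma}_0}_1 = 2\vertii{\m{C}_\varepsilon - \m{C}_0}_1$. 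Applying the Hilbert--Schmidt bound \eqref{eq:W2:bdd:HS} with the Green's operators $\m{\Sigma}_\varepsilon^{1/2}$ and $\m{\Sigma}_0^{1/2}$, and then the Powers--Størmer inequality $\vertii{\m{S}_1^{1/2} - \m{S}_2^{1/2}}_2^{2} \le \vertii{\m{S}_1 - \m{S}_2}_1$ for $\m{S}_1, \m{S}_2 \in \c{B}_1^{+}(\c{H} \times \c{H})$, I get $\c{W}_2^{2}(\pi_\varepsilon, \pi_0) \le \vertii{\m{\Sigma}_\varepsilon^{1/2} - \m{\Sigma}_0^{1/2}}_2^{2} \le \vertii{\m{\Sigma}_\varepsilon - \m{\Sigma}_0}_1 = 2\vertii{\m{C}_\varepsilon - \m{C}_0}_1$, so it suffices to establish trace-norm convergence of the cross-covariances.

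\textbf{The limiting cross-covariance as a filtered OT coupling.} By \cref{thm:spec:contrac} together with the alignment identity $\m{G}_0^{*}\m{M}_0 = \m{P}^{1/2}$, the cross-covariance in \eqref{eq:opt:cross-cov} is $\m{C}_\varepsilon = \m{G}_0\, f_\varepsilon(\m{P}^{1/2})\, \m{M}_0^{*}$ (independence of the aligned pair being \cref{prop:EOT:invar}). Writing $\m{R}_0 := f_0(\m{P}^{1/2}) = \m{\Pi}_{\overline{\c{R}(\m{P}^{1/2})}}$, I claim that $\m{C}_0 = \m{G}_0 \m{M}_0^{*} = \m{G}_0 \m{R}_0 \m{M}_0^{*}$, i.e. $\m{G}_0\, \m{\Pi}_{\c{N}(\m{P}^{1/2})}\, \m{M}_0^{*} = \m{0}$. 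This is read off the block form \eqref{eq:canon:Green}: the $\c{H}_1$-component of $\m{M}_0^{*}$ has range inside $\overline{\c{R}(\m{P}_{11}^{1/2})}$ (both entries factor through $\m{P}_{11}^{1/2}$ or $\m{P}_{11}^{\dagger/2}$, whose ranges are orthogonal to $\c{N}(\m{P}_{11}^{1/2})$), so $\m{\Pi}_{\c{N}(\m{P}^{1/2})} = \m{\Pi}_{\c{N}(\m{P}_{11}^{1/2})} \oplus \m{I}_{\c{H}_2}$ sends $\overline{\c{R}(\m{M}_0^{*})}$ into $\c{H}_2 = \c{N}(\m{G}_0)$ — equivalently, $\m{R}_0$ is the Green's correlation operator of the OT cross-covariance $\m{C}_0$, the minimal-norm (most diffusive) selection. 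Hence $\m{C}_\varepsilon - \m{C}_0 = \m{G}_0 \bigl(f_\varepsilon(\m{P}^{1/2}) - f_0(\m{P}^{1/2})\bigr) \m{M}_0^{*}$.

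\textbf{Dominated convergence in the spectral basis.} Diagonalize $\m{P}^{1/2} = \sum_j \lambda_j (\m{e}_j \otimes \m{e}_j)$ with $\lambda_j > 0$ and $\{\m{e}_j\}$ an orthonormal basis of $\overline{\c{R}(\m{P}^{1/2})}$. Then $\m{C}_\varepsilon - \m{C}_0 = \sum_j \bigl(f_\varepsilon(\lambda_j) - 1\bigr) (\m{G}_0 \m{e}_j) \otimes (\m{M}_0 \m{e}_j)$, a series converging absolutely in $\c{B}_1(\c{H})$, so $\vertii{\m{C}_\varepsilon - \m{C}_0}_1 \le \sum_j \bigl(1 - f_\varepsilon(\lambda_j)\bigr) \|\m{G}_0 \m{e}_j\|\, \|\m{M}_0 \m{e}_j\|$. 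The majorant is summable, $\sum_j \|\m{G}_0 \m{e}_j\|\, \|\m{M}_0 \m{e}_j\| \le \vertii{\m{G}_0}_2 \vertii{\m{M}_0}_2 = \sqrt{\tr \m{A}}\,\sqrt{\tr \m{B}} < \infty$, by Cauchy--Schwarz and $\m{G}_0, \m{M}_0 \in \c{B}_2(\c{H})$; and for each fixed $j$, $0 \le 1 - f_\varepsilon(\lambda_j) \le 1$ with $1 - f_\varepsilon(\lambda_j) \downarrow 0$ as $\varepsilon \downarrow 0$ (since $\lambda_j > 0$ and $f_\varepsilon(\lambda_j) \uparrow 1$). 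Dominated convergence for series then yields $\vertii{\m{C}_\varepsilon - \m{C}_0}_1 \to 0$, which combined with the reduction above proves the theorem.

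\textbf{Main obstacle.} The delicate point is that $f_\varepsilon(\m{P}^{1/2})$ does \emph{not} converge to $f_0(\m{P}^{1/2}) = \m{\Pi}_{\overline{\c{R}(\m{P}^{1/2})}}$ in operator norm once $\m{P}$ has infinite rank: the spectral defect $1 - f_\varepsilon(\lambda_j)$ tends to $1$ along $j \to \infty$, so no submultiplicative bound of the type $\vertii{\m{G}_0(\cdot)\m{M}_0^{*}}_1 \le \vertii{\m{G}_0}_2 \vertiii{\cdot}_\infty \vertii{\m{M}_0}_2$ can close the argument. The convergence has to be extracted by balancing this residual defect against the summable Green's-operator masses $\|\m{G}_0 \m{e}_j\|\, \|\m{M}_0 \m{e}_j\|$ — the modes carrying the leftover defect are precisely those of vanishing covariance mass — which is exactly where the trace-class hypotheses on $\m{A}$ and $\m{B}$ enter. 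A secondary point to check carefully is the identity $\m{C}_0 = \m{G}_0 \m{R}_0 \m{M}_0^{*}$: the hard projection $\m{R}_0$ discards all directions outside $\overline{\c{R}(\m{P}^{1/2})}$, but those live in the Schur-complement null space $\m{B}/\m{A}$, which $\m{G}_0$ annihilates in any case.
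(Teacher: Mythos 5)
Your proposal is correct, and it takes a genuinely different route from the paper on both ends of the argument. For the final step, you reduce $\c{W}_2(\pi_\varepsilon,\pi_0)\to 0$ to $\vertii{\m{C}_\varepsilon-\m{C}_0}_1\to 0$ quantitatively via the Powers--St\o rmer inequality together with the observation that the off-diagonal defect operator has paired singular values, giving $\c{W}_2^2(\pi_\varepsilon,\pi_0)\le 2\vertii{\m{C}_\varepsilon-\m{C}_0}_1$; the paper instead deduces Wasserstein convergence from $\vertii{\m{\Sigma}_\varepsilon-\m{\Sigma}_0}_1\to 0$ by continuity of the trace and of $\m{X}\mapsto\tr\sqrt{\m{X}}$ on $\c{B}_1^{+}$, which is qualitative but avoids invoking Powers--St\o rmer. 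For the core trace-norm estimate, the paper invokes its general Lemma \ref{lem:Holder:SOT} (SOT convergence of a uniformly bounded middle factor sandwiched by two Hilbert--Schmidt operators implies $\c{B}_1$ convergence, proved by finite-rank approximation of the flanking operators), while you specialize to the spectral basis of $\m{P}^{1/2}=\m{G}_0^*\m{M}_0$ and close the argument by dominated convergence with the Cauchy--Schwarz-summable majorant $\|\m{G}_0\m{e}_j\|\,\|\m{M}_0\m{e}_j\|$; these are essentially the same mechanism (the failure of uniform convergence of $f_\varepsilon$ on the spectrum is compensated by the trace-class decay of the marginals), but yours is self-contained where the paper modularizes the lemma for reuse in Theorem \ref{thm:upper:bdds}. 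You also correctly flag and verify the hidden identity $\m{C}_0=\m{G}_0\m{R}_0\m{M}_0^{*}$ (equivalently $\m{G}_0\m{\Pi}_{\c{N}(\m{P}^{1/2})}\m{M}_0^{*}=\m{0}$), which the paper handles implicitly through the block computation of the limit; your block-form check is a legitimate alternative.
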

\begin{proof}
The optimal entropic cross-covariance is given by $\m{C}_{\varepsilon} = \m{G}_{0} f_{\varepsilon}(\m{G}_{0}^{*} \m{M}_{0}) \m{M}_{0}^*$. 
Consider the limit operator $\m{R}_{0} := f_{0}(\m{G}_{0}^{*} \m{M}_{0}) = \text{diag}(\m{\Pi}_{\overline{\c{R}(\m{P}_{11})}}, \m{0})$. As $\varepsilon \to 0$, $f_{\varepsilon}(\m{G}_{0}^{*} \m{M}_{0}) \xrightarrow{SOT} \m{R}_{0}$ by spectral calculus properties. Applying \cref{lem:Holder:SOT}, we obtain convergence in trace norm:
\begin{align*}
    \m{C}_{\varepsilon} \xrightarrow{\vertii{\cdot}_{1}} \m{G}_{0} \begin{pmatrix} \m{\Pi}_{\overline{\c{R}(\m{P}_{11})}} & \m{0} \\ \m{0} & \m{0} \end{pmatrix} \m{M}_{0}^* = \m{G}_{0} \begin{pmatrix} [\m{A}_{{11}}^{\dagger/2} \m{P}_{11}^{1/2}]^{*} & \m{P}_{11}^{\dagger/2} \m{A}_{{11}}^{1/2} \m{B}_{{12}} \\ \m{0} & \m{0} \end{pmatrix} = \m{G}_{0} \m{M}_{0}^{*} = \m{C}_{0}.
\end{align*}

The squared Bures-Wasserstein distance is given by:
\begin{align*}
    \c{W}_{2}^{2}(\pi_{\varepsilon}, \pi_{0}) = \tr(\m{\Sigma}_{\varepsilon}) + \tr(\m{\Sigma}_{0}) - 2 \tr [(\m{\Sigma}_{\varepsilon}^{1/2} \m{\Sigma}_{0} \m{\Sigma}_{\varepsilon}^{1/2})^{1/2}].
\end{align*}
The convergence $\vertii{\m{\Sigma}_{\varepsilon} - \m{\Sigma}_{0}}_{1} \to 0$ implies convergence of the traces $\tr(\m{\Sigma}_{\varepsilon}) \to \tr(\m{\Sigma}_{0})$. Furthermore, the map $\m{X} \in \c{B}_1^+(\c{H}) \mapsto \tr(\sqrt{\m{X}}) \in [0, \infty)$ is continuous. Since $\m{\Sigma}_{\varepsilon}^{1/2} \m{\Sigma}_{0} \m{\Sigma}_{\varepsilon}^{1/2} \to \m{\Sigma}_{0}^2$ in $\c{B}_{1}^{+}(\c{H})$, the cross-term converges to $\tr(\sqrt{\m{\Sigma}_0^2}) = \tr(\m{\Sigma}_0)$. Thus, the distance vanishes.
\end{proof}

To establish the asymptotic convergence rate, we first derive an explicit expression for the squared Wasserstein distance. To the best of our knowledge, this is the first closed-form formulation expressed exclusively via operators on the marginal space $\c{H}$, bypassing the need to operate on $\c{H} \times \c{H}$.

\begin{theorem}\label{thm:W2:exact}
Let $\varepsilon > 0$ and $\m{A}, \m{B} \in \c{B}_{1}^{+}(\c{H})$ with $\m{A} \leadsto \m{B}$. Then, the squared Wasserstein distance between the Gaussian EOT coupling $\pi_{\varepsilon} = \c{N}(\m{0}, \m{\Sigma}_{\varepsilon})$ and the canonical OT coupling $\pi_{0} = \c{N}(\m{0}, \m{\Sigma}_{0})$ is given by
\begin{align*}
    \c{W}_{2}^{2}(\pi_{\varepsilon}, \pi_{0}) = 2 \tr [\m{A} + \m{B}- \sqrt{\m{Q}_{\varepsilon}}],
\end{align*}
where
\begin{align*}
    \m{Q}_{\varepsilon} :&= (\m{G}_{0}^{*} \m{G}_{0})^{2} + (\m{M}_{0}^{*} \m{M}_{0})^{2} + (\m{G}_{0}^{*} \m{G}_{0}) \m{R}_{\varepsilon} (\m{M}_{0}^{*} \m{M}_{0}) + (\m{M}_{0}^{*} \m{M}_{0}) \m{R}_{\varepsilon} (\m{G}_{0}^{*} \m{G}_{0}). 
\end{align*}
\end{theorem}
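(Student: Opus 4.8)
The plan is to collapse the Bures--Wasserstein distance $\c{W}_{2}^{2}(\pi_{\varepsilon}, \pi_{0})$ to a single trace-norm computation of an operator living on the marginal space $\c{H}$, by factoring both coupling covariances on $\c{H} \times \c{H}$ through suitable ``Green's operators'' adapted to the block structure.

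First I would use the Bures--Wasserstein formula $\c{W}_{2}^{2}(\pi_{\varepsilon}, \pi_{0}) = \tr(\m{\Sigma}_{\varepsilon}) + \tr(\m{\Sigma}_{0}) - 2 \tr[(\m{\Sigma}_{\varepsilon}^{1/2} \m{\Sigma}_{0} \m{\Sigma}_{\varepsilon}^{1/2})^{1/2}]$ (as already invoked in the proof of \cref{thm:EOT:conv}). Since $\pi_{\varepsilon}$ and $\pi_{0}$ are both couplings of $\mu$ and $\nu$, their covariances have equal trace $\tr(\m{\Sigma}_{\varepsilon}) = \tr(\m{\Sigma}_{0}) = \tr(\m{A}) + \tr(\m{B}) = \tr[\m{A} + \m{B}]$, so the statement reduces to the identity $\tr[(\m{\Sigma}_{\varepsilon}^{1/2} \m{\Sigma}_{0} \m{\Sigma}_{\varepsilon}^{1/2})^{1/2}] = \tr[\sqrt{\m{Q}_{\varepsilon}}]$.

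Next I would exhibit Hilbert--Schmidt operators $\mathcal{G}_{\varepsilon}, \mathcal{M}_{0} : \c{H} \to \c{H} \times \c{H}$ with $\m{\Sigma}_{\varepsilon} = \mathcal{G}_{\varepsilon} \mathcal{G}_{\varepsilon}^{*}$ and $\m{\Sigma}_{0} = \mathcal{M}_{0} \mathcal{M}_{0}^{*}$. For $\m{\Sigma}_{0}$ the canonical choice $\mathcal{M}_{0} = \left(\begin{smallmatrix} \m{G}_{0} \\ \m{M}_{0} \end{smallmatrix}\right)$ works at once, because $\m{C}_{0} = \m{G}_{0} \m{M}_{0}^{*}$ by \eqref{eq:limit:OT} together with $\m{G}_{0} \m{G}_{0}^{*} = \m{A}$ and $\m{M}_{0} \m{M}_{0}^{*} = \m{B}$; note this factorization works precisely because $\m{R}_{0}$ is a projection, i.e. it is the ``deterministic-type'' Green's operator of $\m{\Sigma}_{0}$. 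For $\m{\Sigma}_{\varepsilon}$, writing $\m{S} = \mathrm{diag}(\m{G}_{0}, \m{M}_{0})$ and $\m{K}_{\varepsilon} = \left(\begin{smallmatrix} \m{I} & \m{R}_{\varepsilon} \\ \m{R}_{\varepsilon} & \m{I} \end{smallmatrix}\right)$ one has $\m{\Sigma}_{\varepsilon} = \m{S} \m{K}_{\varepsilon} \m{S}^{*}$, and the strict bound $\vertiii{\m{R}_{\varepsilon}}_{\infty} < 1$ makes $\m{K}_{\varepsilon}$ bounded below, hence $\m{K}_{\varepsilon}^{1/2}$ is a bounded self-adjoint operator and $\mathcal{G}_{\varepsilon} := \m{S} \m{K}_{\varepsilon}^{1/2}$ is a Hilbert--Schmidt factor. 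Then, since all operators in sight are compact and $\mathcal{G}_{\varepsilon}^{*} \mathcal{M}_{0}$ is a product of two Hilbert--Schmidt operators (hence trace-class), the nonzero eigenvalues of $\m{\Sigma}_{\varepsilon}^{1/2} \m{\Sigma}_{0} \m{\Sigma}_{\varepsilon}^{1/2}$ coincide cyclically (with multiplicities) with those of $\m{\Sigma}_{\varepsilon} \m{\Sigma}_{0} = \mathcal{G}_{\varepsilon}\mathcal{G}_{\varepsilon}^{*}\mathcal{M}_{0}\mathcal{M}_{0}^{*}$, and hence with those of $(\mathcal{G}_{\varepsilon}^{*} \mathcal{M}_{0})(\mathcal{G}_{\varepsilon}^{*} \mathcal{M}_{0})^{*}$; taking square roots and summing gives $\tr[(\m{\Sigma}_{\varepsilon}^{1/2} \m{\Sigma}_{0} \m{\Sigma}_{\varepsilon}^{1/2})^{1/2}] = \vertii{\mathcal{G}_{\varepsilon}^{*} \mathcal{M}_{0}}_{1}$.

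Finally a short block computation gives $\mathcal{G}_{\varepsilon}^{*} \mathcal{M}_{0} = \m{K}_{\varepsilon}^{1/2} \m{S}^{*} \left(\begin{smallmatrix} \m{G}_{0} \\ \m{M}_{0} \end{smallmatrix}\right) = \m{K}_{\varepsilon}^{1/2} \left(\begin{smallmatrix} \m{G}_{0}^{*} \m{G}_{0} \\ \m{M}_{0}^{*} \m{M}_{0} \end{smallmatrix}\right)$, so that, using $\m{R}_{\varepsilon} = \m{R}_{\varepsilon}^{*}$ and $\m{K}_{\varepsilon}^{1/2 *}\m{K}_{\varepsilon}^{1/2} = \m{K}_{\varepsilon}$, one finds $(\mathcal{G}_{\varepsilon}^{*} \mathcal{M}_{0})^{*} (\mathcal{G}_{\varepsilon}^{*} \mathcal{M}_{0}) = \left(\begin{smallmatrix} \m{G}_{0}^{*} \m{G}_{0} & \m{M}_{0}^{*} \m{M}_{0} \end{smallmatrix}\right) \m{K}_{\varepsilon} \left(\begin{smallmatrix} \m{G}_{0}^{*} \m{G}_{0} \\ \m{M}_{0}^{*} \m{M}_{0} \end{smallmatrix}\right) = (\m{G}_{0}^{*} \m{G}_{0})^{2} + (\m{M}_{0}^{*} \m{M}_{0})^{2} + (\m{G}_{0}^{*} \m{G}_{0}) \m{R}_{\varepsilon} (\m{M}_{0}^{*} \m{M}_{0}) + (\m{M}_{0}^{*} \m{M}_{0}) \m{R}_{\varepsilon} (\m{G}_{0}^{*} \m{G}_{0}) = \m{Q}_{\varepsilon}$. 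Hence $\vertii{\mathcal{G}_{\varepsilon}^{*} \mathcal{M}_{0}}_{1} = \tr[\sqrt{(\mathcal{G}_{\varepsilon}^{*}\mathcal{M}_{0})^{*}(\mathcal{G}_{\varepsilon}^{*}\mathcal{M}_{0})}] = \tr[\sqrt{\m{Q}_{\varepsilon}}]$, which combined with the first step yields $\c{W}_{2}^{2}(\pi_{\varepsilon}, \pi_{0}) = 2\tr[\m{A}+\m{B}] - 2\tr[\sqrt{\m{Q}_{\varepsilon}}] = 2 \tr[\m{A}+\m{B}-\sqrt{\m{Q}_{\varepsilon}}]$; in particular $\m{Q}_{\varepsilon} \succeq \m{0}$ and $\sqrt{\m{Q}_{\varepsilon}}$ is trace-class, so the right-hand side is well defined.

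The step I expect to demand the most care is the trace identity $\tr[(\m{\Sigma}_{\varepsilon}^{1/2} \m{\Sigma}_{0} \m{\Sigma}_{\varepsilon}^{1/2})^{1/2}] = \vertii{\mathcal{G}_{\varepsilon}^{*} \mathcal{M}_{0}}_{1}$: in the possibly degenerate, infinite-dimensional regime one must be precise that every product appearing is compact, that $\mathcal{G}_{\varepsilon}^{*}\mathcal{M}_{0}$ is trace-class, that the ``cyclic'' rearrangements preserve the nonzero spectrum with multiplicities (so that the sums of square roots of eigenvalues truly coincide), and that zero eigenvalues contribute nothing to either side. The remaining ingredients -- the equality of the two marginal traces, the positivity and boundedness of $\m{K}_{\varepsilon}^{1/2}$ from $\vertiii{\m{R}_{\varepsilon}}_{\infty} < 1$, and the final $2 \times 2$ block arithmetic -- are routine.
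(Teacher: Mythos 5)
Your proposal is correct and is essentially the paper's argument: both reduce the Bures--Wasserstein cross-term to $\tr[\sqrt{\m{Q}_{\varepsilon}}]$ by identifying $\m{Q}_{\varepsilon}$ with $\mathcal{M}_{0}^{*}\m{\Sigma}_{\varepsilon}\mathcal{M}_{0}$ (where $\mathcal{M}_{0} = \left(\begin{smallmatrix} \m{G}_{0} \\ \m{M}_{0} \end{smallmatrix}\right)$) and invoking the fact that $X^{*}X$ and $XX^{*}$ share nonzero spectrum. The only superficial difference is that you detour through the auxiliary Green's factor $\mathcal{G}_{\varepsilon} = \m{S}\m{K}_{\varepsilon}^{1/2}$ and apply the cyclic identity twice, whereas the paper takes $X = \m{\Sigma}_{\varepsilon}^{1/2}\mathcal{M}_{0}$ directly and applies it once; both yield the same $\m{Q}_{\varepsilon}$ via the same block computation.
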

\begin{proof}
Noting that
\begin{align*}
    \m{\Sigma}_{0} &= \left(
    \begin{array}{c|c}
    \m{A} & \m{G}_{0} \m{M}_{0}^{*} \\
    \hline
    \m{M}_{0} \m{G}_{0}^{*} & \m{B}
    \end{array}
    \right) = \left(
    \begin{array}{c}
    \m{G}_{0} \\
    \hline
    \m{M}_{0}
    \end{array}
    \right) \left(
    \begin{array}{c|c}
    \m{G}_{0}^{*} & \m{M}_{0}^{*}
    \end{array}
    \right),
\end{align*}
the spectrums of $\m{\Sigma}_{\varepsilon}^{1/2} \m{\Sigma}_{0} \m{\Sigma}_{\varepsilon}^{1/2}$ and
\begin{align*}
    \m{Q}_{\varepsilon} &= \left[\m{\Sigma}_{\varepsilon}^{1/2} \left(
    \begin{array}{c}
    \m{G}_{0} \\
    \hline
    \m{M}_{0}
    \end{array}
    \right) \right]^{*} \left[\m{\Sigma}_{\varepsilon}^{1/2} \left(
    \begin{array}{c}
    \m{G}_{0} \\
    \hline
    \m{M}_{0}
    \end{array}
    \right) \right] = \left(
    \begin{array}{c|c}
    \m{G}_{0}^{*} & \m{M}_{0}^{*}
    \end{array}
    \right) \m{\Sigma}_{\varepsilon} \left(
    \begin{array}{c}
    \m{G}_{0} \\
    \hline
    \m{M}_{0}
    \end{array}
    \right) \\
    &= (\m{G}_{0}^{*} \m{G}_{0})^{2} + (\m{M}_{0}^{*} \m{M}_{0})^{2} + (\m{G}_{0}^{*} \m{G}_{0}) \m{R}_{\varepsilon} (\m{M}_{0}^{*} \m{M}_{0}) + (\m{M}^{*} \m{M}) \m{R}_{\varepsilon} (\m{G}_{0}^{*} \m{G}_{0}),
\end{align*}
coincide. Therefore, we conclude that
\begin{align*}
    \c{W}_{2}^{2}(\pi_{\varepsilon}, \pi_{0}) = \tr[\m{\Sigma}_{\varepsilon}] + \tr[\m{\Sigma}_{0}] - 2 \tr [\m{\Sigma}_{\varepsilon}^{1/2} \m{\Sigma}_{0} \m{\Sigma}_{\varepsilon}^{1/2}] = 2 \tr [\m{A} + \m{B}- \sqrt{\m{Q}_{\varepsilon}}].
\end{align*}
\end{proof}

To investigate the asymptotic rate, we begin by the benchmark case $\m{A} = \m{B}$, where the simplification follows immediately by considering $\m{G}_0 = \m{M}_0 = \m{A}^{1/2}$ in \cref{thm:W2:exact}: $\c{W}_{2}^{2}(\pi_{\varepsilon}, \pi_{0}) = 2 \tr [\m{A} \delta_{\varepsilon}(\m{A})]$.
Here, the \textbf{perturbation function} $\delta_{\varepsilon} : [0, \infty) \to [0, 2-\sqrt{2})$ is defined as:
\begin{align}\label{eq:def:pert:ftn}
    \delta_{\varepsilon}(x) = \begin{cases}
        2 -  \sqrt{2(1+f_{\varepsilon}(x))} &, \quad x>0, \\
        0 &, \quad x=0,
    \end{cases}
\end{align}
as depicted in \cref{fig:three_plots}. Hence, if $\m{A}=\m{B}$ and they have a finite rank, then one can show:
\begin{align}\label{eq:univ:bdd:same}
    \lim_{\varepsilon \downarrow 0} \frac{x \delta_{\varepsilon}(x)}{\varepsilon} = \frac{1}{4} \quad \implies \quad \lim_{\varepsilon \downarrow 0} \frac{\c{W}_{2}^{2}(\pi_{\varepsilon}, \pi_{0})}{\varepsilon} = \frac{\mathrm{rank}(\m{A})}{2},
\end{align}
using the scaling property of the perturbation function $\delta_{\varepsilon}$ in \cref{lem:pert:ftn:mono}. However, in the infinite-dimensional setup, the nonparametric rate even of the simpliest scenario in the following corollary alludes that the exact asymptotic rate is intractable without further structural assumptions. 

\begin{corollary}[Nonparametric Rate]
Let $\varepsilon > 0$ and $\m{A} = \m{B} \in \c{B}_{1}^{+}(\c{H})$. If the eigenvalues of $\m{A}$ decay polynomially as $\lambda_{k} = c \, k^{-\alpha}$ for some $c > 0$ and $\alpha > 1$, then the exact asymptotic convergence rate is:
\begin{align*}
    \lim_{\varepsilon \downarrow 0} \frac{\c{W}_{2}^{2}(\pi_{\varepsilon}, \pi_{0})}{\varepsilon^{1-1/\alpha}} = 2 c^{1/\alpha} \int_{0}^{\infty} t^{-\alpha} \delta_{1}(t^{-\alpha}) \, \rd t \in (0, \infty).
\end{align*}
\end{corollary}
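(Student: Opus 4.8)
The plan is to reduce everything to the identity $\c{W}_{2}^{2}(\pi_{\varepsilon}, \pi_{0}) = 2 \tr [\m{A} \delta_{\varepsilon}(\m{A})]$ established just above for $\m{A} = \m{B}$, expand the trace in the eigenbasis of $\m{A}$, and recognize the resulting series as a Riemann sum, mirroring the argument used in the second part of \cref{cor:conv:dist}. Concretely, I would first write, using functional calculus and the simplicity of the eigenvalues $\lambda_{k} = c\,k^{-\alpha}$,
\begin{align*}
    \c{W}_{2}^{2}(\pi_{\varepsilon}, \pi_{0}) = 2 \sum_{k=1}^{\infty} \lambda_{k}\, \delta_{\varepsilon}(\lambda_{k}) = 2 \sum_{k=1}^{\infty} \lambda_{k}\, \delta_{1}(\lambda_{k}/\varepsilon),
\end{align*}
where the last step invokes the scaling property $\delta_{\varepsilon}(x) = \delta_{1}(x/\varepsilon)$ recorded in \cref{lem:pert:ftn:mono}. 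Substituting $\lambda_{k} = c\,k^{-\alpha}$ and setting $u_{k} := k\,(\varepsilon/c)^{1/\alpha}$ yields $\lambda_{k} = \varepsilon\, u_{k}^{-\alpha}$, $\lambda_{k}/\varepsilon = u_{k}^{-\alpha}$, and a uniform mesh $\Delta u = u_{k+1} - u_{k} = (\varepsilon/c)^{1/\alpha}$, so that
\begin{align*}
    \frac{\c{W}_{2}^{2}(\pi_{\varepsilon}, \pi_{0})}{\varepsilon^{1-1/\alpha}} = 2 \varepsilon^{1/\alpha} \sum_{k=1}^{\infty} u_{k}^{-\alpha}\, \delta_{1}(u_{k}^{-\alpha}) = 2 c^{1/\alpha} \sum_{k=1}^{\infty} \phi(u_{k})\, \Delta u, \qquad \phi(t) := t^{-\alpha}\, \delta_{1}(t^{-\alpha}).
\end{align*}

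Next I would verify that $\phi$ is nonnegative, monotone decreasing, and integrable on $(0, \infty)$, so the right-hand side is a genuine Riemann sum converging to $2 c^{1/\alpha} \int_{0}^{\infty} \phi(t)\,\rd t$. Monotonicity is immediate once one knows that $y \mapsto y\,\delta_{1}(y)$ is monotone increasing on $(0,\infty)$ (a property in \cref{lem:pert:ftn:mono}), since $t \mapsto t^{-\alpha}$ is decreasing. For integrability I would use the two-sided asymptotics of $\delta_{1}$: from $\delta_{1}(y) \sim 1/(4y)$ as $y \to \infty$ one gets $\phi(t) \to 1/4$ as $t \downarrow 0$, so $\phi$ is bounded near the origin and causes no trouble there; from $\delta_{1}(y) \to 2-\sqrt{2}$ as $y \downarrow 0$ one gets $\phi(t) \sim (2-\sqrt{2})\,t^{-\alpha}$ as $t \to \infty$, integrable precisely because $\alpha > 1$. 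Positivity of $\delta_{1}$ on $(0,\infty)$ (immediate from $f_{\varepsilon}(x) < 1$) gives $\int_{0}^{\infty}\phi > 0$, while the two bounds give $\int_{0}^{\infty}\phi < \infty$; together these place the limit in $(0,\infty)$.

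The one mildly delicate point — and the main obstacle — is the behavior of $\phi$ at the left endpoint. Unlike in \cref{cor:conv:dist}, where the integrand diverged only logarithmically, here $\phi$ does not vanish at $t = 0$ but tends to the constant $1/4$, while the first node $u_{1} = (\varepsilon/c)^{1/\alpha}$ shrinks to $0$. This is harmless because, for a monotone decreasing $\phi$ vanishing at infinity, the sum $\sum_{k\ge 1}\phi(u_{k})\,\Delta u$ is sandwiched as $\int_{u_{1}}^{\infty}\phi \le \sum_{k\ge 1}\phi(u_{k})\,\Delta u \le \int_{0}^{\infty}\phi$, and $\int_{u_{1}}^{\infty}\phi \to \int_{0}^{\infty}\phi$ as $u_{1}\downarrow 0$ by integrability at $0$. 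Letting $\varepsilon \downarrow 0$ then forces the Riemann sum to the integral, giving $\lim_{\varepsilon\downarrow 0}\c{W}_{2}^{2}(\pi_{\varepsilon},\pi_{0})/\varepsilon^{1-1/\alpha} = 2 c^{1/\alpha}\int_{0}^{\infty} t^{-\alpha}\,\delta_{1}(t^{-\alpha})\,\rd t \in (0,\infty)$, as claimed.
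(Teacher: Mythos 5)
Your proposal is correct and follows essentially the same route as the paper: you start from the identity $\c{W}_{2}^{2}(\pi_{\varepsilon}, \pi_{0}) = 2 \tr [\m{A} \delta_{\varepsilon}(\m{A})]$, expand the trace spectrally with $\lambda_k = c\,k^{-\alpha}$, invoke the scaling property $\delta_{\varepsilon}(x) = \delta_1(x/\varepsilon)$ from \cref{lem:pert:ftn:mono}, and identify the series as a Riemann sum with mesh $\Delta u = (\varepsilon/c)^{1/\alpha}$ for the integrand $t \mapsto t^{-\alpha}\delta_1(t^{-\alpha})$, which tends to the constant $1/4$ near $0$ and decays like $(2-\sqrt{2})\,t^{-\alpha}$ at infinity. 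The only small addition you make beyond the paper's sketch is the explicit monotone-sandwich argument $\int_{u_1}^\infty \phi \le \sum_k \phi(u_k)\,\Delta u \le \int_0^\infty \phi$, which is a clean way to justify the Riemann-sum-to-integral passage that the paper states without elaboration; this is a welcome bit of rigor, not a different approach.
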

\begin{proof}
Recall the scaling property from \cref{lem:pert:ftn:mono}:
\begin{align*}
    \frac{x \delta_{\varepsilon}(x)}{\varepsilon} = \psi \left( \frac{\varepsilon}{2x} \right) = \frac{(x/\varepsilon) \delta_{1}(x/\varepsilon)}{1} \quad \implies \delta_{\varepsilon}(x) = \delta_1(x/\varepsilon).
\end{align*}
We express the trace as a spectral sum by substituting $\lambda_k = c k^{-\alpha}$:
\begin{align*}
    \c{W}_{2}(\pi_{\varepsilon}, \pi_{0}) = 2 \sum_{k=1}^{\infty} \lambda_k \delta_{\varepsilon}(\lambda_k) = 2 \sum_{k=1}^{\infty} c k^{-\alpha} \delta_{1}\left( \frac{c k^{-\alpha}}{\varepsilon} \right).
\end{align*}
We identify this sum as a Riemann approximation of an integral. Let $u_k := k (\varepsilon/c)^{1/\alpha}$. Then $k = u_k (c/\varepsilon)^{1/\alpha}$ and the mesh size is $\Delta u = (\varepsilon/c)^{1/\alpha}$. The term inside the sum becomes:
\begin{align*}
    c k^{-\alpha} \delta_{1}\left( \frac{c k^{-\alpha}}{\varepsilon} \right) = c \left( u_k (c/\varepsilon)^{1/\alpha} \right)^{-\alpha} \delta_1(u_k^{-\alpha}) = \varepsilon u_k^{-\alpha} \delta_1(u_k^{-\alpha}).
\end{align*}
Substituting this back into the sum:
\begin{align*}
    \c{W}_{2}(\pi_{\varepsilon}, \pi_{0}) = \sum_{k=1}^{\infty} \varepsilon u_k^{-\alpha} \delta_1(u_k^{-\alpha}) 
    = \varepsilon (\varepsilon/c)^{-1/\alpha} \sum_{k=1}^{\infty} u_k^{-\alpha} \delta_1(u_k^{-\alpha}) \Delta u 
    = c^{1/\alpha} \varepsilon^{1-1/\alpha} \left[ \sum_{k=1}^{\infty} u_k^{-\alpha} \delta_1(u_k^{-\alpha}) \Delta u \right].
\end{align*}
Since the function $h(t) = t^{-\alpha} \delta_1(t^{-\alpha})$ is Riemann integrable on $(0, \infty)$ (decaying as $O(t^{-\alpha})$ at infinity and bounded by $O(t^\alpha \cdot t^{-\alpha}) \sim O(1)$ near zero due to \cref{lem:pert:ftn:mono}), the sum converges to the integral $\int_0^\infty h(t) dt$ as $\Delta u \to 0$ (i.e., $\varepsilon \to 0$). Thus, the limit exists and is strictly positive.
\end{proof}

\begin{figure}[h!]
    \centering
    \begin{tikzpicture}[scale=0.8] 
        \def\funcF#1#2{ (2*(#1) / (sqrt(4*(#1)*(#1) + (#2)*(#2)) + (#2))) }
        
        \def\funcD#1#2{ (2 - sqrt(2 * (1 + \funcF{#1}{#2}))) }

        \begin{scope}[local bounding box=LeftPlot, yscale=6.5]
            \draw[->] (-0.2, 0) -- (4.2, 0) node[right] {$x$};
            \draw[->] (0, -0.05) -- (0, 0.7) node[above] {$\delta_{\varepsilon}(x)$};
            \node at (0,0) [below left] {$0$};

            \draw (-0.1, 0.5857) -- (0.1, 0.5857);
            \node[left, align=right, font=\scriptsize] at (-0.1, 0.5857) {$2-\sqrt{2}$};

            \draw[line width=1.5pt, black, opacity=0.3] (0.02, 0) -- (4.0, 0); 

            \draw[thick, black, domain=0:4.0, samples=100] plot (\x, {\funcD{\x}{2.0}});
            \draw[thick, black, dashed, domain=0:4.0, samples=100] plot (\x, {\funcD{\x}{1.0}});
            \draw[thick, black, dotted, line width=1pt, domain=0:4.0, samples=200] plot (\x, {\funcD{\x}{0.2}});

            \node[circle, fill=white, draw=black, thick, inner sep=1.2pt] at (0, 0.5857) {};

            \node[anchor=north] at (2, -0.15) {(a) Decay};
        \end{scope}

        \begin{scope}[xshift=5.2cm, local bounding box=MidPlot, yscale=1.1]
            \draw[->] (-0.2, 0) -- (4.2, 0) node[right] {$x$};
            \draw[->] (0, -0.2) -- (0, 4.2) node[above] {$\delta_{\varepsilon}(x)/\varepsilon$};
            \node at (0,0) [below left] {$0$};

            \draw[line width=1.5pt, black, opacity=0.2, domain=0.06:4.0, samples=100] 
                plot (\x, { 1 / (4*\x) });
            \node[gray, font=\small] at (1.5, 0.8) {$\sim \frac{1}{4x}$};

            \draw[thick, black, domain=0:4.0, samples=100] plot (\x, { \funcD{\x}{2.0} / 2.0 });
            \draw[thick, black, dashed, domain=0:4.0, samples=100] plot (\x, { \funcD{\x}{1.0} / 1.0 });
            \draw[thick, black, dotted, line width=1pt, domain=0.02:4.0, samples=200] plot (\x, { \funcD{\x}{0.2} / 0.2 });

            \node[anchor=north] at (2, -0.9) {(b) Singularity};
        \end{scope}

        \begin{scope}[xshift=10.4cm, local bounding box=RightPlot, yscale=12]
            \draw[->] (-0.2, 0) -- (4.2, 0) node[right] {$x$};
            \draw[->] (0, -0.02) -- (0, 0.35) node[above] {$\frac{x \delta_{\varepsilon}(x)}{\varepsilon}$};
            \node at (0,0) [below left] {$0$};

            \draw[gray, thin] (0, 0.25) -- (4.0, 0.25);
            \node[gray, font=\scriptsize, above] at (3.5, 0.25) {$1/4$};

            \draw[thick, black, domain=0:4.0, samples=100] plot (\x, { \x * \funcD{\x}{2.0} / 2.0 });
            \draw[thick, black, dashed, domain=0:4.0, samples=100] plot (\x, { \x * \funcD{\x}{1.0} / 1.0 });
            \draw[thick, black, dotted, line width=1pt, domain=0:4.0, samples=200] plot (\x, { \x * \funcD{\x}{0.2} / 0.2 });

            \node[anchor=north] at (2, -0.08) {(c) Stability};
        \end{scope}

        \node[draw, fill=white, align=left, anchor=west, inner sep=4pt, rounded corners=2pt] 
            at (15.0, 2.0) {
            \begin{tikzpicture}[baseline=(current bounding box.center), yscale=0.8]
                \draw[line width=1.5pt, black, opacity=0.3] (0, 1.2) -- (0.6, 1.2);
                \node[anchor=west, font=\scriptsize] at (0.7, 1.2) {Limit};

                \draw[thick, black, dotted, line width=1pt] (0, 0.8) -- (0.6, 0.8);
                \node[anchor=west, font=\scriptsize] at (0.7, 0.8) {$\varepsilon=0.2$};

                \draw[thick, black, dashed] (0, 0.4) -- (0.6, 0.4);
                \node[anchor=west, font=\scriptsize] at (0.7, 0.4) {$\varepsilon=1.0$};

                \draw[thick, black] (0, 0) -- (0.6, 0);
                \node[anchor=west, font=\scriptsize] at (0.7, 0) {$\varepsilon=2.0$};
            \end{tikzpicture}
        };

    \end{tikzpicture}
    \caption{Behavior of the perturbation function. (a) $\delta_{\varepsilon}(x) = \delta_1(x/\varepsilon)$ decays rapidly but starts at a fixed singularity at $x=0$. (b) $\delta_{\varepsilon}(x)/\varepsilon$ reveals a $1/x$ singularity near the origin, which suggests a nonparametric rate. (c) Weighting by $x$ regularizes the singularity, bounded by $1/4$.}
    \label{fig:three_plots}
\end{figure}

We present two upper bounds of the square Wasserstein distance:
\begin{theorem}[Upper Bounds]\label{thm:upper:bdds}
Let $\varepsilon > 0$ and $\m{A}, \m{B} \in \c{B}_{1}^{+}(\c{H})$ with $\m{A} \leadsto \m{B}$. Then, the squared Wasserstein distance between the Gaussian EOT coupling $\pi_{\varepsilon} = \c{N}(\m{0}, \m{\Sigma}_{\varepsilon})$ and the canonical OT coupling $\pi_{0} = \c{N}(\m{0}, \m{\Sigma}_{0})$ can be bounded by:
\begin{align}
    &\c{W}_{2}^{2}(\pi_{\varepsilon}, \pi_{0}) \le \tr \left[(\m{G}_{0}^{*} \m{G}_{0} + \m{M}_{0}^{*} \m{M}_{0}) \delta_{\varepsilon}(\m{P}^{1/2}) \right], \label{eq:W2:pert:bdd1} \\
    &\c{W}_{2}^{2}(\pi_{\varepsilon}, \pi_{0}) \le 2 \min \left[ \tr \left((\m{G}_{0}^{*} \m{G}_{0}) \eta_{\varepsilon}(\m{P}^{1/2}) \right), \tr \left((\m{M}_{0}^{*} \m{M}_{0}) \eta_{\varepsilon}(\m{P}^{1/2}) \right) \right]. \label{eq:W2:pert:bdd2}
\end{align}
where $\eta_{\varepsilon} : [0, \infty) \to [0, 1)$ is defined as:
\begin{align*}
    \eta_{\varepsilon}(x) = \begin{cases}
        1-f_{\varepsilon}(x) &, \quad x>0, \\
        0 &, \quad x=0,
    \end{cases}
\end{align*}
As $\varepsilon \to 0$, we have:
\begin{align*}
    &\lim_{\varepsilon \downarrow 0} \tr \left((\m{G}_{0}^{*} \m{G}_{0} + \m{M}_{0}^{*} \m{M}_{0}) \delta_{\varepsilon}(\m{P}^{1/2}) \right) = (2-\sqrt{2}) \tr (\m{M}_{0} \m{\Pi}_{\c{N}(\m{P})} \m{M}_{0}^{*}), \\
    &\lim_{\varepsilon \downarrow 0} \tr \left[ \m{G}_{0}^{*} \m{G}_{0} \eta_{\varepsilon} (\m{P}^{1/2}) \right] = \lim_{\varepsilon \downarrow 0} \tr \left[ \m{M}_{0}^{*} \m{M}_{0} \eta_{\varepsilon} (\m{P}^{1/2}) \right] = 0.
\end{align*}
Note that $\tr (\m{M}_{0} \m{\Pi}_{\c{N}(\m{P})} \m{M}_{0}^{*}) = 0$ if and only if the Schur complement vanishes, i.e. $\m{B}/\m{A} = \m{0}$.
\end{theorem}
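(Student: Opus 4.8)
The plan is to obtain both inequalities from the closed form $\c{W}_2^2(\pi_\varepsilon,\pi_0) = 2\tr[\m{A}+\m{B}-\sqrt{\m{Q}_\varepsilon}]$ of \cref{thm:W2:exact} together with operator monotonicity of $\sqrt{\cdot}$, writing $\m{F} := \m{M}_0^*\m{M}_0$ (so $\tr\m{F} = \tr\m{B}$), noting $\m{G}_0^*\m{G}_0 = \m{A}$, and recalling $\m{R}_\varepsilon = f_\varepsilon(\m{P}^{1/2})$. For \eqref{eq:W2:pert:bdd1}, expanding $\m{Q}_\varepsilon = \m{A}^2+\m{F}^2+\m{A}\m{R}_\varepsilon\m{F}+\m{F}\m{R}_\varepsilon\m{A}$ yields the identity $\m{Q}_\varepsilon - \tfrac12(\m{A}+\m{F})(\m{I}+\m{R}_\varepsilon)(\m{A}+\m{F}) = \tfrac12(\m{A}-\m{F})(\m{I}-\m{R}_\varepsilon)(\m{A}-\m{F})$, whose right side is a conjugate of the n.n.d. operator $\m{I}-\m{R}_\varepsilon$; hence $\m{Q}_\varepsilon \succeq \tfrac12(\m{A}+\m{F})(\m{I}+\m{R}_\varepsilon)(\m{A}+\m{F})$. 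Operator monotonicity then gives $\tr\sqrt{\m{Q}_\varepsilon} \ge \tfrac1{\sqrt2}\tr\big|(\m{I}+\m{R}_\varepsilon)^{1/2}(\m{A}+\m{F})\big| \ge \tfrac1{\sqrt2}\tr\big[(\m{A}+\m{F})(\m{I}+\m{R}_\varepsilon)^{1/2}\big]$, the last step because $\tr\m{X}\le\tr|\m{X}|$ and the trace equals the nonnegative number $\tr\big[(\m{A}+\m{F})^{1/2}(\m{I}+\m{R}_\varepsilon)^{1/2}(\m{A}+\m{F})^{1/2}\big]$. Plugging back and using $\tr(\m{A}+\m{B}) = \tr(\m{A}+\m{F})$, we get $\c{W}_2^2(\pi_\varepsilon,\pi_0) \le \tr\big[(\m{A}+\m{F})\big(2\m{I}-\sqrt{2(\m{I}+\m{R}_\varepsilon)}\big)\big]$; since $2-\sqrt{2(1+f_\varepsilon(x))} = \delta_\varepsilon(x)$ on $(0,\infty)$ the bracket is $\delta_\varepsilon(\m{P}^{1/2})$, which is \eqref{eq:W2:pert:bdd1}.

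For \eqref{eq:W2:pert:bdd2} I would instead build explicit Green's operators of the two block covariances and use the Procrustes bound $\c{W}_2^2(\m{\Sigma}_\varepsilon,\m{\Sigma}_0)\le\vertii{\m{S}_\varepsilon-\m{S}_0}_2^2$ from \eqref{eq:W2:bdd:HS}. Take the one-sided factor $\m{S}_\bullet := \left(\begin{smallmatrix}\m{G}_0\m{R}_\bullet & \m{G}_0(\m{I}-\m{R}_\bullet^2)^{1/2}\\\m{M}_0 & \m{0}\end{smallmatrix}\right)$ for $\bullet\in\{\varepsilon,0\}$, with $\m{R}_0 = f_0(\m{P}^{1/2}) = \m{\Pi}_{\overline{\c{R}(\m{P})}}$; one checks $\m{S}_\bullet\m{S}_\bullet^* = \m{\Sigma}_\bullet$ (using $\m{G}_0\m{R}_0\m{M}_0^* = \m{C}_0$ from the proof of \cref{thm:EOT:conv}) and $\m{S}_\bullet\in\c{B}_2$. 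The difference has a single nonzero block-row, so $\vertii{\m{S}_\varepsilon-\m{S}_0}_2^2 = \tr\big[\m{A}\big\{(\m{R}_\varepsilon-\m{R}_0)^2 + \big((\m{I}-\m{R}_\varepsilon^2)^{1/2}-(\m{I}-\m{R}_0^2)^{1/2}\big)^2\big\}\big]$, and the elementary scalar identity $(1-r)^2 + (1-r^2) = 2(1-r)$ (with $r=f_\varepsilon(x)$, $r_0=f_0(x)\in\{0,1\}$) collapses the bracket to $2\eta_\varepsilon(\m{P}^{1/2})$. The mirror choice swapping $\m{G}_0\leftrightarrow\m{M}_0$ gives the $\m{M}_0^*\m{M}_0$ version, and taking the smaller of the two yields \eqref{eq:W2:pert:bdd2}.

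The limiting statements all reduce to dominated convergence against the trace-class spectral measure $\{\m{\Pi}_k\}$ of $\m{P}^{1/2}$. Since $f_\varepsilon(x)\uparrow1$ for each $x>0$ as $\varepsilon\downarrow0$, both $\eta_\varepsilon(x)=1-f_\varepsilon(x)$ and $2-\sqrt{2(1+f_\varepsilon(x))}$ decrease to $0$ pointwise on $(0,\infty)$ while staying bounded by $1$. Hence $\tr[\m{G}_0^*\m{G}_0\,\eta_\varepsilon(\m{P}^{1/2})] = \sum_{s_k>0}\eta_\varepsilon(s_k)\tr[\m{A}\m{\Pi}_k]\to0$, the summand being dominated by $\tr[\m{A}\m{\Pi}_k]$ with $\sum_k\tr[\m{A}\m{\Pi}_k]\le\tr(\m{A})<\infty$; identically for $\m{M}_0^*\m{M}_0$. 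For the $\delta_\varepsilon$-limit the positive-mode part vanishes the same way, and the only survivor is the $\c{N}(\m{P})$-contribution of $2\m{I}-\sqrt{2(\m{I}+\m{R}_\varepsilon)}$: there $\m{R}_\varepsilon=\m{0}$, so this operator is $(2-\sqrt2)\m{\Pi}_{\c{N}(\m{P})}$, whence $\lim_{\varepsilon\downarrow0}\tr\big[(\m{G}_0^*\m{G}_0+\m{M}_0^*\m{M}_0)\delta_\varepsilon(\m{P}^{1/2})\big]$ equals $(2-\sqrt2)$ times the null-mode trace, which — via $\m{G}_0\m{\Pi}_{\c{N}(\m{P})}\m{M}_0^* = \m{0}$ and the block computation $\m{M}_0\m{\Pi}_{\c{N}(\m{P})}\m{M}_0^* = \mathrm{diag}(\m{0},\m{B}/\m{A})$ (the same one identifying $\m{C}_0$) — reduces to $(2-\sqrt2)\tr(\m{M}_0\m{\Pi}_{\c{N}(\m{P})}\m{M}_0^*) = (2-\sqrt2)\tr(\m{B}/\m{A})$. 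The last assertion is then immediate, since $\m{B}/\m{A}\in\c{B}_1^+(\c{H}_2)$ has vanishing trace iff $\m{B}/\m{A}=\m{0}$.

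The main obstacle I anticipate is the degenerate bookkeeping on $\c{N}(\m{P})$: for $\varepsilon>0$ the functions $\delta_\varepsilon,\eta_\varepsilon$ are discontinuous at $0$, so one must track exactly which branch of the operator square roots ($\sqrt{2(\m{I}+\m{R}_\bullet)}$, $(\m{I}-\m{R}_\bullet^2)^{1/2}$) is used — this decides whether the null modes carry mass $0$ or $2-\sqrt2$ — and one must confirm each $\m{S}_\bullet$ is a genuine Hilbert–Schmidt Green's operator of the correct $\c{H}\times\c{H}$ covariance, and that the trace manipulations (cyclicity, interchange of limit and trace) are legitimate even though $\m{G}_0^*\m{G}_0$ and $\m{M}_0^*\m{M}_0$ do not commute with $\m{P}^{1/2}$; the scalar identities are routine.
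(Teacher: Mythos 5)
For \eqref{eq:W2:pert:bdd1} you take a genuinely different route than the paper: you start from the closed form of \cref{thm:W2:exact}, sandwich $\m{Q}_\varepsilon$ below by the square $\tfrac12(\m{A}+\m{F})(\m{I}+\m{R}_\varepsilon)(\m{A}+\m{F})$, and then apply operator monotonicity of the square root followed by $\tr\m{X}\le\tr|\m{X}|$. The paper instead factors both $\m{\Sigma}_\varepsilon$ and $\m{\Sigma}_0$ coherently via the Hadamard conjugation $\m{\Sigma}_\bullet=(\m{S}\m{H})\m{D}_\bullet(\m{S}\m{H})^*$ and uses the Procrustes bound \eqref{eq:W2:bdd:HS} directly. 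The trouble with your route is precisely the issue you flag at the end: your chain produces the operator $2\m{I}-\sqrt{2(\m{I}+\m{R}_\varepsilon)}$ inside the trace, which agrees with $\delta_\varepsilon(\m{P}^{1/2})$ only on $\overline{\c{R}(\m{P})}$ --- on $\c{N}(\m{P})$ it equals $(2-\sqrt2)\m{\Pi}_{\c{N}(\m{P})}$, whereas the functional-calculus definition \eqref{eq:def:pert:ftn} sets $\delta_\varepsilon(0)=0$. So what you actually prove is $\c{W}_2^2\le\tr[(\m{A}+\m{F})\delta_\varepsilon(\m{P}^{1/2})]+(2-\sqrt2)\tr[(\m{A}+\m{F})\m{\Pi}_{\c{N}(\m{P})}]$, which is strictly weaker than \eqref{eq:W2:pert:bdd1} whenever $\c{N}(\m{P}_{11})\neq\{0\}$ (so that $\tr[\m{A}\m{\Pi}_{\c{N}(\m{P})}]>0$). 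The paper's Hadamard factorization is designed so that the resulting scalar appearing in the trace is built from \emph{both} $\m{R}_\varepsilon$ and $\m{R}_0$, and since both vanish on $\c{N}(\m{P})$ the null modes genuinely carry zero mass; your approach loses that cancellation after the operator-monotone/$\tr|\cdot|$ relaxations. You do not resolve this --- it is not merely bookkeeping, it changes the bound.

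Your derivation of \eqref{eq:W2:pert:bdd2} is essentially the paper's Cholesky argument in transposed form, and it is correct; the paper's LU/UL pair corresponds to your two choices of which Green's operator sits in the constant block row. For the limits, the $\eta_\varepsilon$ claims are fine (dominated convergence against the spectral measure, exactly as the paper does via \cref{lem:Holder:SOT}). For the $\delta_\varepsilon$ limit, however, you silently carry over the weaker bound and then need the null-mode trace of $\m{A}+\m{F}$ to collapse to $\tr(\m{M}_0\m{\Pi}_{\c{N}(\m{P})}\m{M}_0^*)$; the identity you cite, $\m{G}_0\m{\Pi}_{\c{N}(\m{P})}\m{M}_0^*=\m{0}$, is not the relevant one --- what is needed is $\tr[\m{G}_0\m{\Pi}_{\c{N}(\m{P})}\m{G}_0^*]=\tr[\m{A}_{11}\m{\Pi}_{\c{N}(\m{P}_{11})}]=0$, which fails in general because $\m{A}_{11}$ is injective on $\c{H}_1$ while $\c{N}(\m{P}_{11})=\c{N}(\m{B}_{11}^{1/2}\m{A}_{11}^{1/2})$ can be nontrivial even under reachability (consider $\m{A}=\mathrm{diag}(1,1,0)$, $\m{B}=\mathrm{diag}(1,0,1)$). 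You should either stay with the functional-calculus $\delta_\varepsilon(0)=0$ (then the limit follows the same dominated-convergence argument as for $\eta_\varepsilon$) or, if working with $2\m{I}-\sqrt{2(\m{I}+\m{R}_\varepsilon)}$, be explicit that the limit picks up the additional $(2-\sqrt{2})\tr[\m{A}\m{\Pi}_{\c{N}(\m{P})}]$ term and that your proved bound is not \eqref{eq:W2:pert:bdd1} as stated.
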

\begin{proof}
Recall from \eqref{eq:W2:bdd:HS} that the Wasserstein distance can be bounded from above by the Hilbert-Schmidt distance between Green's operators. We then consider two different factorizations. Recall from \cref{thm:EOT:conv} that $\m{R}_{\varepsilon} = f_{\varepsilon}(\m{G}_{0}^{*} \m{M}_{0}) \xrightarrow{SOT} \m{R}_{0} = f_{0}(\m{G}_{0}^{*} \m{M}_{0}) = \m{\Pi}_{\overline{\c{R}(\m{P})}}$.

\begin{enumerate}[leftmargin = *]
\item \textbf{Spectral Decomposition}: Let $\m{S} = \text{diag}(\m{G}_{0}, \m{M}_{0})$ to ease the notation, and introduce the Hadamard operator, analogous to the Hadamard gate in quantum mechanics \cite{hall2013quantum}:
\begin{align*}
    \m{H} = \frac{1}{\sqrt{2}} \left(
    \begin{array}{c|c}
    \m{I} & \m{I} \\
    \hline
    \m{I} & - \m{I}
    \end{array}
    \right) \in \c{B}_{\infty}(\c{H} \times \c{H}).
\end{align*}
Note that $\m{H}$ is unitary and symmetric, with eigenvalues $\pm 1$. Let $\m{P} := \m{A}^{1/2} \m{B} \m{A}^{1/2}$. Under the choice $\m{N}_{12}=\m{0}$, we have $\m{G}^*\m{M} = \m{P}^{1/2}$. The spectral shrinkage converges strongly, 
The covariances factorize as:
\begin{align*}
    \m{\Sigma}_{\varepsilon} = \m{S} \left(
    \begin{array}{c|c}
    \m{I} & \m{R}_{\varepsilon} \\
    \hline
    \m{R}_{\varepsilon} & \m{I}
    \end{array}
    \right) \m{S}^* = (\m{S} \m{H}) \m{D}_{\varepsilon} (\m{S} \m{H})^{*}, \quad \m{S} = \text{diag}(\m{G}, \m{M}), \quad \m{D}_{\varepsilon} = \text{diag}(\m{I} + \m{R}_{\varepsilon}, \m{I} - \m{R}_{\varepsilon}),
\end{align*}
and similarly:
\begin{align*}
    \m{\Sigma}_{0} = (\m{S} \m{H}) \m{D}_{0} (\m{S} \m{H})^{*}, \quad \m{D}_{0} = \text{diag}(\m{I} + \m{R}_{0}, \m{I} - \m{R}_{0}).
\end{align*}

Since $\m{S} \m{H} \m{D}_{\varepsilon}^{1/2} \in \c{G}(\m{\Sigma}_{\varepsilon})$ for any $\varepsilon \ge 0$, we get from \eqref{eq:W2:bdd:HS} that
\begin{align*}
    \c{W}_{2}^{2}(\m{\Sigma}_{\varepsilon}, \m{\Sigma}_{0}) \le \vertii{\m{S} \m{H} (\m{D}_{\varepsilon}^{1/2} -  \m{D}_{0}^{1/2})}_{2}^{2} = \tr [(\m{D}_{\varepsilon}^{1/2} -  \m{D}_{0}^{1/2})^{2} (\m{S} \m{H})^{*} (\m{S} \m{H})].
\end{align*}
It is the direct computation that
\begin{align*}
    (\m{S} \m{H})^{*} (\m{S} \m{H}) = \frac{1}{2} \left(
    \begin{array}{c|c}
        (\m{G}_{0}^{*} \m{G}_{0} + \m{M}_{0}^{*} \m{M}_{0}) & (\m{G}_{0}^{*} \m{G}_{0} - \m{M}_{0}^{*} \m{M}_{0}) \\
        \hline
        (\m{G}_{0}^{*} \m{G}_{0} - \m{M}_{0}^{*} \m{M}_{0}) & (\m{G}_{0}^{*} \m{G}_{0} + \m{M}_{0}^{*} \m{M}_{0})
    \end{array}
    \right).
\end{align*}
The operator $(\m{D}_{\varepsilon}^{1/2} - \m{D}_{0}^{1/2})^{2}$ is block diagonal. Multiplying and taking the trace, the off-diagonal terms vanish. We remain with:
\begin{align*}
    &\tr [(\m{D}_{\varepsilon}^{1/2} -  \m{D}_{0}^{1/2})^{2} (\m{S} \m{H})^{*} (\m{S} \m{H})] \\
    &= \frac{1}{2} \tr \left((\m{G}_{0}^{*} \m{G}_{0} + \m{M}_{0}^{*} \m{M}_{0}) [(\sqrt{\m{I} + \m{R}_{\varepsilon}} - \sqrt{\m{I} + \m{R}_{0}})^{2} + (\sqrt{\m{I} - \m{R}_{\varepsilon}} - \sqrt{\m{I} - \m{R}_{0}})^{2}] \right) \\
    &= \tr \left((\m{G}_{0}^{*} \m{G}_{0} + \m{M}_{0}^{*} \m{M}_{0}) \delta_{\varepsilon}(\m{P}^{1/2}) \right) ,
\end{align*}
In the last line, we used the fact that
\begin{align*}
    \delta_{\varepsilon}(x) = \frac{1}{2} \left[ \sqrt{1+f_{\varepsilon}(x)} - \sqrt{1+f_{0}(x)}]^{2} + [\sqrt{1-f_{\varepsilon}(x)} - \sqrt{1-f_{0}(x)} \right]^{2}, \quad x \in [0, \infty), \quad \varepsilon >0.
\end{align*}
Finally, we show that the upper bound converges to $0$ as $\varepsilon \to 0$ if and only if $\m{B}/\m{A} = \m{0}$. To this end, observe that $\delta_{\varepsilon}(\m{P}^{1/2}) \xrightarrow{SOT} (2-\sqrt{2}) \m{\Pi}_{\c{N}(\m{P})}$ due to \cref{lem:pert:ftn:mono}, thus \cref{lem:Holder:SOT} yields:
\begin{align*}
    \lim_{\varepsilon \downarrow 0} \tr \left((\m{G}_{0}^{*} \m{G}_{0} + \m{M}_{0}^{*} \m{M}_{0}) \delta_{\varepsilon}(\m{P}^{1/2}) \right) &= (2-\sqrt{2}) \tr \left((\m{G}_{0}^{*} \m{G}_{0} + \m{M}_{0}^{*} \m{M}_{0}) \m{\Pi}_{\c{N}(\m{P})} \right) \\
    &= (2-\sqrt{2}) \tr (\m{M}_{0} \m{\Pi}_{\c{N}(\m{P})} \m{M}_{0}^{*}).
\end{align*}
Note that this term vanishes if and only if $\m{M}_{0} \m{\Pi}_{\c{N}(\m{P})} = \m{0}$, or equivalently, $\c{N}(\m{P}) \subset \c{N}(\m{M}_{0})$. Noting that
\begin{align*}
    \left(
    \begin{array}{c}
        h \\
        \hline
        g 
    \end{array}
    \right) \in \c{N}(\m{P}) = \left\{ \left(
    \begin{array}{c}
        h \\
        \hline
        g 
    \end{array}
    \right) : h \in \c{N}(\m{P}_{11}), g \in \c{H} \right\} \quad \implies \quad \m{M}_{0} \left(
    \begin{array}{c}
        h \\
        \hline
        g 
    \end{array}
    \right) =
    \left(
    \begin{array}{c}
        \m{0} \\
        \hline
        (\m{B}/\m{A})^{1/2} g 
    \end{array}
    \right),
\end{align*} 
the upper bound converges to $0$ if and only if $\m{B}/\m{A} = \m{0}$.

\item \textbf{Cholesky Decomposition}: Observe that
\begin{align*}
    \left(
    \begin{array}{c|c}
    \m{I} & \m{R}_{\varepsilon} \\
    \hline
    \m{R}_{\varepsilon} & \m{I}
    \end{array}
    \right) = \left(
    \begin{array}{c|c}
    \m{I} & \m{0} \\
    \hline
    \m{R}_{\varepsilon} & \m{\Delta}_{\varepsilon}
    \end{array}
    \right) \left(
    \begin{array}{c|c}
    \m{I} & \m{R}_{\varepsilon} \\
    \hline
    \m{0} & \m{\Delta}_{\varepsilon}
    \end{array}
    \right), \quad \m{\Delta}_{\varepsilon} := (\m{I} - \m{R}_{\varepsilon}^{2})^{1/2},
\end{align*}
hence
\begin{align*}
    \m{S} \left(
    \begin{array}{c|c}
    \m{I} & \m{0} \\
    \hline
    \m{R}_{\varepsilon} & \m{\Delta}_{\varepsilon}
    \end{array}
    \right) = \left(
    \begin{array}{c|c}
    \m{G}_{0} & \m{0} \\
    \hline
    \m{M}_{0} \m{R}_{\varepsilon} & \m{M}_{0} \m{\Delta}_{\varepsilon}
    \end{array}
    \right) \in \c{G}(\m{\Sigma}_{\varepsilon}), \quad \varepsilon \ge 0.
\end{align*}
Thus, we get from \eqref{eq:W2:bdd:HS} that
\begin{align*}
    \c{W}_{2}^{2}(\m{\Sigma}_{\varepsilon}, \m{\Sigma}_{0}) \le \vertiii{\left(
    \begin{array}{c|c}
    \m{0} & \m{0} \\
    \hline
    \m{M}_{0} (\m{R}_{\varepsilon} - \m{R}_{0}) & \m{M}_{0} (\m{\Delta}_{\varepsilon} - \m{\Delta}_{0})
    \end{array}
    \right)}_{2}^{2} = 2 \tr \left[ \m{M}_{0}^{*} \m{M}_{0} \eta_{\varepsilon} (\m{P}^{1/2}) \right].
\end{align*}
For the equality, we used the fact that
\begin{align*}
    2 \eta_{\varepsilon}(x) = (f_{\varepsilon}(x) - f_{0}(x))^{2} + (\sqrt{1 - f_{\varepsilon}(x)^{2}} - \sqrt{1 - f_{0}(x)^{2}})^{2}, \quad x \in [0, \infty), \quad \varepsilon >0.
\end{align*}
Note that $\eta_{\varepsilon}$ converges pointwisely to $0$ for any $x \in [0, \infty)$, thus \cref{lem:pert:ftn:mono,lem:Holder:SOT} conclude that 
\begin{align*}
    \lim_{\varepsilon \downarrow 0} \tr \left[ \m{M}_{0}^{*} \m{M}_{0} \eta_{\varepsilon} (\m{P}^{1/2}) \right] = 0.
\end{align*}
Repeating the argument with the UL decomposition (starting with the bottom-right block) yields the symmetric bound with $\m{G}_{0}$, completing the proof:
\begin{align*}
    \left(
    \begin{array}{c|c}
    \m{I} & \m{R}_{\varepsilon} \\
    \hline
    \m{R}_{\varepsilon} & \m{I}
    \end{array}
    \right) = \left(
    \begin{array}{c|c}
    \m{\Delta}_{\varepsilon} & \m{R}_{\varepsilon} \\
    \hline
    \m{0} & \m{I}
    \end{array}
    \right) \left(
    \begin{array}{c|c}
    \m{\Delta}_{\varepsilon} & \m{0} \\
    \hline
    \m{R}_{\varepsilon} & \m{I}
    \end{array}
    \right).
\end{align*}
\end{enumerate}
\end{proof}

The two bounds in \cref{thm:upper:bdds} offer complementary insights. The upper bound of \eqref{eq:W2:pert:bdd2} converges to $0$ always, while not for \eqref{eq:W2:pert:bdd1}. However, one can easily verify that 
\begin{align*}
    \eta_{\varepsilon}(x) > (1+1/\sqrt{2}) \delta_{\varepsilon}(x) > \delta_{\varepsilon}(x), \quad x, \varepsilon > 0,
\end{align*}
thus $\delta_{\varepsilon}(\m{P}^{1/2}) \prec \eta_{\varepsilon}(\m{P}^{1/2})$. Additionally, \eqref{eq:W2:pert:bdd1} is sharp in the trivial case $\m{A} = \m{B}$.

\begin{conjecture}[Universal Bounds] 
While the asymptotic scaling $\c{W}_{2}^{2}(\pi_{\varepsilon}, \pi_{0}) = \c{O}(\varepsilon)$ is guaranteed in the non-degenerate finite-dimensional setting ($\m{A}, \m{B} \succ \m{0}$ on $\c{H} = \b{R}^{d}$) from \eqref{eq:W2:pert:bdd1}, determining the sharp universal constant governing this rate remains an open problem. Specifically, does there exist a dimension-dependent constant $C_{d} > 0$ such that
\begin{align*} 
    \limsup_{\varepsilon \downarrow 0} \frac{\c{W}_{2}^{2}(\pi_{\varepsilon}, \pi_{0})}{\varepsilon} \le C_{d} \quad \text{for all } \m{A}, \m{B} \succ \m{0}? 
\end{align*}
Analytical results for the identity transport case ($\m{A} = \m{B}$) in \eqref{eq:univ:bdd:same} gives a lower bound of $C_{d} \ge d/2$. However, our numerical experiments with randomly generated full-rank covariances (see \cref{fig:conv:simul}) consistently exhibit a strictly smaller rate, typically concentrating around $\approx 0.36 d$. It remains unclear whether the identity case represents a theoretical worst-case scenario, or if the observed lower rates are simply an artifact of the concentration of measure in random matrix ensembles.
\end{conjecture}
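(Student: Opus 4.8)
The plan is to resolve the conjecture affirmatively and exhibit the sharp constant $C_d = d/2$, attained precisely in the identity case $\m{A} = \m{B}$. I would start from the exact formula of \cref{thm:W2:exact} specialized to the non-degenerate setting $\m{A}, \m{B} \succ \m{0}$ on $\c{H} = \b{R}^{d}$. Here $\m{B}/\m{A} = \m{0}$, so one may take $\m{G}_{0} = \m{A}^{1/2}$ and $\m{M}_{0} = \m{A}^{-1/2} \m{P}^{1/2}$ with $\m{P} = \m{A}^{1/2} \m{B} \m{A}^{1/2} \succ \m{0}$; writing $\m{M} := \m{M}_{0}^{*} \m{M}_{0} = \m{P}^{1/2} \m{A}^{-1} \m{P}^{1/2}$ and $\m{R}_{\varepsilon} = f_{\varepsilon}(\m{G}_0^*\m{M}_0) = f_{\varepsilon}(\m{P}^{1/2})$, the formula reads $\c{W}_{2}^{2}(\pi_{\varepsilon}, \pi_{0}) = 2\tr[\m{A} + \m{B} - \sqrt{\m{Q}_{\varepsilon}}]$ with $\m{Q}_{\varepsilon} = \m{A}^{2} + \m{M}^{2} + \m{A} \m{R}_{\varepsilon} \m{M} + \m{M} \m{R}_{\varepsilon} \m{A}$. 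Since $f_{0} \equiv 1$ on $\sigma(\m{P}^{1/2}) \subset (0,\infty)$, we have $\m{R}_{0} = \m{I}$, hence $\m{Q}_{0} = (\m{A}+\m{M})^{2}$, $\sqrt{\m{Q}_{0}} = \m{A}+\m{M}$, and $\tr[\m{A}+\m{B}-\sqrt{\m{Q}_{0}}] = \tr\m{B} - \tr\m{M} = \tr\m{B} - \tr[\m{A}^{-1}\m{P}] = 0$, recovering $\c{W}_{2}^{2}(\pi_{0},\pi_{0})=0$.

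Next I would extract the first-order term in $\varepsilon$. Writing $\m{R}_{\varepsilon} = \m{I} - \m{E}_{\varepsilon}$ with $\m{E}_{\varepsilon} = \eta_{\varepsilon}(\m{P}^{1/2})$ and using $\eta_{\varepsilon}(x) = \varepsilon/(2x) + \c{O}(\varepsilon^{2})$ uniformly on the compact spectrum (bounded away from $0$), one gets $\m{E}_{\varepsilon} = \tfrac{\varepsilon}{2}\m{P}^{-1/2} + \c{O}(\varepsilon^{2})$ in operator norm. Then $\m{Q}_{\varepsilon} = \m{Q}_{0} - \m{\Delta}_{\varepsilon}$ with $\m{\Delta}_{\varepsilon} = \m{A}\m{E}_{\varepsilon}\m{M} + \m{M}\m{E}_{\varepsilon}\m{A}$, and the elementary identity $\m{A}\m{P}^{-1/2}\m{M} = \m{M}\m{P}^{-1/2}\m{A} = \m{P}^{1/2}$ (immediate from $\m{M} = \m{P}^{1/2}\m{A}^{-1}\m{P}^{1/2}$) collapses this to $\m{\Delta}_{\varepsilon} = \varepsilon \m{P}^{1/2} + \c{O}(\varepsilon^{2})$. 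Since $\m{Q}_{0} \succ \m{0}$, the map $\m{X}\mapsto\tr\sqrt{\m{X}}$ is analytic near $\m{Q}_{0}$, yielding $\tr\sqrt{\m{Q}_{\varepsilon}} = \tr\sqrt{\m{Q}_{0}} - \tfrac12\tr[\m{Q}_{0}^{-1/2}\m{\Delta}_{\varepsilon}] + \c{O}(\vertiii{\m{\Delta}_{\varepsilon}}_{\infty}^{2})$, whence $\lim_{\varepsilon\downarrow 0}\c{W}_{2}^{2}(\pi_{\varepsilon},\pi_{0})/\varepsilon = \tr[(\m{A}+\m{M})^{-1}\m{P}^{1/2}] =: \Phi(\m{A},\m{B})$. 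This turns the $\limsup$ into an honest limit, computed before the supremum over $(\m{A},\m{B})$ is taken.

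The decisive step is a congruence reduction of $\Phi$. Set $\m{Y} := \m{A}^{-1/2}\m{P}^{1/2}\m{A}^{-1/2} = \m{A}^{-1/2}(\m{A}^{1/2}\m{B}\m{A}^{1/2})^{1/2}\m{A}^{-1/2} \succ \m{0}$, which is exactly the Gaussian Monge map from $\mu$ to $\nu$. Then $\m{M} = \m{A}^{1/2}\m{Y}^{2}\m{A}^{1/2}$, $\m{A}+\m{M} = \m{A}^{1/2}(\m{I}+\m{Y}^{2})\m{A}^{1/2}$ and $\m{P}^{1/2} = \m{A}^{1/2}\m{Y}\m{A}^{1/2}$, so cancellation of the conjugating $\m{A}^{1/2}$ factors gives $\Phi(\m{A},\m{B}) = \tr[\m{Y}(\m{I}+\m{Y}^{2})^{-1}] = \sum_{i=1}^{d} y_{i}/(1+y_{i}^{2})$, where $y_{i}>0$ are the eigenvalues of $\m{Y}$. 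Since $y/(1+y^{2}) \le 1/2$ for all $y \ge 0$ with equality iff $y=1$, we conclude $\Phi(\m{A},\m{B}) \le d/2$, with equality iff $\m{Y} = \m{I}$, i.e. iff $\m{B} = \m{A}$. Hence the conjecture holds with the sharp value $C_{d} = d/2$, attained at and only at the identity transport; this also explains the numerically observed $\approx 0.36\,d$ as $d$ times the mean of $y/(1+y^{2})$ against the limiting spectral distribution of the random Monge map, which concentrates away from $1$.

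The only delicate point, and where I expect the bookkeeping to live, is making the first-order expansion rigorous: one must check that the $\c{O}(\varepsilon^{2})$ remainders in $\m{E}_{\varepsilon}$, in $\m{\Delta}_{\varepsilon}$, and in the analytic expansion of $\tr\sqrt{\cdot}$ are genuinely uniform for each fixed non-degenerate pair $(\m{A},\m{B})$. This is routine once all operators are invertible and the spectrum $\sigma(\m{P}^{1/2})$ is compact and bounded away from $0$, but it must be stated carefully so that the exchange of $\limsup_{\varepsilon}$ with the pointwise limit is legitimate. The remaining ingredients — that $\m{R}_{0} = \m{I}$ makes $\m{Q}_{0}$ a perfect square, the cancellation identity for $\m{\Delta}_{\varepsilon}$, and the congruence diagonalizing $\Phi$ against the Monge map — are purely algebraic, and the scalar bound $y/(1+y^{2}) \le 1/2$ finishes the argument.
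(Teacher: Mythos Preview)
The paper states this as an \emph{open conjecture} and provides no proof; it only records the lower bound $C_d \ge d/2$ from the identity case \eqref{eq:univ:bdd:same} together with numerical evidence. Your proposal therefore cannot be compared against a proof in the paper --- rather, it goes strictly beyond the paper by resolving the conjecture affirmatively with the sharp constant $C_d = d/2$.

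Your argument is correct. The key observations are: (i) in the non-degenerate setting $\m{R}_{0} = \m{I}$, so $\m{Q}_{0} = (\m{A} + \m{M}_{0}^{*}\m{M}_{0})^{2}$ is a perfect square and the zeroth-order term vanishes; (ii) the expansion $\eta_{\varepsilon}(x) = \varepsilon/(2x) + \c{O}(\varepsilon^{2})$ is uniform on the compact spectrum $\sigma(\m{P}^{1/2}) \subset (0,\infty)$, and the algebraic identity $\m{A}\m{P}^{-1/2}(\m{M}_{0}^{*}\m{M}_{0}) = (\m{M}_{0}^{*}\m{M}_{0})\m{P}^{-1/2}\m{A} = \m{P}^{1/2}$ collapses the first-order perturbation to $\m{\Delta}_{\varepsilon} = \varepsilon\m{P}^{1/2} + \c{O}(\varepsilon^{2})$; (iii) the trace-derivative formula $\frac{\rd}{\rd t}\tr\sqrt{X+tH}\big|_{t=0} = \tfrac{1}{2}\tr[X^{-1/2}H]$ (valid for any symmetric $H$ by the Daleckii--Krein formula or first-order perturbation of eigenvalues) yields $\lim_{\varepsilon\downarrow 0}\c{W}_{2}^{2}(\pi_{\varepsilon},\pi_{0})/\varepsilon = \tr[(\m{A}+\m{M}_{0}^{*}\m{M}_{0})^{-1}\m{P}^{1/2}]$; (iv) the congruence $\m{Y} = \m{A}^{-1/2}\m{P}^{1/2}\m{A}^{-1/2}$ (the Gaussian Monge map) reduces this to $\sum_{i} y_{i}/(1+y_{i}^{2}) \le d/2$, with equality iff $\m{Y}=\m{I}$, i.e.\ $\m{A}=\m{B}$. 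All four steps check out; the second-order remainders are controlled because $\m{Q}_{0} \succ \m{0}$ and the spectrum of $\m{P}^{1/2}$ is bounded away from zero for each fixed pair.

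This is a genuine contribution the paper does not contain. One minor stylistic point: you overload the symbol $\m{M}$ for $\m{M}_{0}^{*}\m{M}_{0}$, which clashes with the paper's convention that $\m{M}$ denotes a Green's operator for $\m{B}$; renaming it (say $\m{N}$) would avoid confusion.
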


\begin{figure}[h!]
    \centering
    \begin{subfigure}[b]{0.45\textwidth}
     \centering
     \includegraphics[width=\textwidth]{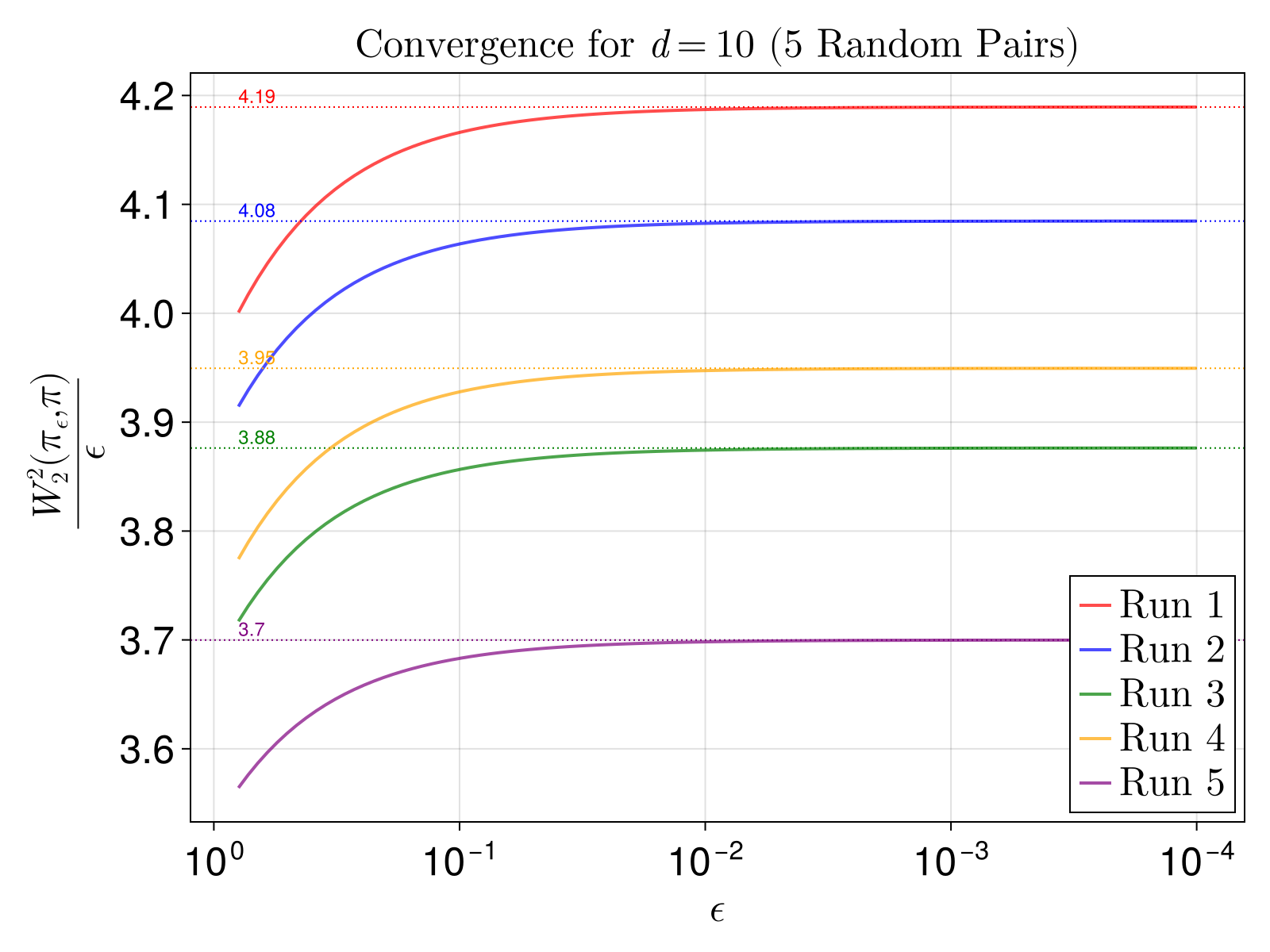}
    \end{subfigure}
    \hfill
    \begin{subfigure}[b]{0.45\textwidth}
     \centering
     \includegraphics[width=\textwidth]{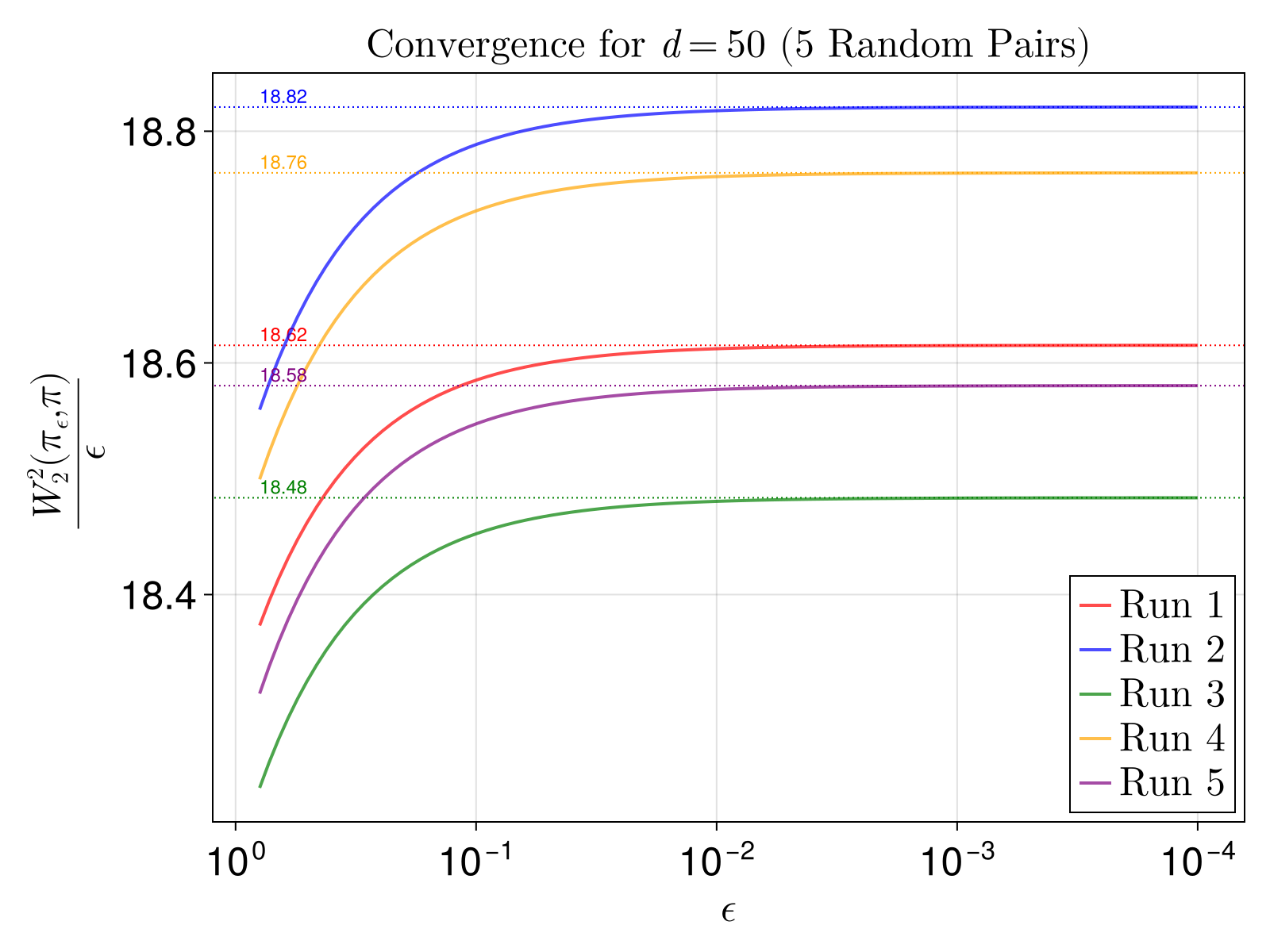}
    \end{subfigure}
    \hfill
    \begin{subfigure}[b]{0.45\textwidth}
     \centering
     \includegraphics[width=\textwidth]{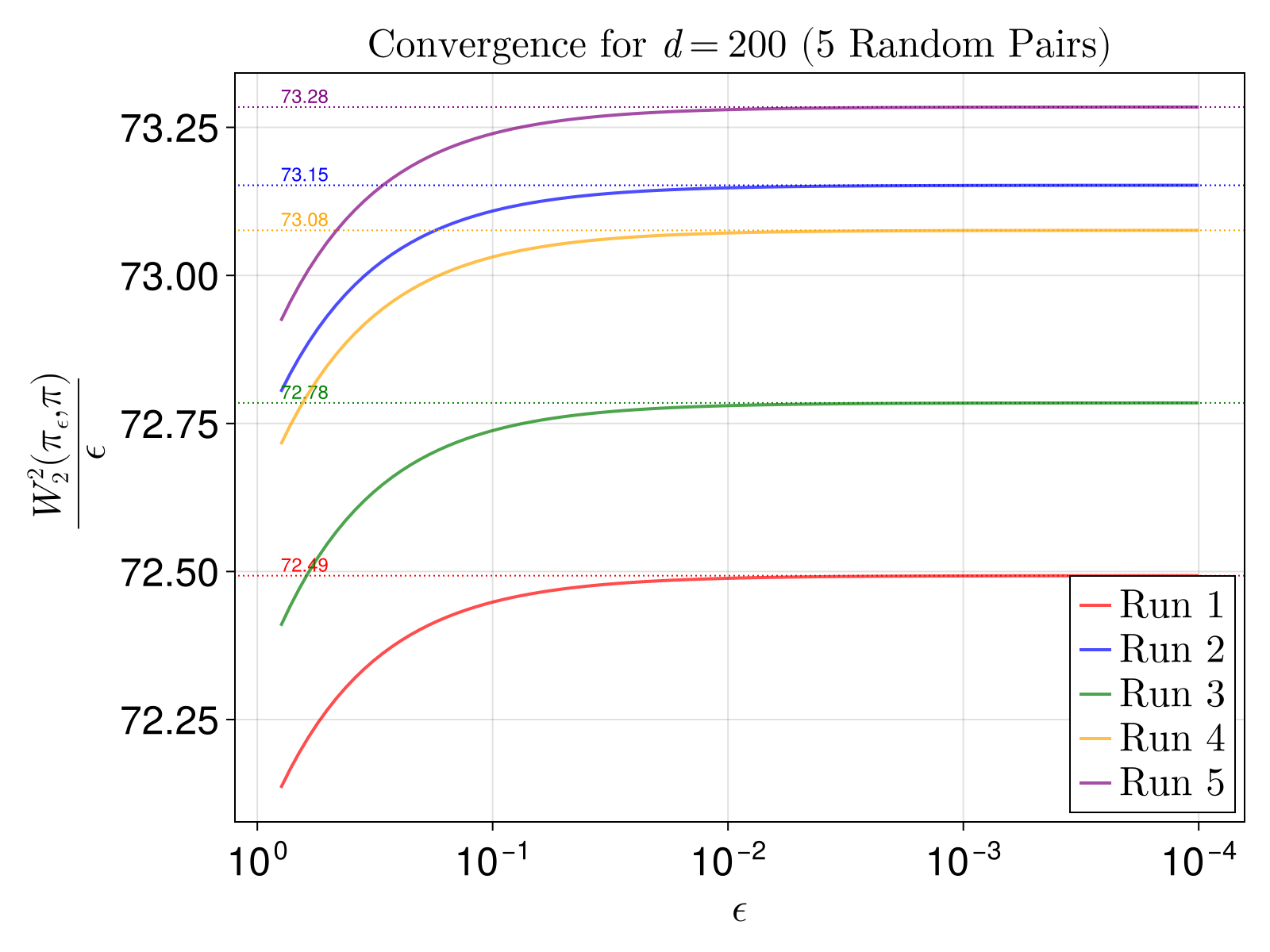}
    \end{subfigure}
    \hfill
    \begin{subfigure}[b]{0.45\textwidth}
     \centering
     \includegraphics[width=\textwidth]{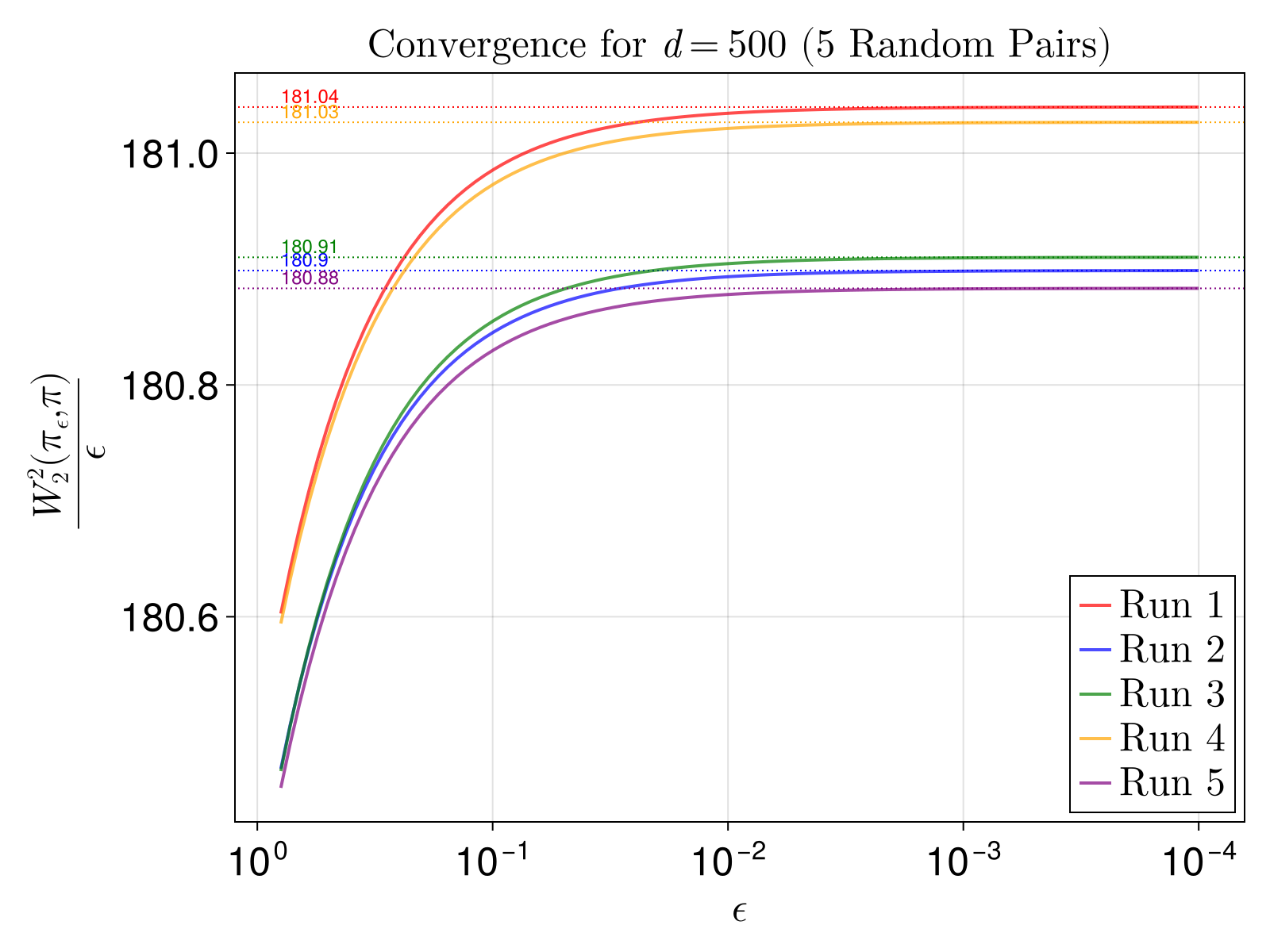}
    \end{subfigure}
    \caption{The ratio $\mathcal{W}_{2}^{2}(\pi_{\varepsilon}, \pi)/\varepsilon$ as a function of the regularization parameter $\varepsilon$ (on a reverse logarithmic scale) for dimensions $d \in \{10, 50, 200, 500\}$. For each dimension, we perform $5$ independent trials with randomly generated full rank covariance matrices $\m{A}, \m{B} \in \b{R}^{d \times d}$. The limiting rate appears to scale linearly with dimension, concentrating around $\approx 0.36d$ for large $d$, which is strictly lower than the theoretical baseline of $d/2$ for identity transport.}
    \label{fig:conv:simul}
\end{figure}

\section{Discussion}
This work establishes an operator-theoretic framework for Gaussian Entropic Optimal Transport, moving beyond the opacity of standard matrix calculus to reveal the problem's intrinsic geometry. By characterizing the solution through properly aligned Green's operators, we demonstrated that the Gaussian EOT problem admits a fully explicit spectral resolution. A notable theoretical consequence of this structure is the gain in regularity inherent to the optimal solution. While the entropic cost remains finite for any Hilbert-Schmidt correlation operator with operator norm strictly less than one, the optimal correlation $\m{R}_{\varepsilon} \in \c{B}_{1}^{+}(\c{H})$ is proven to be a n.n.d. trace-class operator satisfying the explicit bound $\vertii{\m{R}_{\varepsilon}}_1 \le \frac{2}{\varepsilon} \tr[(\m{A}^{1/2} \m{B} \m{A}^{1/2})^{1/2}]$. Our stability analysis also bridges the gap between finite-dimensional and infinite-dimensional theory. While we recover standard parametric rates for finite-rank operators, our derivation of the non-parametric rate for processes like Integrated Brownian Motion quantifies the price of roughness. This result highlights that in the Hilbert space setting, the convergence of the entropic approximation is strictly governed by the spectral decay of the marginals, serving as a cautionary tale against applying finite-dimensional convergence intuitions to functional data.

\begin{acks}[Acknowledgments]
The author is grateful to Victor Panaretos and Paul Freulon for stimulating discussions and valuable insights on the problem.
\end{acks}

\bibliographystyle{imsart-number}
\bibliography{bibliography} 

\newpage

\begin{appendix}
\section{Lemmas}

\begin{lemma}\label{lem:Holder:SOT}
Let $\m{G}, \m{M} \in \c{B}_{2}(\c{H})$. If $\{\m{T}_{n}\}_{n}$ is a sequence of bounded operators such that $\m{T}_{n} \to \m{T}$ in the SOT, then the product converges in the trace norm:
\begin{align*}
    \m{G} \m{T}_{n} \m{M}^{*} \to \m{G} \m{T} \m{M}^{*} \quad \text{in} \quad \c{B}_{1}(\c{H}).
\end{align*}
\end{lemma}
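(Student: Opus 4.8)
The plan is to reduce the trace-norm statement to a Hilbert--Schmidt statement, and then exploit that $\m{M}^{*}$ is Hilbert--Schmidt (hence has square-summable singular values) together with a uniform bound furnished by the Banach--Steinhaus theorem. First I would set $\m{S}_{n} := \m{T}_{n} - \m{T}$, which converges to $\m{0}$ in the SOT, so it suffices to show $\vertiii{\m{G} \m{S}_{n} \m{M}^{*}}_{1} \to 0$. Since $\m{S}_{n} x$ converges for every $x \in \c{H}$, the orbit $\{\m{S}_{n} x\}_{n}$ is bounded for each $x$, so by the uniform boundedness principle $C := \sup_{n} \vertiii{\m{S}_{n}}_{\infty} < \infty$ (in particular $\m{T}$ is bounded, so the hypothesis is not vacuous).

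Next I would invoke submultiplicativity of the Schatten norms: since $\m{G} \in \c{B}_{2}(\c{H})$ and $\m{S}_{n} \m{M}^{*} \in \c{B}_{2}(\c{H})$, the product lies in $\c{B}_{1}(\c{H})$ with $\vertiii{\m{G} \m{S}_{n} \m{M}^{*}}_{1} \le \vertiii{\m{G}}_{2} \, \vertiii{\m{S}_{n} \m{M}^{*}}_{2}$. This reduces the claim to proving $\vertiii{\m{S}_{n} \m{M}^{*}}_{2} \to 0$. For that, write the singular value decomposition $\m{M}^{*} = \sum_{j} s_{j} \innpr{\cdot}{\psi_{j}} \phi_{j}$ with orthonormal systems $\{\psi_{j}\}$, $\{\phi_{j}\}$ and $\sum_{j} s_{j}^{2} = \vertiii{\m{M}}_{2}^{2} < \infty$. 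Completing $\{\psi_{j}\}$ to an orthonormal basis of $\c{H}$ and evaluating the Hilbert--Schmidt norm in that basis yields the exact identity $\vertiii{\m{S}_{n} \m{M}^{*}}_{2}^{2} = \sum_{j} s_{j}^{2} \, \|\m{S}_{n} \phi_{j}\|^{2}$. Each summand tends to $0$ as $n \to \infty$ because $\m{S}_{n} \to \m{0}$ in the SOT, and is dominated uniformly in $n$ by $C^{2} s_{j}^{2}$, a summable majorant; the dominated convergence theorem for series then gives $\vertiii{\m{S}_{n} \m{M}^{*}}_{2}^{2} \to 0$, which combined with the previous estimate completes the proof.

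The only subtlety — and the reason this is not a one-line estimate — is that SOT convergence does \emph{not} imply $\vertiii{\m{S}_{n}}_{\infty} \to 0$, so the naive bound $\vertiii{\m{G} \m{S}_{n} \m{M}^{*}}_{1} \le \vertiii{\m{G}}_{2} \vertiii{\m{S}_{n}}_{\infty} \vertiii{\m{M}}_{2}$ is useless; the essential input is the compactness of $\m{M}^{*}$, in the quantitative form of square-summable singular values, which is precisely what turns the problem into a dominated-convergence argument. An equivalent route is to truncate $\m{M}^{*} = \m{M}_{0}^{*} + \m{F}^{*}$ into a finite-rank part plus a Hilbert--Schmidt remainder of small norm, handling the finite-rank term against the finitely many vectors $\phi_{j}$ and absorbing the remainder with the uniform bound $C$; I would present the SVD version, as it avoids the $\varepsilon$-bookkeeping.
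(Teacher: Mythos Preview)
Your proof is correct. Both you and the paper start with the same reduction (set $\m{S}_n = \m{T}_n - \m{T}$ and invoke the uniform boundedness principle to get $C = \sup_n \vertiii{\m{S}_n}_\infty < \infty$), but the arguments then diverge in packaging. The paper approximates \emph{both} $\m{G}$ and $\m{M}$ by finite-rank operators $\m{G}_F, \m{M}_F$ within $\delta$ in Hilbert--Schmidt norm, splits $\m{G}\m{S}_n\m{M}^*$ into three pieces via the triangle inequality and H\"older, and shows the finite-rank piece $\m{G}_F\m{S}_n\m{M}_F^*$ vanishes because it is a finite sum of rank-one operators scaled by inner products that tend to zero under SOT convergence; the other two pieces are bounded by $C\delta$ times fixed constants, and one lets $\delta \to 0$. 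Your route instead applies H\"older once to factor out $\vertiii{\m{G}}_2$ and reduce to $\vertiii{\m{S}_n\m{M}^*}_2 \to 0$, which you then obtain cleanly from the SVD of $\m{M}^*$ and the dominated convergence theorem for series. The two are essentially the same idea---compactness of $\m{M}^*$ is what rescues the argument from the failure of $\vertiii{\m{S}_n}_\infty \to 0$---but your version is a bit more economical: it avoids the $\delta$-bookkeeping and needs the structure of only one of the two Hilbert--Schmidt factors. The paper's finite-rank truncation is the ``equivalent route'' you mention in your final paragraph, applied symmetrically to both factors.
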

\begin{proof}
Let $\m{\Delta}_{n} := \m{T}_{n} - \m{T}$. Since $\m{\Delta}_{n} \xrightarrow{SOT} \m{0}$, the sequence of pointwise norms $\|\m{\Delta}_n x\|$ is bounded for every $x \in \c{H}$. By the Uniform Boundedness Principle, the sequence is uniformly bounded in the operator norm: $C := \sup_n \vertiii{\m{\Delta}_n}_{\infty} < \infty$.
We must show $\vertii{\m{G} \m{\Delta}_{n} \m{M}^{*}}_{1} \to 0$. Since finite-rank operators are dense in $\c{B}_2(\c{H})$ with respect to $\vertii{\cdot}_2$, for any $\delta > 0$, there exist finite-rank operators $\m{G}_F, \m{M}_F$ such that $\vertii{\m{G} - \m{G}_F}_{2} < \delta$ and $\vertii{\m{M} - \m{M}_F}_{2} < \delta$.
We decompose the error term and apply the generalized Hölder inequality $\vertii{\m{A}\m{B}\m{C}}_{1} \le \vertii{\m{A}}_{2} \vertiii{\m{B}}_{\infty} \vertii{\m{C}}_{2}$:
\begin{align*}
    \vertii{\m{G} \m{\Delta}_{n} \m{M}^{*}}_{1} &\le \vertii{(\m{G} - \m{G}_F) \m{\Delta}_{n} \m{M}^{*}}_{1} + \vertii{\m{G}_F \m{\Delta}_{n} (\m{M}^{*} - \m{M}_F^{*})}_{1} + \vertii{\m{G}_F \m{\Delta}_{n} \m{M}_F^{*}}_{1} \\
    &\le \vertii{\m{G} - \m{G}_F}_{2} \vertiii{\m{\Delta}_n}_{\infty} \vertii{\m{M}}_{2} + \vertii{\m{G}_F}_{2} \vertiii{\m{\Delta}_n}_{\infty} \vertii{\m{M} - \m{M}_F}_{2} + \vertii{\m{G}_F \m{\Delta}_{n} \m{M}_F^{*}}_{1} \\
    &\le \delta \cdot C \cdot \vertii{\m{M}}_{2} + \vertii{\m{G}_F}_{2} \cdot C \cdot \delta + \vertii{\m{G}_F \m{\Delta}_{n} \m{M}_F^{*}}_{1}.
\end{align*}
The last term $\vertii{\m{G}_F \m{\Delta}_{n} \m{M}_F^{*}}_{1}$ involves a finite sum of rank-one operators. Specifically, writing the canonical form $\m{G}_F = \sum_{i=1}^k u_i \otimes v_i$ and $\m{M}_F = \sum_{j=1}^l a_j \otimes b_j$, the term is a sum of operators scaled by inner products $\innpr{\m{\Delta}_n b_j}{v_i}$. Since $\m{\Delta}_n \xrightarrow{SOT} \m{0}$, these scalar coefficients vanish as $n \to \infty$. Thus:
\begin{align*}
    \limsup_{n \to \infty} \vertii{\m{G} \m{\Delta}_{n} \m{M}^{*}}_{1} \le \delta C (\vertii{\m{M}}_{2} + \vertii{\m{G}_F}_{2}).
\end{align*}
Since $\delta > 0$ is arbitrary, the limit is zero.
\end{proof}

\begin{lemma}\label{lem:conv:ftn:mono}
Consider the function in \cref{cor:conv:dist}:
\begin{align*}
    g_{1}(x) = \left[ 2 x (1-f_{1}(x)) - \log (1- f_{1}(x)^{2}) \right], \quad f_{1}(x) = \sqrt{1 + \left(\frac{1}{2x}\right)^2} - \frac{1}{2x}, \quad x > 0.
\end{align*}
Then, $g_{1}$ is monotonely increasing and we have
\begin{align*}
    \lim_{x \downarrow 0} \frac{g_{1}(x)}{2x} = 1, \quad \lim_{x \to \infty} (g_{1}(x) - \log x) = 1 - \log 2.
\end{align*}
\end{lemma}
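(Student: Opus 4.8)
The proof turns on one algebraic fact about $f_1$. Writing $a=a(x):=\tfrac{1}{2x}$ so that $f_1(x)=\sqrt{1+a^2}-a$, squaring the relation $f_1(x)+a=\sqrt{1+a^2}$ gives $f_1(x)^2+2af_1(x)-1=0$, i.e. the exact identity
\[
  1-f_1(x)^2=\frac{f_1(x)}{x},\qquad x>0 .
\]
This is essentially the only nontrivial input; everything else is bookkeeping. In particular it records that $f_1$ is a smooth strictly increasing bijection of $(0,\infty)$ onto $(0,1)$ with $f_1(0^{+})=0$, $f_1(x)\uparrow 1$ as $x\to\infty$, and $f_1(x)/x=1-f_1(x)^2\to1$ as $x\downarrow0$; and, using it to rewrite $\log(1-f_1(x)^2)=\log f_1(x)-\log x$, it puts $g_1$ in the form $g_1(x)=2x\bigl(1-f_1(x)\bigr)-\log f_1(x)+\log x$.

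For monotonicity I would differentiate $g_1$ directly and let the identity annihilate the $f_1'$–terms. Indeed $\tfrac{d}{dx}\bigl[2x(1-f_1)\bigr]=2(1-f_1)-2xf_1'$, while $\tfrac{d}{dx}\bigl[-\log(1-f_1^2)\bigr]=\tfrac{2f_1f_1'}{1-f_1^2}$, which by the identity equals $2xf_1'$; the two $f_1'$–contributions cancel and we are left with
\[
  g_1'(x)=2\bigl(1-f_1(x)\bigr)>0 \qquad\text{since } 0<f_1(x)<1 ,
\]
so $g_1$ is strictly increasing. (A parallel route is the substitution $s=f_1(x)$, $x=s/(1-s^2)$, under which $g_1$ collapses to $\tfrac{2s}{1+s}-\log(1-s^2)$ on $s\in(0,1)$, with derivative $\tfrac{2}{(1+s)^2}+\tfrac{2s}{1-s^2}$ visibly positive; the direct computation is shorter.)

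Both limits then fall out of the formula $g_1'(x)=2(1-f_1(x))$ together with the boundary behaviour of $f_1$. Near $x\downarrow0$: since $g_1(0^{+})=0$ (both $2x(1-f_1)$ and $-\log(1-f_1^2)$ vanish there), $g_1(x)=\int_0^x 2\bigl(1-f_1(t)\bigr)\,dt$, so $g_1(x)/(2x)$ is the running average of $1-f_1$ on $[0,x]$ and converges to $1-f_1(0^{+})=1$. Near $x\to\infty$: from $g_1(x)-\log x=2x(1-f_1(x))-\log f_1(x)$ and the closed form $2x(1-f_1(x))=2x-\sqrt{4x^2+1}+1=1-\bigl(2x+\sqrt{4x^2+1}\bigr)^{-1}\to1$ together with $\log f_1(x)\to\log1=0$, one gets $g_1(x)-\log x\to 1$, i.e. $g_1(x)=\log x+1+o(1)$ — the expansion invoked in \cref{cor:conv:dist} (the constant becomes $1-\log 2$ exactly when one measures the remainder against $\log(2x)$ rather than $\log x$).

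The only real obstacle is recognition: once the identity $1-f_1(x)^2=f_1(x)/x$ is in hand, the monotonicity reduces to the one–line $g_1'=2(1-f_1)$ and the $x\to\infty$ statement stops being an $\infty-\infty$ indeterminacy, so no genuine analytic difficulty remains. Without it, differentiating $g_1$ in $x$ leaves a rational combination of $f_1$ and $f_1'$ whose sign is opaque, and the logarithmic asymptotics are much less transparent.
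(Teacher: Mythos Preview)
Your proof is correct and rests on the same pivotal identity $1-f_1(x)^2=f_1(x)/x$ that the paper isolates, but your route for monotonicity is genuinely cleaner: you differentiate $g_1$ directly and let the identity collapse the $f_1'$--terms to obtain the one-line $g_1'(x)=2(1-f_1(x))>0$, whereas the paper reparametrizes by $y=f_1(x)$ (your ``parallel route'') and checks positivity of the resulting derivative in $y$. Your formula $g_1'=2(1-f_1)$ also pays a dividend at the $x\downarrow 0$ endpoint, where the running-average argument replaces the paper's direct substitution. Finally, you are right that the actual limit is $\lim_{x\to\infty}(g_1(x)-\log x)=1$, not $1-\log 2$: the paper's own proof concludes with $1$, and its downstream application in \cref{cor:conv:dist} uses $g_1(y)=\log y+1+o(1)$, so the constant $1-\log 2$ in the lemma's statement is a typo (it would be correct against $\log(2x)$, exactly as you diagnose).
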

\begin{proof}
\begin{enumerate}[leftmargin = *]
\item It is convenient to express $g_1$ in terms of $f_1$. We use the identity $1 - f_{1}(x)^2 = \frac{f_1(x)}{x}$. 
Let $y = f_1(x)$. Since $x \in (0, \infty)\mapsto f_1(x) \in (0, 1)$ is strictly decreasing, it suffices to show $g_1$ increases with $y$. Substituting $x = \frac{y}{1-y^2}$ into the expression for $g_1$:
\begin{align*}
    \tilde{g_1}(y) = g_{1}(x) = 2 \left( \frac{y}{1-y^2} \right) (1-y) - \log(1-y^2) = \frac{2y}{1+y} - \log(1-y^2).
\end{align*}
Differentiating with respect to $y$:
\begin{align*}
    \frac{\rd \tilde{g_1}}{\rd y} = \frac{2(1+y) - 2y}{(1+y)^2} - \frac{1}{1-y^2}(-2y) = \frac{2}{(1+y)^2} + \frac{2y}{(1-y)(1+y)} > 0,
\end{align*}
i.e., $g_1(x)$ is monotonically increasing.

\item For the limits, first observe that:
\begin{align*}
    \lim_{x \downarrow 0} \frac{g_1(x)}{2x} = \lim_{x \downarrow 0} \left( (1-f_1(x)) - \frac{\log(1-f_1(x)^2)}{2x} \right) = 1 - \lim_{x \downarrow 0} \frac{-f_1(x)^2}{2x} = 1 - 0 = 1.
\end{align*}
For the other limit, let $u = 1/(2x)$. As $x \to \infty$, $u \to 0$. We can rewrite $f_1(x)$ as:
\begin{align*}
    f_{1}(x) = \frac{1/u}{\sqrt{1/u^2+1}+1} = \frac{1}{\sqrt{1+u^2}+u} = \sqrt{1+u^2}-u.
\end{align*}
For small $u$, we have the expansion $f_1 \approx (1 + u^2/2) - u = 1 - u + O(u^2)$.
This implies $1 - f_1 \approx u$.
Now substitute into $g_1(x)$:
\begin{align*}
    g_{1}(x) &= \frac{1}{u}(1-f_1) - \log(1-f_1^2).
\end{align*}
Using $1-f_1^2 = (1-f_1)(1+f_1) \approx u(2-u) \approx 2u$, we get:
\begin{align*}
    g_{1}(x) &\approx \frac{1}{u}(u) - \log(2u) = 1 - (\log 2 + \log u) = 1 - \log 2 - \log(1/(2x)) = 1 + \log x.
\end{align*}
Thus, $\lim_{x \to \infty} (g_1(x) - \log x) = 1$.
\end{enumerate}
\end{proof}

\begin{lemma}\label{lem:pert:ftn:mono}
Consider the perturbation function in \eqref{eq:def:pert:ftn}. Then, the following holds:
\begin{enumerate}[leftmargin = *]
\item For any $\varepsilon > 0$, $\delta_{\varepsilon} : [0, \infty) \to [0, 2-\sqrt{2})$ is monotonely decreasing in $x$ with $\lim_{x \downarrow 0} \delta_{\varepsilon}(x) = 2 -\sqrt{2}$. For fixed $x > 0$, $\delta_{\varepsilon}(x) \downarrow 0$ as $\varepsilon \downarrow 0$.
\item For any $\varepsilon, x > 0$, we have the scaling property:
\begin{align*}
    \frac{x \delta_{\varepsilon}(x)}{\varepsilon} = \psi \left( \frac{\varepsilon}{2x} \right), 
\end{align*}
where
\begin{align*}
    \psi:(0, \infty) \to \left[0, \frac{1}{4} \right], \quad \psi(u) := 
        \frac{1}{2u} \left( 2 - \sqrt{2\left(1 + \sqrt{1+u^2}-u \right)} \right) , \quad u > 0,
\end{align*}
is a monotonely decreasing function. The limiting behavior of $\psi$ is given by
\begin{align*}
    \lim_{u \downarrow 0} \psi(u) = \frac{1}{4}, \quad \lim_{u \uparrow \infty} \psi(u) = 0.
\end{align*}
\end{enumerate}
\end{lemma}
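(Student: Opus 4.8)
The plan is to reduce both parts to properties of the single variable function $f_{1}$, using the scaling identity $f_{\varepsilon}(x) = f_{1}(x/\varepsilon)$, which follows by dividing the numerator and denominator of $f_{\varepsilon}(x) = 2x/(\sqrt{4x^{2}+\varepsilon^{2}}+\varepsilon)$ by $\varepsilon$. Rationalizing gives the equivalent form $f_{1}(s) = \sqrt{1+(2s)^{-2}} - (2s)^{-1}$ already recorded in \cref{lem:conv:ftn:mono}. Setting $s = x/\varepsilon$ and $u = \varepsilon/(2x) = 1/(2s)$, this yields $f_{\varepsilon}(x) = \sqrt{1+u^{2}}-u$, hence $\delta_{\varepsilon}(x) = 2 - \sqrt{2(1+\sqrt{1+u^{2}}-u)}$ and
\[
\frac{x\,\delta_{\varepsilon}(x)}{\varepsilon} = \frac{1}{2u}\,\delta_{\varepsilon}(x) = \frac{1}{2u}\Bigl(2 - \sqrt{2(1+\sqrt{1+u^{2}}-u)}\Bigr) = \psi(u),
\]
which is exactly the scaling identity claimed in part (2); equivalently $\psi(u) = s\,\delta_{1}(s)$ with $s = 1/(2u)$.

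For part (1), I first record that $f_{1}$ is a strictly increasing bijection of $(0,\infty)$ onto $(0,1)$ with $f_{1}(0^{+})=0$ and $f_{1}(\infty)=1$: writing $t = (2s)^{-1}$ and $f_{1} = \sqrt{1+t^{2}}-t$, one has $\tfrac{d}{dt}(\sqrt{1+t^{2}}-t) = t/\sqrt{1+t^{2}}-1 < 0$, so $f_{1}$ is decreasing in $t$, hence increasing in $s$. Consequently $x \mapsto f_{\varepsilon}(x) = f_{1}(x/\varepsilon)$ is increasing for fixed $\varepsilon > 0$, so $x \mapsto \delta_{\varepsilon}(x) = 2 - \sqrt{2(1+f_{\varepsilon}(x))}$ is strictly decreasing on $(0,\infty)$, with $\delta_{\varepsilon}(x) \to 2 - \sqrt{2}$ as $x\downarrow 0$ and $\delta_{\varepsilon}(x) \to 2 - \sqrt{4} = 0$ as $x\to\infty$; together with $\delta_{\varepsilon}(0)=0$ this pins down the range $[0, 2-\sqrt{2})$. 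For fixed $x > 0$, decreasing $\varepsilon$ increases $x/\varepsilon$, hence increases $f_{\varepsilon}(x) \uparrow 1$, hence decreases $\delta_{\varepsilon}(x) \downarrow 2 - \sqrt{4} = 0$.

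The substantive step is the monotonicity of $\psi$ in part (2), which I handle by an algebraic simplification. Rationalizing the surd, $2 - \sqrt{2(1+y)} = \sqrt{2}\,(\sqrt{2} - \sqrt{1+y}) = \sqrt{2}\,(1-y)/(\sqrt{2} + \sqrt{1+y})$; combining this with the parametrization $s = y/(1-y^{2})$, $y = f_{1}(s) \in (0,1)$ (from the proof of \cref{lem:conv:ftn:mono}, which uses $1-f_{1}(s)^{2} = f_{1}(s)/s$), the factor $(1-y)$ cancels and
\[
\psi(u) = s\,\delta_{1}(s) = \frac{y}{(1-y)(1+y)}\cdot\frac{\sqrt{2}\,(1-y)}{\sqrt{2}+\sqrt{1+y}} = \frac{\sqrt{2}\,y}{(1+y)\bigl(\sqrt{2}+\sqrt{1+y}\bigr)} =: \Phi(y), \qquad y = f_{1}\!\Bigl(\tfrac{1}{2u}\Bigr).
\]
Putting $w = \sqrt{1+y} \in (1,\sqrt{2})$ gives $\Phi = \sqrt{2}\,(w^{2}-1)/(w^{3}+\sqrt{2}\,w^{2})$, and a direct differentiation shows that the sign of $\Phi'(w)$ is that of $-w^{3}+3w+2\sqrt{2}$; the latter is positive on $(1,\sqrt{2})$ since $p(w) := w^{3} - 3w - 2\sqrt{2}$ satisfies $p'(w) = 3(w^{2}-1) > 0$ there and $p(\sqrt{2}) = -3\sqrt{2} < 0$. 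Thus $\Phi$ is strictly increasing in $w$, hence in $y$; since $u\mapsto f_{1}(1/(2u))$ is strictly decreasing (the decreasing $u\mapsto 1/(2u)$ composed with the increasing $f_{1}$), $\psi(u) = \Phi(f_{1}(1/(2u)))$ is strictly decreasing. The limits follow from $\Phi(0^{+})=0$ and $\Phi(1) = \sqrt{2}/(2\cdot 2\sqrt{2}) = 1/4$: as $u\downarrow 0$ we get $y\uparrow 1$ and $\psi(u)\to 1/4$; as $u\uparrow\infty$ we get $y\downarrow 0$ and $\psi(u)\to 0$.

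The main obstacle is exactly the second display: spotting the rationalization that collapses $s\,\delta_{1}(s)$ into a rational function of $w=\sqrt{1+y}$, after which the monotonicity reduces to the elementary cubic inequality $w^{3} - 3w - 2\sqrt{2} < 0$ on $(1,\sqrt{2})$. Everything else — the scaling reduction, the sign of $f_{1}'$, and the boundary limits — is routine single-variable calculus.
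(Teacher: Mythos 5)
Your proof is correct and follows essentially the same route as the paper's: your chain of substitutions $s = 1/(2u)$, $y = f_1(s)$, $w = \sqrt{1+y}$ collapses, after unwinding, to the paper's $v = \sqrt{1+u^2}-u$, $y = \sqrt{1+v}$, and both arrive at the identical rational function $\sqrt{2}(w^2-1)/(w^2(\sqrt{2}+w))$ and the same cubic inequality $w^3 - 3w - 2\sqrt{2} < 0$ on $(1,\sqrt{2})$. The only cosmetic differences are that you differentiate by the quotient rule where the paper takes a logarithmic derivative, and you read off $\lim_{u\downarrow 0}\psi(u) = \Phi(\sqrt{2}) = 1/4$ from the closed form where the paper uses a second-order Taylor expansion of the surd.
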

\begin{proof}
\begin{enumerate}[leftmargin = *]
\item This is trivial since the spectral shrinkage function
\begin{align*}
    f_{\varepsilon}(x) = \frac{2x}{\sqrt{4x^{2}+\varepsilon^{2}}+\varepsilon}= \frac{1}{\sqrt{1 + \left(\frac{\varepsilon}{2x}\right)^2} + \frac{\varepsilon}{2x}}, \quad x>0
\end{align*}
is monotonely increasing in $x > 0$ and monotonely decreasing in $\varepsilon > 0$.

\item Let $u := \frac{\varepsilon}{2x}$. We rewrite the scaled function solely in terms of $u$:
\begin{align*}
    \frac{x \delta_{\varepsilon}(x)}{\varepsilon} = \frac{1}{2u} \left( 2 - \sqrt{2\left(1 + \sqrt{1+u^2}-u \right)} \right) =: \psi(u).
\end{align*}
For the continuity of $\psi(u)$, observe that
\begin{align*}
    \sqrt{2\left(1 + \sqrt{1+u^2}-u \right)} = \sqrt{4 - 2u + \c{O}(u^{2})} = 2 - \frac{u}{2} + \c{O}(u^{2}), 
\end{align*}
thus
\begin{align*}
    \lim_{u \downarrow 0} \psi(u) = \lim_{u \downarrow 0} \frac{\frac{u}{2} + \c{O}(u^{2})}{2u} = \frac{1}{4}.
\end{align*}
Then, $\lim_{u \uparrow \infty} \psi(u) = 0$ is trivial.
To establish the monotonicity, we use the change of variable $v = \sqrt{1+u^2}-u \in (0, 1)$, which is monotnonely decreasing in $u$. Substituting $v$ into $\psi(u)$:
\begin{align*} 
    \psi(u) = \frac{v}{1-v^2} \left( 2 - \sqrt{2(1+v)} \right). 
\end{align*}
By applying the change of variable $y = \sqrt{1+v} \in (1, \sqrt{2})$ once again, it remains to show that
\begin{align*}
    \tilde{\psi}(y) = \psi(u) = \frac{(y^{2}-1)}{y^2(2-y^2)} \left( 2 - \sqrt{2} y \right) = \frac{\sqrt{2} (y^{2}-1)}{y^2(\sqrt{2}+y)}  
\end{align*}
is monotonely increasing in $y$. By taking the logarithmic derivative with respect to $y$:
\begin{align*}
    \frac{\rd}{\rd y} \log \tilde{\psi}(y) = \frac{2y}{y^2-1} - \frac{2}{y} - \frac{1}{\sqrt{2}+y} = \frac{2}{y (y^2-1)} - \frac{1}{\sqrt{2}+y}
\end{align*}
the condition becomes: 
\begin{align*} 
    2(\sqrt{2}+y) > y (y^2-1) \iff h(y):= y^3 - 3y - 2\sqrt{2} < 0 \quad \text{on} \quad (1, \sqrt{2}). 
\end{align*}
This condition holds since $h(y)$ is increasing in $y \in (1, \sqrt{2})$ and $h(\sqrt{2}) = -3 \sqrt{2} < 0$.
\end{enumerate}
\end{proof}

\end{appendix}
\end{document}